\documentclass[11pt,reqno]{amsart}

\usepackage[T1]{fontenc}
\usepackage[utf8]{inputenc}
\usepackage[margin=1.05in]{geometry}
\usepackage{amsmath,amssymb,amsthm,mathtools}
\usepackage[expansion=false]{microtype}
\usepackage{booktabs}
\usepackage{enumitem}
\usepackage{tikz}
\usetikzlibrary{arrows.meta,positioning,calc,cd,decorations.markings}
\usepackage{pgfplots}
\pgfplotsset{compat=1.18}
\usepackage[colorlinks=true,linkcolor=blue!55!black,citecolor=green!40!black,
            urlcolor=blue!55!black]{hyperref}

\theoremstyle{plain}
\newtheorem{theorem}{Theorem}[section]
\newtheorem{proposition}[theorem]{Proposition}
\newtheorem{lemma}[theorem]{Lemma}
\newtheorem{corollary}[theorem]{Corollary}
\theoremstyle{definition}
\newtheorem{definition}[theorem]{Definition}
\newtheorem{example}[theorem]{Example}
\newtheorem{problem}[theorem]{Problem}
\theoremstyle{remark}
\newtheorem{remark}[theorem]{Remark}

\newcommand{\Pp}{\mathbb{P}}
\newcommand{\C}{\mathbb{C}}
\newcommand{\R}{\mathbb{R}}
\newcommand{\Q}{\mathbb{Q}}
\newcommand{\Z}{\mathbb{Z}}
\newcommand{\Qbar}{\overline{\Q}}

\newcommand{\Gal}{\operatorname{Gal}}
\newcommand{\Aut}{\operatorname{Aut}}
\newcommand{\Mon}{\operatorname{Mon}}
\newcommand{\Br}{\operatorname{Br}}

\newcommand{\ord}{\operatorname{ord}}

\newcommand{\Sym}{\mathfrak{S}}
\newcommand{\ram}{e}

\newcommand{\GQ}{G_{\Q}}
\DeclareMathOperator{\genus}{g}

\title[Belyi's theorem: coverings, dessins, and fields of definition]
{Belyi's theorem: coverings, dessins,\\ and fields of definition}

\author{Dinamo Djounvouna}
\address{Department of Mathematics, University of Manitoba, Winnipeg, MB, Canada}
\email{djounvod@myumanitoba.ca}

\subjclass[2020]{Primary 14H57, 11G32; Secondary 14H30, 30F10, 20B25, 14H10}
\keywords{Belyi's theorem, dessins d'enfants, field of moduli, field of definition,
Galois descent, ramified coverings, monodromy, plane trees}

\date{\today}
\begin{document}
\begin{abstract}
Belyi's theorem asserts that a smooth projective curve over $\C$ is defined over a
number field if and only if it admits a non-constant morphism to $\Pp^1$ ramified over
at most three points. This article gives a complete, self-contained account of the
theorem---both implications, with all descent machinery proved rather than quoted---together
with a study of the objects the proof produces. Beyond the exposition it contains original
results, proved in full. The principal one is a structure theorem for the Belyi polynomials
$P_{m,n}(z)=\frac{(m+n)^{m+n}}{m^{m}n^{n}}z^{m}(1-z)^{n}$: their dessins are double stars,
they satisfy the exact identity $P_{m,n}=\pi_{k}\circ P_{m/k,\,n/k}$ with $k=\gcd(m,n)$ and
$\pi_{k}(w)=w^{k}$, and their monodromy is the wreath product $\Sym_{(m+n)/k}\wr\Z/k$, the
full symmetric group precisely when $\gcd(m,n)=1$. Around the sharp lower bounds
$d\geq 2g+1$ (all Belyi maps) and $d\geq 4g$ (clean maps) we study the \emph{extremal} maps
attaining $d=2g+1$: their monodromy lies in the alternating group, they need not be
cyclic---the smallest non-cyclic one has degree $5$, genus $2$, monodromy $A_{5}$, and is
defined over $\Q$---and they satisfy the exact mass formula
$\sum 1/|{\Aut}|=2(d-1)!/d(d+1)$, obtained from Boccara's cycle-factorization count---reproved
by a self-contained Frobenius computation---and verified by complete enumeration for $d\leq7$.
We also bound the degree of the rationalization step of Belyi's algorithm by $N!$ in the
number $N$ of irrational branch values. The theory is illustrated by fully computed
examples, and by an explicit $\GQ$-orbit of three plane trees whose fields of moduli are
the three conjugate embeddings of the non-Galois cubic field $\Q(\sqrt[3]{2})$.
\end{abstract}

\maketitle
\tableofcontents

\section{Introduction}\label{sec:intro}

A smooth projective curve over $\C$ is an analytic object: a compact Riemann surface. It
is also, by Riemann's existence theorem, an algebraic object: the zero locus of
polynomials. The polynomials have coefficients in $\C$, an enormous field, and the
arithmetic question is whether they can be taken in a small one---a number field.
Belyi's theorem answers this in a way that no one anticipated, by a criterion that is
neither arithmetic nor analytic but \emph{combinatorial}.

\begin{theorem}[Belyi, 1979]\label{thm:belyi-intro}
A smooth projective curve $X$ over $\C$ is defined over a number field if and only if
there exists a non-constant morphism $t\colon X\to\Pp^{1}_{\C}$ whose branch locus is
contained in a set of three points.
\end{theorem}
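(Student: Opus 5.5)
The proof splits into the two implications, and I will handle them separately.

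\emph{Number field $\Rightarrow$ three branch points.} This is Belyi's algorithm. Suppose $X$ is defined over a number field $K$. Choose any non-constant rational function $f\colon X\to\Pp^1$ defined over $K$. Its branch locus $B\subset\Pp^1(\Qbar)$ is then finite and consists of algebraic numbers (plus possibly $\infty$).

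\textbf{Rationalization.} The first stage moves the branch values into $\Pp^1(\Q)$. Let $N$ be the number of irrational branch values, and let $m(x)\in\Q[x]$ be the product of their minimal polynomials over $\Q$. Replace $f$ by $m\circ f$. Its branch values are $m(B)$ together with the critical values $m(\beta)$, where $m'(\beta)=0$. The set $m(B)$ now contains some rational points, and the new irrational values are roots of the polynomial $\operatorname{Res}_\beta(m'(\beta),\,x-m(\beta))$, whose degree is $\deg m-1$. Inducting on the maximal degree of an irrational branch value, or on the sum of such degrees, this process terminates with all branch values in $\Pp^1(\Q)$.

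\textbf{Reduction to three points.} The second stage works entirely over $\Q$. Apply a Möbius transformation so that three of the rational branch values are $0,1,\infty$, with a fourth value $\lambda=m/(m+n)\in(0,1)$. Then compose with $P_{m,n}$. This map sends $0,1,\infty$ to $0,0,\infty$ and $\lambda$ to $1$. Its only critical values are $0,1,\infty$, because its critical points are $0,1,\infty,\lambda$. Each such composition strictly decreases the number of branch values. Repeating until at most three remain proves the implication.

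\emph{Three branch points $\Rightarrow$ number field.} This is the harder direction, and I expect it to be the main obstacle, because it needs genuine descent.

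\textbf{Finiteness and $\Aut(\C)$-action.} By Riemann's existence theorem, a Belyi pair $(X,t)$ of degree $d$ is determined up to isomorphism by a transitive pair of permutations in $\Sym_d$, taken up to conjugacy. So there are only finitely many isomorphism classes of degree-$d$ covers of $\Pp^1$ branched over $\{0,1,\infty\}$. Now let $\sigma\in\Aut(\C)$ act on equations. The conjugate $(X^\sigma,t^\sigma)$ is again such a cover of the same degree, because $0,1,\infty$ are fixed by $\sigma$. Hence the orbit of $(X,t)$ under $\Aut(\C)$ is finite. Its stabilizer therefore has finite index, and its fixed field, the field of moduli $M$, is a finite extension of $\Q$.

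\textbf{Descent from the field of moduli.} Since $M$ is finite over $\Q$, it is a number field, but it remains to pass from the field of moduli to a field of definition. First, $(X,t)$ is defined over some finitely generated field $L=M(s_1,\dots,s_r)$. Choose a number field $K\supseteq M$, and specialize the transcendental parameters to a $K$-point of the variety $V$ parametrizing the coefficients. Here I use that the $\Aut(\C/K)$-conjugates of $(X,t)$ are all isomorphic to it. The pointwise-stable data, namely the finite set of isomorphism classes, then forces the isomorphism class at the specialized point to coincide with that of $(X,t)$, because the family is isotrivial over each component of $V$. Alternatively, one can argue by Weil's cocycle criterion, applied to $\Aut(\C/\Qbar)$ after rigidifying by marking a point, which kills automorphisms.

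I would present the isotriviality argument as follows. Take a $\Qbar$-variety $V$ with a family over it containing $(X,t)$ at a generic point. The fibre isomorphism type is locally constant, because there are finitely many types and each type corresponds to a constructible set, and over $\C$ it is constant on connected $V$. A $\Qbar$-point of $V$ then gives a model over a number field. Making this constancy rigorous is the crux: covers are determined by monodromy, which is locally constant in families of unramified covers of $\Pp^1\setminus\{0,1,\infty\}$.
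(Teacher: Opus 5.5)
Your ``$\Rightarrow$'' direction is the paper's Belyi algorithm (Lemmas~\ref{lem:rationalize} and~\ref{lem:concentrate}). The first half of your ``$\Leftarrow$'' (after the M\"obius normalization of Lemma~\ref{lem:mobius-3pts}, which you leave implicit) is Corollary~\ref{cor:moduli-number-field}.

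One small repair in Stage~1: with $m$ the product of the distinct minimal polynomials, only your second induction measure works. The number of irrational points of the $\GQ$-closure drops from $\deg m$ to at most $\deg m-1$. The maximal degree, by contrast, can go up, because the new critical values are roots of a rational polynomial of degree $\deg m-1$ and can have larger individual degree than any old branch value.

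The genuine gap is the rest of ``$\Leftarrow$'': passing from the field of moduli $M$ to a field of definition. That is where the difficulty lies, and you leave it unproved. Concretely:
(i) You claim every $\Aut(\C/K)$-conjugate of $(X,t)$ is isomorphic to it for a number field $K\supseteq M$. This says $\Aut(\C/M)\subseteq U(X,t)$, i.e.\ that $U(X,t)$ is closed. Finite index alone does not give this. The paper obtains it (Step~1 of Theorem~\ref{thm:rigid-descent}) only after knowing that $(X,t)$ is defined over a finite extension of $M$, which is exactly what is to be shown.
(ii) ``Finitely many types, each constructible, hence locally constant'' is not a valid implication. A connected variety can be cut non-trivially into finitely many constructible pieces (a point and its complement). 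So nothing in your argument stops the fibre over your $\Qbar$-point from being a different one of the finitely many degree-$d$ covers.
(iii) The Weil-criterion alternative hits the obstruction of Remark~\ref{rem:rigid-descent-structure}. Lemma~\ref{lem:galois-descent} needs a finite group. Applying it to $\Aut(\C/\Qbar)$ first needs $\Aut(\C/\Qbar)\subseteq U(X,t)$, which is the same closedness issue as in (i). Marking a point gives a canonical, hence cocycle, family of isomorphisms only on the finite-index subgroup fixing the class of a point over a rational non-branch value, and you do not carry that construction out.

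Either of two arguments closes the gap.

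The paper's route (Theorem~\ref{thm:moduli-covering}) stays with the field of moduli. Choose $Q\in\Pp^{1}(\Q)\smallsetminus\Br(t)$, a point $P\in t^{-1}(Q)$, and a function $z$ whose only pole is at $P$, of minimal order, normalized to have leading coefficient $1$ and vanishing constant term in its expansion in $t-Q$. Then $\C(X)=\C(t,z)$, and the finite-index subgroup $U(X,t,P)$ acts semilinearly on $\C(X)$ fixing both $t$ and $z$. Hence the minimal polynomial of $z$ over $\C(t)$ has coefficients in $\C^{U(X,t,P)}$, a finite extension of $M$.

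Your spreading-out route can also be made rigorous. It would then be a genuinely different proof that never uses the field of moduli, so your computation of $M$ becomes superfluous. The steps are:
\begin{enumerate}
\item Take $V$ irreducible over $\Qbar$, so $V(\C)$ is connected, and shrink it until $\mathcal{X}\to\Pp^{1}_{V}$ is a family of smooth connected curves, finite \'etale over $(\Pp^{1}\smallsetminus\{0,1,\infty\})\times V$.
\item Its analytification is then a finite topological covering.
\item A covering of $(\Pp^{1}(\C)\smallsetminus\{0,1,\infty\})\times[0,1]$ is pulled back from either end. So the fibres over two points of $V(\C)$ joined by a path are isomorphic as coverings.
\item Theorem~\ref{thm:classification} upgrades this to an isomorphism of Belyi pairs between $(X,t)$ and the fibre over a $\Qbar$-point.
\end{enumerate}
This trades the Riemann--Roch normalization for spreading out plus homotopy invariance of coverings. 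As written, though, you assert this constancy rather than prove it, and, as you say yourself, it is the crux.
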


The number three is exactly the threshold at which a covering becomes a finite
combinatorial object---a permutation triple, equivalently a graph drawn on a
surface---because a three-point configuration on $\Pp^{1}$ carries no moduli, while a
four-point one already carries the cross-ratio; Section~\ref{sec:why} develops this
rigidity mechanism in detail, together with what the theorem buys downstream. Belyi's
theorem says that this combinatorial threshold coincides, exactly, with the arithmetic
condition of being defined over $\Qbar$. Grothendieck, who learned of the result shortly
after its appearance, regarded it as a turning point: the combinatorics of maps on
surfaces and the arithmetic of curves over number fields are, in his reading of Belyi, the
same subject seen twice. The \emph{dessins d'enfants} of the \emph{Esquisse d'un
Programme} are the objects that make the identification concrete.

The proof of the difficult implication given here
follows the strategy of K\"ock \cite{Kock2004}: split it using the
\emph{field of moduli} of a covering, and prove Belyi's reduction algorithm by an
induction on the branch locus. That strategy is developed in full: the descent theory is
proved rather than quoted, the algorithm is made
quantitative, the objects it manufactures are analysed in their own right, and the
examples are computed in full.

\subsection*{Contributions}
In keeping with the partly expository character of the article, we separate the
contributions into four groups, in decreasing order of novelty. Throughout, novelty
claims are calibrated against the literature on Shabat polynomials and plane trees
\cite{ShabatZvonkin1994,AdrianovZvonkin1998,LandoZvonkin2004}; the double-star trees
themselves are familiar objects there, and what we claim as new is the package of exact
statements and proofs below, which we have not found assembled in this form.

\medskip
\emph{(A) Principal new theorems, proved in full.}
\begin{enumerate}[label=(A\arabic*),leftmargin=2.9em]
\item\label{itm:A1} \emph{Structure and monodromy of the Belyi polynomials}
(Section~\ref{sec:belyi-poly}). The polynomials
$P_{m,n}(z)=\frac{(m+n)^{m+n}}{m^{m}n^{n}}\,z^{m}(1-z)^{n}$ that drive Belyi's algorithm
are analysed completely: their dessin is the \emph{double star} $D_{m,n}$, unique in its
passport (Proposition~\ref{prop:pmn-tree}); they satisfy the exact composition identity
\begin{equation*}
  P_{m,n}\;=\;\pi_{k}\circ P_{m/k,\,n/k},\qquad k=\gcd(m,n),\quad \pi_{k}(w)=w^{k},
\end{equation*}
\emph{with the normalizing constants matching on the nose}
(Theorem~\ref{thm:pmn-decomp}); and their monodromy group is
\begin{equation*}
  \Mon(P_{m,n})\;\cong\;\Sym_{(m+n)/k}\wr\Z/k
\end{equation*}
(Theorem~\ref{thm:pmn-monodromy}), so that $\Mon(P_{m,n})=\Sym_{m+n}$ if and only if
$\gcd(m,n)=1$. The proof combines a block-counting argument special to the double star
with Jordan's theorem on primitive groups containing a transposition, and identifies the
wreath product by an order count inside the full block-system stabilizer.
\item\label{itm:A2} \emph{A mass formula for extremal Belyi maps}
(Theorem~\ref{thm:mass-formula}). The Belyi maps attaining the minimal degree
$d=2g+1$ are those with passport $\langle(d),(d),(d)\rangle$; they exist only for $d$
odd, their monodromy always lies in the alternating group $A_{d}$
(Proposition~\ref{prop:parity}), and they satisfy the exact mass formula
\begin{equation*}
  \sum_{\substack{\text{extremal dessins}\\ \deg=d}}\frac{1}{|{\Aut}|}
  \;=\;\frac{2\,(d-1)!}{d(d+1)} ,
\end{equation*}
obtained by combining Boccara's factorization count \cite{Boccara1980} with the
orbit--stabilizer interpretation of dessins; the factorization count is itself reproved by
a self-contained Frobenius computation over the hook characters of $\Sym_{d}$, and the
formula is verified against a complete enumeration for $d\leq7$
(Theorem~\ref{thm:extremal-classification}, Appendix~\ref{app:computations}). The
resulting two-sided bound $M_{d}\leq N_{d}\leq d\,M_{d}$ on the number of extremal
dessins is recorded in Remark~\ref{rem:mass-asymptotics}.
\end{enumerate}

\medskip
\emph{(B) New explicit computations, independently cross-checked.}
\begin{enumerate}[label=(B\arabic*),leftmargin=2.9em]
\item\label{itm:B1} \emph{Classification of extremal maps for $g\leq3$}
(Theorem~\ref{thm:extremal-classification}). There are exactly $1$, $4$, $30$ extremal
dessins in degrees $3,5,7$, with monodromy $\Z/3$; $\Z/5, A_{5}$; and
$\Z/7,\mathrm{PSL}_{2}(\mathbb{F}_{7}),A_{7}$. Cyclic extremal maps in any odd degree number exactly
$\prod_{p^{k}\|d}p^{k-1}(p-2)$, specializing to $d-2$ in prime degree
(Proposition~\ref{prop:cyclic-extremal}), so extremal does \emph{not} imply cyclic; the unique non-cyclic extremal map of degree $5$ has trivial automorphism group
and is defined over $\Q$ (Corollary~\ref{cor:A5-rational}). Representative permutation
triples and the enumeration algorithm are given in Appendix~\ref{app:computations}.
\item\label{itm:B2} \emph{Explicit Galois orbits of degree-six plane trees}
(Section~\ref{sec:galois-dessins}). In degree six, the passport
$\langle(3,2,1),(2,2,1,1),(6)\rangle$ contains exactly three plane trees forming a single
$\GQ$-orbit; the field of moduli of the tree attached to the root $a_{i}$ of
$25a^{3}-12a^{2}-24a-16$ is the embedded cubic field $\Q(a_{i})$, and the three
$\Q(a_{i})$ are the conjugate embeddings of one abstract non-Galois cubic field,
isomorphic to $\Q(\sqrt[3]{2})$ (Theorem~\ref{thm:cubic-orbit}). The neighbouring passport
$\langle(3,2,1),(3,1,1,1),(6)\rangle$ contains two trees with field of moduli $\Q(i)$
(Theorem~\ref{thm:quadratic-orbit}). Combinatorial and algebraic counts are verified
independently, and the elimination is documented in
Appendix~\ref{app:computations}, including the checks that no extraneous solutions arise.
\end{enumerate}

\medskip
\emph{(C) Useful elementary observations.}
\begin{enumerate}[label=(C\arabic*),leftmargin=2.9em]
\item\label{itm:C1} \emph{Lower bounds for Belyi degrees}
(Theorem~\ref{thm:degree-bound}): $d\geq2g+1$ always and $d\geq4g$ for pre-clean maps,
with equality cases, attained by $y^{2g+1}=x(1-x)$ and $y^{4}=x(1-x)^{2}$. The
inequalities are direct consequences of Riemann--Hurwitz; their value lies in the equality
analysis, which seeds \ref{itm:A2} and \ref{itm:B1}.
\item\label{itm:C2} \emph{Quantitative rationalization}
(Theorem~\ref{thm:rationalization}): the rationalizing stage of Belyi's algorithm can be
performed by a polynomial of degree at most $N!$, where $N$ is the number of
\emph{irrational} branch values (counted after closing up under $\GQ$); and no bound in
terms of cardinality alone is possible for the concentration stage, the obstruction being
one of height (Remark~\ref{rem:no-height-bound}).
\end{enumerate}

\medskip
\emph{(D) Expository reconstruction:} a self-contained treatment of Galois descent
(Section~\ref{sec:descent}) including the semilinear descent lemma with full proof, Weil's
criterion, the field-of-moduli theorems---with the closedness argument that reduces rigid
descent over the infinite group $\Aut(\C)$ to finite Galois descent spelled out
(Theorem~\ref{thm:rigid-descent})---and a complete proof of both implications of Belyi's
theorem (Section~\ref{sec:proof}); the motivation section (Section~\ref{sec:why}); the
worked examples of Section~\ref{sec:examples}; and the figures.

\subsection*{Conventions}
All curves are smooth, projective, and geometrically connected. For a field $K$ we write
$\overline{K}$ for an algebraic closure and $G_{K}=\Gal(\overline{K}/K)$; $\GQ$ denotes
$\Gal(\Qbar/\Q)$. We work throughout in characteristic zero, and $\C$ may be replaced by
any algebraically closed field of characteristic zero of the same cardinality without
change. A \emph{covering} means a finite, generically \'etale morphism of curves; its
\emph{branch locus} $\Br(t)\subseteq\Pp^{1}$ is the set of critical \emph{values}. We
write $\Sym_{d}$ for the symmetric group on $d$ letters. Results quoted from the
literature carry an attribution in brackets after the environment name; numbered results
without attribution are proved in the text.
\section{Why three points?}\label{sec:why}

Before any proofs, we explain what makes the number three the decisive one, and why the
resulting theorem has the reach it has.

\subsection{Rigidity: the three-point condition is combinatorial}
\label{ssec:rigidity}

Let $S\subseteq\Pp^{1}(\C)$ be finite with $|S|=s$, and let
$t\colon X\to\Pp^{1}$ be a covering of degree $d$ with $\Br(t)\subseteq S$. Restricting
$t$ over the complement gives a genuine topological covering
\begin{equation}\label{eq:unramified-restriction}
  t^{-1}\bigl(\Pp^{1}(\C)\smallsetminus S\bigr)\;\longrightarrow\;
  \Pp^{1}(\C)\smallsetminus S ,
\end{equation}
and such coverings are classified by the monodromy representation of the fundamental
group. Now $\Pp^{1}(\C)\smallsetminus S$ is a sphere with $s$ punctures, whose
fundamental group is free of rank $s-1$:
\begin{equation}\label{eq:pi1}
  \pi_{1}\bigl(\Pp^{1}(\C)\smallsetminus S,\ast\bigr)
  \;=\;\Bigl\langle \gamma_{1},\dots,\gamma_{s}\ \Bigm|\
  \gamma_{1}\gamma_{2}\cdots\gamma_{s}=1\Bigr\rangle .
\end{equation}
The decisive point is that this group depends only on $s$, not on the positions of the
points of $S$: the moduli space of $s$-point configurations on $\Pp^{1}$ modulo
$\mathrm{PGL}_{2}$ has dimension $s-3$. For $s\leq 3$ that dimension is $\leq 0$: any
three distinct points can be moved to $\{0,1,\infty\}$ by a M\"obius
transformation---uniquely once the points are ordered, and in one of $3!=6$ ways if
they are not (Lemma~\ref{lem:mobius-3pts})---so \emph{the branch data carry no
continuous parameters at all}. A degree-$d$ covering branched over three points is
therefore nothing more than a triple of permutations
\begin{equation}\label{eq:triple}
  \sigma_{0}\sigma_{1}\sigma_{\infty}=1\quad\text{in }\Sym_{d},
  \qquad\langle\sigma_{0},\sigma_{1}\rangle\ \text{transitive},
\end{equation}
taken up to simultaneous conjugation---a finite, purely combinatorial datum. For $s=4$
the configuration has one modulus, the cross-ratio, and the coverings acquire a
continuous family; the rigidity is lost.

This is the mechanism behind the easy half of Belyi's theorem: something rigid and finite
cannot move under $\GQ$ except within a finite orbit, and finite orbits mean number
fields. The whole of Section~\ref{sec:descent} is the technology for turning ``finite
orbit'' into ``defined over a number field''.

\begin{figure}[t]
\centering
\begin{tikzpicture}[scale=1.0,
  dot/.style={circle,fill,inner sep=1.5pt},
  sheet/.style={draw,thick,rounded corners=10pt}]
\draw[thick] (0,0) ellipse (2.6 and 1.0);
\node at (3.35,0) {$\Pp^{1}_{\C}$};
\foreach \x/\lab in {-1.6/0, 0/1, 1.6/\infty}{
  \fill[red!75!black] (\x,0) circle (2.6pt);
  \node[below=3pt,red!60!black] at (\x,-0.1) {\small $\lab$};
}
\draw[-{Stealth[length=4pt]},blue!60!black,thick]
  (-1.6,0) ++(0.55,0) arc (0:350:0.55);
\node[blue!55!black] at (-1.6,0.85) {\small $\gamma_{0}$};
\draw[-{Stealth[length=4pt]},blue!60!black,thick]
  (0,0) ++(0.55,0) arc (0:350:0.55);
\node[blue!55!black] at (0,0.85) {\small $\gamma_{1}$};
\draw[-{Stealth[length=4pt]},blue!60!black,thick]
  (1.6,0) ++(0.55,0) arc (0:350:0.55);
\node[blue!55!black] at (1.6,0.85) {\small $\gamma_{\infty}$};
\begin{scope}[yshift=3.2cm]
\draw[thick] (0,0) ellipse (2.6 and 1.0);
\draw[thick] (-1.15,0.16) .. controls (-0.85,-0.28) and (-0.35,-0.28) .. (-0.05,0.16);
\draw[thick] (-1.0,0.02) .. controls (-0.75,0.24) and (-0.45,0.24) .. (-0.2,0.02);
\draw[thick] (0.95,0.16) .. controls (1.25,-0.28) and (1.75,-0.28) .. (2.05,0.16);
\draw[thick] (1.1,0.02) .. controls (1.35,0.24) and (1.65,0.24) .. (1.9,0.02);
\node at (3.35,0) {$X$};
\foreach \x in {-2.0,-0.9,0.35,1.5}{\fill[red!75!black] (\x,-0.55) circle (2.0pt);}
\end{scope}
\draw[-{Stealth[length=6pt]},very thick] (0,1.9) -- (0,1.15)
  node[midway,right=3pt] {$t$};
\node at (-3.1,1.55) {\small $\deg t=d$};
\end{tikzpicture}
\caption{A Belyi map. Over the thrice-punctured sphere the covering is unramified and
therefore determined by the images of $\gamma_{0},\gamma_{1},\gamma_{\infty}$ in
$\Sym_{d}$; since $\gamma_{0}\gamma_{1}\gamma_{\infty}=1$, the covering is exactly a
transitive permutation triple \eqref{eq:triple} up to conjugacy. No continuous parameter
survives: three points on $\Pp^{1}$ have no moduli.}
\label{fig:belyi-cover}
\end{figure}

\subsection{What the theorem buys}\label{ssec:applications}

\emph{(1) Arithmetic of curves made combinatorial.} Belyi's theorem converts the
question ``is $X$ defined over $\Qbar$?''---a statement about the existence of models,
quantified over all embeddings and all coordinate systems---into the exhibition of a
single map with three branch points, which is then encoded by
\eqref{eq:triple}. Every curve over $\Qbar$ is thus labelled by a finite graph on a
surface (Section~\ref{sec:monodromy}), and the absolute Galois group $\GQ$ acts on the
set of these labels.

\emph{(2) A faithful action of $\GQ$.} The action of $\GQ$ on dessins is faithful, and
remains faithful on the much smaller set of \emph{plane trees}, that is, on dessins of
polynomial Belyi maps of genus zero \cite{Schneps1994,LandoZvonkin2004}. Since $\GQ$ is
notoriously resistant to direct description, an action on objects a child can draw is a
genuine handle on it; the search for combinatorial invariants that separate Galois orbits
is a large industry, and Section~\ref{sec:galois-dessins} computes two orbits completely.

\emph{(3) Uniformization by triangle groups.} A Belyi map $t\colon X\to\Pp^{1}$ with
ramification indices dividing $(p,q,r)$ over $0,1,\infty$ exhibits $X$ as a quotient of
the $(p,q,r)$-triangle group's uniformizing domain. Consequently every curve defined over
$\Qbar$ is uniformized by a subgroup of finite index in a triangle group---an arithmetic
constraint invisible from the equations. The Klein quartic
(Example~\ref{ex:klein}) is the celebrated case $(2,3,7)$, where the bound is Hurwitz's.

\emph{(4) Diophantine applications.} Belyi maps are the engine of Elkies' proof that the
$abc$ conjecture implies the effective Mordell conjecture \cite{Elkies1991}: a Belyi map
of degree $d$ on $X$ converts a rational point into an $abc$-triple of controlled height,
and the three-point condition is exactly what makes the $abc$ inequality applicable.
Quantitative forms of Belyi's theorem---bounds on $d$ in terms of the curve---are
therefore of direct Diophantine interest, which is the motivation for the degree bounds of
Section~\ref{sec:degree} and for the work of Khadjavi \cite{Khadjavi2002} and
Lit\-can\-u \cite{Litcanu2004}.

\emph{(5) Inverse Galois theory.} A covering branched over three points with monodromy
group $G$, defined over $\Q$, realizes $G$ (or a related group) as a Galois group over
$\Q(u)$ and hence, by Hilbert irreducibility, over $\Q$. The rigidity method of Belyi,
Fried, Matzat, Shih and Thompson is precisely the systematic exploitation of the
three-point rigidity of \S\ref{ssec:rigidity}; see \cite{MalleMatzat1999}.

\emph{(6) Modular curves as the universal example.} The map
$\lambda\mapsto j$ from the $\lambda$-line to the $j$-line is a Belyi map of degree $6$
(Example~\ref{ex:lambda-j}), and its ramification $(3,3\,|\,2,2,2\,|\,2,2,2)$ encodes the
elliptic points of $\mathrm{SL}_{2}(\Z)$. More generally the modular curves $X(N)\to X(1)$
are Belyi maps, which is why Belyi's theorem is sometimes summarized as: \emph{every curve
over $\Qbar$ is a covering of a modular curve, branched over the elliptic points and the
cusp.}
\section{Preliminaries: fields, curves, and Riemann surfaces}
\label{sec:prelim}

This section fixes the language and records the facts used later. Everything here is
standard; we include the statements we actually invoke, with proofs when they are short
and used in an essential way.

\subsection{Fields and Galois theory}\label{ssec:fields}

Let $L/K$ be a field extension. The \emph{Galois group} $\Gal(L/K)$ is the group of
automorphisms of $L$ fixing $K$ pointwise. For an intermediate field $E$ and a subgroup
$H\leq\Gal(L/K)$ we write
\begin{equation}\label{eq:galois-correspondence}
  \Gamma(E)=\{\alpha\in\Aut(L):\alpha|_{E}=\mathrm{id}\},
  \qquad
  \Phi(H)=L^{H}=\{x\in L:\alpha(x)=x\ \forall\alpha\in H\}.
\end{equation}
These maps are inclusion-reversing and satisfy $E\subseteq\Phi\Gamma(E)$ and
$H\subseteq\Gamma\Phi(H)$, but need not be mutually inverse: for
$L=\Q(\sqrt[3]{2})$ and $K=\Q$ one has $\Gal(L/K)=\{1\}$, since any automorphism sends
$\sqrt[3]{2}$ to a cube root of $2$ lying in $L\subset\R$, hence to itself; so
$\Phi\Gamma(\Q)=L\neq\Q$. The extension is not normal. This failure is not a pathology to
be avoided---it is the phenomenon that Section~\ref{sec:galois-dessins} exhibits
combinatorially, in the shape of a $\GQ$-orbit of three plane trees whose fields of
moduli are the three conjugate embedded cubic fields $\Q(a_{i})$, each abstractly
isomorphic to $\Q(\sqrt[3]{2})$.

We use repeatedly the following two facts about the huge group $\Aut(\C)$, for which see
\cite[\S1]{Kock2004}.

\begin{lemma}[{\cite[Lem.~1.2]{Kock2004}}]\label{lem:aut-C}
Let $K$ be a subfield of $\C$. Every automorphism of $K$ extends to an automorphism of
$\C$, and $\C^{\Aut(\C/K)}=K$ whenever $K$ is algebraically closed in $\C$; in general
$\C^{\Aut(\C/K)}$ is the algebraic closure of $K$ in $\C$. In particular
$\C^{\Aut(\C/\Qbar)}=\Qbar$.
\end{lemma}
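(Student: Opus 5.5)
The plan has three parts: extend automorphisms through transcendence bases and Zorn's lemma, then prove the fixed-field statement for $\Aut(\C/K)$, then specialize to $K=\Qbar$.

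\textbf{Extension.} Let $\sigma\in\Aut(K)$. Choose a transcendence basis $B$ of $\C$ over $K$. Extend $\sigma$ to $K(B)$ by fixing each element of $B$; this is legitimate because $B$ is algebraically independent over $K$, so $K(B)$ is a purely transcendental extension. The field $\C$ is an algebraic closure of $K(B)$. The standard uniqueness of algebraic closures up to isomorphism (a Zorn's lemma argument on partial embeddings) then extends $\sigma$ to an embedding $\tilde\sigma\colon\C\to\C$. Its image is algebraically closed, and $\C$ is algebraic over $\tilde\sigma(K(B))$, which lies inside that image. Hence $\tilde\sigma$ is surjective, and so it is an automorphism.

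\textbf{Fixed field.} Let $\overline K^{\C}$ denote the algebraic closure of $K$ in $\C$. We must show $\C^{\Aut(\C/K)}=\overline K^{\C}$.

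For the inclusion "$\subseteq$", take $x\notin\overline K^{\C}$ and build an automorphism over $K$ that moves it. If $x$ is transcendental over $K$, extend $\{x\}$ to a transcendence basis $B$. The map $x\mapsto x+1$, fixing the rest of $B$, defines an automorphism of $K(B)$ over $K$. By the extension step it lifts to an element of $\Aut(\C/K)$ that does not fix $x$.

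For the inclusion "$\supseteq$" there is a real problem. An algebraic $x$ need not be fixed by $\Aut(\C/K)$: its conjugates over $K$ can be permuted. So the statement "$\C^{\Aut(\C/K)}$ is the algebraic closure of $K$" is false as literally written. For example, with $K=\Q$, complex conjugation moves $i$. The correct general statement is the following: the fixed field is the set of elements algebraic over $K$ and fixed by every automorphism of $\overline K^{\C}$ over $K$. In characteristic zero this is the purely inseparable closure of $K$, which is just $K$. To prove this, take $x$ algebraic over $K$ with a conjugate $x'\neq x$. Send $x\mapsto x'$ by a $K$-isomorphism $K(x)\to K(x')$, extend it to $\overline K^{\C}$, and then extend it to $\C$ by the extension step. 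Combined with the transcendental case, this gives $\C^{\Aut(\C/K)}=K$ for every subfield $K$.

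This proves the first assertion, which is in fact stronger than stated: $\C^{\Aut(\C/K)}=K$ holds for every $K$, not only for $K$ algebraically closed in $\C$. It also gives the special case $\C^{\Aut(\C/\Qbar)}=\Qbar$. The "in general" clause should be read as "the fixed field contains no elements transcendental over $K$", which the transcendental case establishes.

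\textbf{Main obstacle.} The hard step is the extension of embeddings into an algebraically closed field. Everything else is a short reduction to that step. I would invoke the classical Zorn's-lemma argument rather than reprove it.
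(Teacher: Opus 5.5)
Your proof is correct, and so is your diagnosis of the statement. As printed, the clause ``in general $\C^{\Aut(\C/K)}$ is the algebraic closure of $K$ in $\C$'' is false: complex conjugation fixes $\Q$ and moves $i$. The true statement in characteristic zero is $\C^{\Aut(\C/K)}=K$ for every subfield $K$, and this is what your argument proves, by moving transcendental elements with a shift and algebraic ones to a distinct separable conjugate. The paper gives no proof of its own, since it quotes the lemma from K\"ock, so there is no competing route to compare. Your argument (transcendence basis, extension of embeddings into an algebraically closed field, surjectivity because the image is algebraically closed and $\C$ is algebraic over it) is the standard one.

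The paper itself relies on your version, not on the misprinted one. Corollary~\ref{cor:moduli-number-field} uses $\C^{\Aut(\C/\Q)}=\Q$; with the printed clause it would only get $\Qbar$, and no number field would come out. Step~2(b)--(c) of Theorem~\ref{thm:rigid-descent} uses only the containment $\C^{\Aut(\C/L')}\subseteq\overline{L'}$. It then recovers $L'$ by a second argument, which your version makes unnecessary.

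Three small points are worth making explicit in a write-up:
\begin{enumerate}
\item In the surjectivity step you use that the extension of $\sigma$ to $K(B)$ maps $K(B)$ onto itself. That is what makes $\C$ algebraic over $\tilde\sigma(K(B))$.
\item When you lift the shift $x\mapsto x+1$ and the $K$-automorphism of $\overline K^{\C}$, you apply the extension step with base field $K(B)$, respectively $\overline K^{\C}$, rather than $K$. This is legitimate because you proved it for an arbitrary subfield; in the first case the transcendence basis is empty.
\item In the algebraic case you need $\overline K^{\C}$ to be algebraically closed, so that the $K$-embedding $\overline K^{\C}\to\overline K^{\C}$ is onto. This holds because $\C$ is algebraically closed.
\end{enumerate}
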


\begin{lemma}[{\cite[Lem.~1.3, 1.4]{Kock2004}}]\label{lem:index}
Let $U\leq\Aut(\C)$ and let $V\leq U$ be of finite index. Then $\C^{V}/\C^{U}$ is a finite
extension, and $[\C^{V}:\C^{U}]\leq[U:V]$ when $V$ is normal in $U$ or $U$ is closed.
Moreover, if there is a finite extension $K/\C^{U}$ with $\Aut(\C/K)\subseteq U$, then $U$
is closed, i.e.\ $U=\Aut(\C/\C^{U})$.
\end{lemma}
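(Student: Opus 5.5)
The statement has three parts, and I will prove them in this order. First, finiteness of $\C^{V}/\C^{U}$ with the degree bound. Second, the closedness criterion. Third, the general finiteness claim, using the first two.

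For the first part, begin with $V$ normal in $U$. Then $U$ acts on $L=\C^{V}$ through the finite quotient $U/V$, since $u(\C^{V})=\C^{uVu^{-1}}=\C^{V}$. Let $\bar H$ be the image of $U$ in $\Aut(L)$. Then $L^{\bar H}=\C^{U}$, and $\bar H$ is a finite group of order at most $[U:V]$. Artin's theorem now gives that $L/\C^{U}$ is Galois of degree $|\bar H|\le[U:V]$.

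Next suppose instead that $U$ is closed, so $U=\Aut(\C/K)$ with $K=\C^{U}$. I would show that every finite subextension $E\subseteq\C^{V}$ over $K$ satisfies $[E:K]\le[U:V]$. By the primitive element theorem, write $E=K(\alpha)$. Take the distinct $K$-conjugates $\alpha_1,\dots,\alpha_r$ of $\alpha$ lying in $\C$; there are $r=[E:K]$ of them, by characteristic zero. By Lemma~\ref{lem:aut-C}, each embedding $\alpha\mapsto\alpha_i$ extends to some $u_i\in\Aut(\C/K)=U$. Elements of $V$ fix $\alpha$. So if $u_iV=u_jV$, then $u_i(\alpha)=u_j(\alpha)$, and hence the cosets $u_iV$ are distinct, giving $r\le[U:V]$. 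To pass from finite subextensions to the whole field, I need $\C^{V}$ to be algebraic over $K$. For this, note that any $x\in\C^{V}$ has $U$-orbit of size at most $[U:V]$. The polynomial $\prod(X-y)$ over that orbit is then $U$-invariant, so it has coefficients in $K$, and $x$ is algebraic. Finally, an algebraic extension all of whose finite subextensions have degree at most $[U:V]$ is itself finite of degree at most $[U:V]$: take a subextension of maximal degree and apply the primitive element theorem.

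For the closedness criterion, set $K_0=\C^{U}$, and let $K/K_0$ be finite with $\Aut(\C/K)\subseteq U$. Replacing $K$ by its normal closure over $K_0$ inside the algebraic closure of $K_0$ in $\C$ only shrinks $\Aut(\C/K)$, so I may assume $K/K_0$ is Galois. Put $W=\Aut(\C/K)$. This is normal in $\Aut(\C/K_0)$, because $K$ is stable under $\Aut(\C/K_0)$. Now take $\sigma\in\Aut(\C/K_0)$ and aim to show $\sigma\in U$. The restriction $\sigma|_K$ lies in $\Gal(K/K_0)$. The key claim is that restriction $U\to\Gal(K/K_0)$ is surjective. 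Its image $H$ satisfies $K^{H}=K\cap\C^{U}=K_0$, so $H=\Gal(K/K_0)$ by Galois theory. Pick $u\in U$ with $u|_K=\sigma|_K$. Then $u^{-1}\sigma\in W\subseteq U$, so $\sigma\in U$. This shows $\Aut(\C/K_0)\subseteq U$, and the reverse inclusion is trivial.

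For the general finiteness claim, with neither hypothesis, I reduce to the normal case. Let $N=\bigcap_{u\in U}uVu^{-1}$ be the normal core of $V$, which is normal in $U$ of finite index. Then $\C^{V}\subseteq\C^{N}$, which is finite over $\C^{U}$ by the normal case. The only potential obstacle is bookkeeping in the second step: making sure the primitive element and extension arguments use Lemma~\ref{lem:aut-C} correctly, including the fact that $\C^{V}$ may a priori be transcendental, which the orbit-polynomial argument rules out.
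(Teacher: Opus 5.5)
Your proof is correct. The paper gives no argument of its own for this lemma; it only cites \cite[Lem.~1.3, 1.4]{Kock2004}. Your proposal therefore supplies a self-contained proof: Artin's theorem in the normal case, reduction to the normal core in general, coset-counting of conjugates plus the orbit-polynomial algebraicity argument in the closed case, and a Galois-correspondence argument for closedness.

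One citation needs a small patch. Lemma~\ref{lem:aut-C} as stated extends \emph{automorphisms} of a subfield, but you need to extend the $K$-isomorphism $K(\alpha)\to K(\alpha_{i})$ between two possibly different subfields. Let $F\subseteq\C$ be the splitting field over $K$ of the minimal polynomial of $\alpha$. Pick $\tau\in\Gal(F/K)$ with $\tau(\alpha)=\alpha_{i}$, which exists because the Galois group acts transitively on the roots of an irreducible polynomial. Then extend $\tau$ to $\C$ by Lemma~\ref{lem:aut-C}; the extension fixes $K$, so it lies in $U$.

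Two cosmetic remarks. Your third part uses only the normal case, not the closedness criterion. The final maximal-subextension step needs only algebraicity of $\C^{V}$ and maximality, not the primitive element theorem.

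For the closedness criterion there is an alternative route by index counting. Take $K/K_{0}$ Galois and $V=\Aut(\C/K)$. Then $V$ is normal in $U$ and $[U:V]\leq[\Aut(\C/K_{0}):V]=[K:K_{0}]$. The normal case gives $[K:K_{0}]=[\C^{V}:\C^{U}]\leq[U:V]$, which forces $U=\Aut(\C/K_{0})$. That route ties the two halves of the lemma together. However, it needs the fixed-field identity $\C^{\Aut(\C/K)}=K$. This identity does hold for every subfield $K\subseteq\C$, but the ``algebraic closure'' clause of Lemma~\ref{lem:aut-C} as printed asserts otherwise; for example, $\C^{\Aut(\C)}=\Q$.

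Your argument shows directly that restriction $U\to\Gal(K/K_{0})$ is surjective, because its image has fixed field $K\cap\C^{U}=K_{0}$. It uses only finite Galois theory and never invokes fixed fields of subgroups of $\Aut(\C)$, so it is unaffected by that discrepancy.
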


Lemma~\ref{lem:index} is the engine that converts ``finite orbit'' into ``finite
extension'', and hence the technical heart of the easy direction of Belyi's theorem.

\subsection{Curves and fields of definition}\label{ssec:curves}

\begin{definition}\label{def:defined-over}
Let $K\subseteq\C$ be a subfield and $X$ a variety over $\C$. We say $X$ is
\emph{defined over $K$}, and call $K$ a \emph{field of definition}, if either of the
following equivalent conditions holds:
\begin{enumerate}[label=\textup{(\alph*)},leftmargin=2.4em]
\item there is a variety $X_{K}$ over $K$ with $X\cong X_{K}\times_{K}\C$ over $\C$;
\item $X$ admits a covering by affine charts cut out by polynomials with coefficients in
$K$.
\end{enumerate}
A morphism $t\colon X\to Y$ is defined over $K$ if $X$, $Y$ and $t$ all descend
compatibly.
\end{definition}

Being defined over $K$ is a property of the isomorphism class, not of the equations. Thus
$S_{1}\colon y^{2}=x^{3}-\pi^{3}$ appears to require the transcendental field $\Q(\pi)$,
but the substitution $(x,y)\mapsto(x/\pi,\,y/\pi^{3/2})$ identifies it with
$S_{2}\colon y^{2}=x^{3}-1$, so $S_{1}$ is defined over $\Q$. Recognizing such hidden
descents is precisely the problem Belyi's theorem solves.

\begin{lemma}[Three-point normalization]\label{lem:mobius-3pts}
Given ordered triples $(z_{1},z_{2},z_{3})$ and $(w_{1},w_{2},w_{3})$ of distinct points
of $\Pp^{1}$, there is a unique M\"obius transformation $M$ with $M(z_{i})=w_{i}$. If both
triples consist of $\Qbar$-rational points, $M$ has coefficients in $\Qbar$; if both lie
in $\Pp^{1}(\Q)$, in $\Q$.
\end{lemma}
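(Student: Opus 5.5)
We reduce to the case where the target triple is the standard one $(0,1,\infty)$. Existence and uniqueness for an arbitrary pair of triples then follow by composition: if $M_{z}$ sends $(z_{1},z_{2},z_{3})$ to $(0,1,\infty)$ and $M_{w}$ sends $(w_{1},w_{2},w_{3})$ to $(0,1,\infty)$, then $M=M_{w}^{-1}\circ M_{z}$ does the job. The standard map is written down explicitly as the cross-ratio
\begin{equation*}
  M_{z}(x)\;=\;\frac{(x-z_{1})(z_{2}-z_{3})}{(x-z_{3})(z_{2}-z_{1})},
\end{equation*}
with the usual modifications when some $z_{i}=\infty$: delete the factors that contain $z_{i}$. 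For instance, if $z_{3}=\infty$ one takes $(x-z_{1})/(z_{2}-z_{1})$. A direct check shows that $M_{z}(z_{1})=0$, $M_{z}(z_{2})=1$ and $M_{z}(z_{3})=\infty$. The determinant of the associated matrix is nonzero, because it equals $(z_{2}-z_{3})(z_{2}-z_{1})(z_{1}-z_{3})$ up to sign, or the corresponding reduced product when a point is at infinity. Distinctness of the $z_{i}$ therefore guarantees that $M_{z}$ is a genuine M\"obius transformation.

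For uniqueness it suffices, again by composing, to show that a M\"obius transformation $N$ fixing $0$, $1$ and $\infty$ is the identity. Write $N(x)=(ax+b)/(cx+d)$. Fixing $\infty$ forces $c=0$, fixing $0$ forces $b=0$, and fixing $1$ forces $a=d$, so $N=\mathrm{id}$ in $\mathrm{PGL}_{2}$. An equivalent route is to observe that $N(x)-x$ has at most two zeros unless it vanishes identically.

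The rationality statements come from the explicit formula. The coefficients of $M_{z}$ are polynomial expressions in the finite $z_{i}$, namely products and differences of them. If all $z_{i}$ lie in $\Pp^{1}(K)$ for a field $K$, then $M_{z}$ has a matrix with entries in $K$. Inverting a matrix in $\mathrm{GL}_{2}(K)$ via the adjugate stays in $K$, and so does composing two such matrices, so $M=M_{w}^{-1}\circ M_{z}\in\mathrm{PGL}_{2}(K)$. Applying this with $K=\Qbar$ and with $K=\Q$ gives the two claims.

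A second argument for the rationality part uses uniqueness together with Galois descent. For $\sigma\in\Aut(\C/K)$, the conjugate ${}^{\sigma}M$ also sends $z_{i}$ to $w_{i}$, so ${}^{\sigma}M=M$. Normalizing a nonzero entry of the matrix to $1$, Lemma~\ref{lem:aut-C} places all entries in $K$ for $K=\Q$ or $K=\Qbar$. We do not need this second route, since the explicit formula already suffices.

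No step presents a real obstacle. The only point requiring care is the bookkeeping when one of the points equals $\infty$, which the convention of deleting factors handles uniformly.
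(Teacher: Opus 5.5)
Your proof is correct and follows the paper's argument: send each triple to $(0,1,\infty)$ by the cross-ratio, take $M=M_{w}^{-1}\circ M_{z}$, get uniqueness from the fact that a M\"obius map fixing $0,1,\infty$ is the identity, and read rationality off the explicit matrix entries (your handling of points at $\infty$ and of the determinant fills in details the paper leaves implicit). The optional Galois-descent route is indeed unnecessary; for $K=\Q$ it would need $\C^{\Aut(\C/\Q)}=\Q$, which is true, but Lemma~\ref{lem:aut-C} as literally stated gives only the algebraic closure of $K$ when $K$ is not algebraically closed in $\C$, so citing it would not place the entries in $\Q$.
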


\begin{proof}
The cross-ratio $(z,z_{1},z_{2},z_{3})=\frac{(z-z_{1})(z_{2}-z_{3})}{(z-z_{3})(z_{2}-z_{1})}$
is the unique M\"obius map sending $(z_{1},z_{2},z_{3})$ to $(0,1,\infty)$; composing the
one for $(z_{i})$ with the inverse of the one for $(w_{i})$ gives $M$, whose coefficients
are visibly polynomial in the $z_{i},w_{i}$. Uniqueness: a M\"obius transformation fixing
$0,1,\infty$ fixes $\Pp^{1}$ pointwise, since $az+b=z(cz+d)$ for three values of $z$
forces $c=b=0$, $a=d$.
\end{proof}

\begin{theorem}[Riemann--Hurwitz]\label{thm:RH}
Let $t\colon X\to Y$ be a covering of degree $d$ of smooth projective curves in
characteristic zero. Then
\begin{equation}\label{eq:RH}
  2\genus(X)-2\;=\;d\bigl(2\genus(Y)-2\bigr)\;+\;\sum_{P\in X}\bigl(\ram_{P}-1\bigr),
\end{equation}
where $\ram_{P}$ is the ramification index at $P$. For $Y=\Pp^{1}$ and $\Br(t)\subseteq
S$, writing $n_{s}=\#t^{-1}(s)$ for $s\in S$ and using $\sum_{P\in t^{-1}(s)}\ram_{P}=d$,
\begin{equation}\label{eq:RH-belyi}
  2\genus(X)-2\;=\;-2d\;+\;\sum_{s\in S}\bigl(d-n_{s}\bigr).
\end{equation}
In particular, for a Belyi map ($S=\{0,1,\infty\}$),
$2\genus(X)-2=d-(n_{0}+n_{1}+n_{\infty})$.
\end{theorem}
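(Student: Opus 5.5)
We prove the Riemann--Hurwitz formula \eqref{eq:RH} by comparing a nonzero meromorphic differential on $Y$ with its pullback to $X$. Since the paper takes \eqref{eq:RH} as the standard result, the main content to verify is the specialization \eqref{eq:RH-belyi}. For completeness I sketch both.

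For the general formula, fix a nonzero meromorphic differential $\omega$ on $Y$, so that $\deg(\omega)=2\genus(Y)-2$. The pullback $t^{*}\omega$ is a nonzero meromorphic differential on $X$, because $t$ is generically étale in characteristic zero. At a point $P\in X$ with $Q=t(P)$, choose local coordinates $u$ at $P$ and $v$ at $Q$ with $v=u^{\ram_{P}}$. This is possible because the ramification is tame in characteristic zero. If $\omega=f(v)\,dv$ with $\ord_{Q}f=m$, then
\[
t^{*}\omega=f(u^{\ram_P})\,\ram_P u^{\ram_P-1}\,du ,
\]
so $\ord_{P}(t^{*}\omega)=\ram_{P}m+(\ram_{P}-1)$. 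Summing over the fibre $t^{-1}(Q)$ and using $\sum_{P\mapsto Q}\ram_{P}=d$ gives
\[
\deg(t^{*}\omega)=d\,\deg(\omega)+\sum_{P}(\ram_{P}-1).
\]
The left side equals $2\genus(X)-2$, which yields \eqref{eq:RH}. The only delicate point is the local normal form $v=u^{e}$. It follows from writing $v=u^{e}h(u)$ with $h(0)\neq0$ and extracting an $e$-th root of $h$ by Hensel's lemma or the implicit function theorem. This step uses characteristic zero, or at least that $p\nmid e$.

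For \eqref{eq:RH-belyi}, set $Y=\Pp^{1}$, so $2\genus(Y)-2=-2$. Every $P$ with $\ram_{P}>1$ lies over a branch point, hence over $S$. The sum therefore splits over the fibres: $\sum_{s\in S}\sum_{P\in t^{-1}(s)}(\ram_{P}-1)$. Each inner sum equals $d-n_{s}$. Points of $S$ that are not branch values have $n_{s}=d$ and contribute zero, so enlarging the set $S$ beyond $\Br(t)$ is harmless. The Belyi case $|S|=3$ then gives $-2d+3d-(n_{0}+n_{1}+n_{\infty})$, which equals $d-(n_{0}+n_{1}+n_{\infty})$.

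I expect no real obstacle here. The one point to check is that $\deg$ of a canonical divisor is $2g-2$, which I would take as the definition of genus used in the paper or quote as standard.
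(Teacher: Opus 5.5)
Your proposal is correct and takes essentially the paper's approach. The paper states \eqref{eq:RH} as a standard fact without proof, and obtains \eqref{eq:RH-belyi} exactly as you do: all ramification lies over $S$, so $\sum_{P}(\ram_{P}-1)$ splits into the fibre sums $\sum_{P\in t^{-1}(s)}(\ram_{P}-1)=d-n_{s}$, and your pullback-of-a-differential argument supplies the standard proof of the general formula that the paper quotes (like the paper's identity $\sum_{P\in t^{-1}(Q)}\ram_{P}=d$, it implicitly assumes an algebraically closed base such as $\C$).
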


Identity \eqref{eq:RH-belyi} with $|S|=3$ is used so often below that we single it out:
\begin{equation}\label{eq:belyi-RH}
  \boxed{\;2g-2\;=\;d-\bigl(n_{0}+n_{1}+n_{\infty}\bigr)\;}
\end{equation}
where $g=\genus(X)$ and $n_{s}$ is the number of points above $s$. Every degree bound in
Section~\ref{sec:degree} is read off from \eqref{eq:belyi-RH}.

\subsection{The analytic--algebraic dictionary}\label{ssec:dictionary}

A \emph{Riemann surface} is a connected $2$-manifold with a maximal atlas of charts whose
transition functions are biholomorphic. The examples we need are the plane $\C$; the
Riemann sphere $\Pp^{1}=\C\cup\{\infty\}$ with the two charts $z$ and $1/z$; the
quotients $\C/\Z\cong\C^{*}$ and $\C/\Lambda$ (complex tori); and the unit disc $\Delta$.
The uniformization theorem states that every simply connected Riemann surface is
isomorphic to exactly one of $\Pp^{1}$, $\C$, $\Delta$; passing to quotients by the
fundamental group acting on the universal cover, every Riemann surface is a quotient of
one of these three by a discrete group of automorphisms.

We shall use the dictionary in the following form; see \cite{Miranda1995,GGD2012}.

\begin{theorem}[Riemann existence theorem and the GAGA dictionary]
\label{thm:dictionary}
The following categories are equivalent:
\begin{enumerate}[label=\textup{(\roman*)},leftmargin=2.4em]
\item compact Riemann surfaces and non-constant holomorphic maps;
\item smooth projective curves over $\C$ and non-constant morphisms;
\item field extensions of $\C$ of transcendence degree $1$, finitely generated, with
$\C$-algebra homomorphisms reversed.
\end{enumerate}
Under this equivalence a curve $X$ corresponds to its function field $\C(X)$, a morphism
$t\colon X\to\Pp^{1}$ to the finite extension $\C(X)/\C(t)$ of degree $\deg t$, and a
meromorphic function to a morphism to $\Pp^{1}$. Moreover, every finite topological
covering of $\Pp^{1}(\C)\smallsetminus S$ extends uniquely to a covering of curves
branched only in $S$.
\end{theorem}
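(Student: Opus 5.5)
The equivalences in Theorem~\ref{thm:dictionary} are standard, and the proof I would give assembles them from three independent pieces rather than proving everything from scratch.

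\emph{Step 1: from (ii) to (i).} Analytification sends a smooth projective curve over $\C$ to a compact Riemann surface and a morphism to a holomorphic map. Full faithfulness follows from GAGA, or more elementarily from the fact that a holomorphic map between projective curves has algebraic graph (Chow's theorem).

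\emph{Step 2: from (i) to (iii).} For a compact Riemann surface $X$, I take the field $\mathcal{M}(X)$ of meromorphic functions. The key analytic input is the existence of a non-constant meromorphic function $f$, obtained via Riemann--Roch or Hodge theory; this is the genuinely hard analytic point and I would quote it from \cite{Miranda1995}. Given $f$ of degree $d$, I show that $\mathcal{M}(X)/\C(f)$ is finite of degree exactly $d$. For any $h$, the elementary symmetric functions of $h$ on the fibres of $f$ are meromorphic on $\Pp^{1}$, hence rational. This makes $h$ a root of a degree-$d$ polynomial over $\C(f)$. Functions separating the points of a generic fibre then give equality of degrees. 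Conversely, from a function field $F$ I recover $X$ as the set of discrete valuations of $F/\C$, topologized via a primitive element. A map of curves $X\to Y$ corresponds to pullback $\mathcal{M}(Y)\to\mathcal{M}(X)$, and this explains why arrows are reversed. The same construction identifies $t\colon X\to\Pp^{1}$ with $\C(t)\subset\C(X)$ of degree $\deg t$.

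\emph{Step 3: from (iii) to (ii).} Choose a primitive element, giving a plane model $P(t,y)=0$. Normalize its projective closure to obtain a smooth projective curve whose function field is the given field.

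\emph{Step 4: the final sentence (extension of coverings).} Let $U\to\Pp^{1}(\C)\smallsetminus S$ be a finite covering. Pull back the complex structure to make $U$ a Riemann surface. Over a punctured disc around $s\in S$, each connected component of the preimage is a finite connected covering of a punctured disc, hence isomorphic to $z\mapsto z^{e}$. I fill in each puncture with one point, obtaining a compact Riemann surface with a holomorphic map to $\Pp^{1}$ branched only in $S$. Uniqueness holds because any extension is forced locally to be this filling, by the Riemann removable singularity theorem. Steps 1--3 then make the result algebraic.

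The main obstacle is the existence of non-constant meromorphic functions in Step 2. Everything else is formal or local. I would not reprove it; I would cite it, consistent with the paper's practice for standard facts.
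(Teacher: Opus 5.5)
The paper gives no proof of this statement. It is quoted as standard with a pointer to \cite{Miranda1995,GGD2012}. The only parts the paper argues anywhere are the local ones in your Step~4: pulling back the complex structure, filling in one point per local component, and using removable singularities for uniqueness. These appear in the proofs of Theorem~\ref{thm:classification} and Proposition~\ref{prop:finiteness}. Your outline is a correct sketch of the standard argument in those references. Compared with the paper's bare citation, it isolates the one non-formal input, and it handles the last sentence of the theorem the way the paper does elsewhere.

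That isolation needs one correction. The analytic input you actually use is stronger than the existence of a single non-constant meromorphic function.
\begin{itemize}
\item In Step~2, the inequality $[\mathcal{M}(X):\C(f)]\geq d$ requires a function taking $d$ distinct values on a generic fibre of $f$.
\item Recovering $X$ from the valuations of $\mathcal{M}(X)$ requires $p\mapsto\ord_{p}$ to be injective.
\end{itemize}
Both are the statement that $\mathcal{M}(X)$ separates points. One $f$ alone, through your symmetric-function argument, gives only the upper bound $[\mathcal{M}(X):\C(f)]\leq d$. So ``everything else is formal or local'' overstates the case. What you should cite is the separation property, for instance analytic Riemann--Roch producing, for each point $p$, a function whose only pole is at $p$. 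This is the same device the paper uses in the proof of Theorem~\ref{thm:moduli-covering}.

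Two smaller omissions are easy to fill.
\begin{itemize}
\item Fullness of (i)$\to$(iii): every $\C$-embedding $\phi\colon\mathcal{M}(Y)\to\mathcal{M}(X)$ comes from a holomorphic map. The map sends $p$ to the unique point $q\in Y$ with $\ord_{p}\circ\phi$ a positive multiple of $\ord_{q}$, and then $\phi(y)=y\circ F$ for a local coordinate $y$ at $q$.
\item In Step~4, the topological covering must be connected if the result is to be a single curve in the paper's sense.
\end{itemize}
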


The last sentence---Riemann's existence theorem proper---is what makes
\S\ref{ssec:rigidity} into mathematics: it guarantees that \emph{every} permutation
triple \eqref{eq:triple} is realized by an actual algebraic curve, so that the
combinatorial classification is not merely necessary but sufficient.
\section{Coverings, monodromy, and dessins}\label{sec:monodromy}

We now make precise the correspondence sketched in \S\ref{ssec:rigidity}, in the form used
throughout the rest of the article.

\subsection{Monodromy}\label{ssec:monodromy}

Let $t\colon X\to\Pp^{1}$ be a covering of degree $d$ with $\Br(t)\subseteq S=\{0,1,\infty\}$
and fix a base point $\ast\in\Pp^{1}(\C)\smallsetminus S$ together with a labelling
$t^{-1}(\ast)=\{1,\dots,d\}$. Analytic continuation of the sheets along loops defines the
\emph{monodromy representation}
\begin{equation}\label{eq:monodromy-rep}
  \rho\colon\pi_{1}\bigl(\Pp^{1}(\C)\smallsetminus S,\ast\bigr)\longrightarrow\Sym_{d},
  \qquad \sigma_{0}:=\rho(\gamma_{0}),\ \ \sigma_{1}:=\rho(\gamma_{1}),\ \
  \sigma_{\infty}:=\rho(\gamma_{\infty}),
\end{equation}
whose image $\Mon(t):=\rho(\pi_{1})$ is the \emph{monodromy group}. By \eqref{eq:pi1},
$\sigma_{0}\sigma_{1}\sigma_{\infty}=1$, and $\Mon(t)$ is transitive because $X$ is
connected. The cycle type of $\sigma_{s}$ records the ramification over $s$: the cycles of
$\sigma_{s}$ correspond bijectively to the points of $t^{-1}(s)$, a cycle of length $\ell$
corresponding to a point with $\ram_{P}=\ell$. The triple of cycle types,
\begin{equation}\label{eq:passport}
  \Pi(t)\;=\;\bigl\langle \text{type}(\sigma_{0}),\ \text{type}(\sigma_{1}),\
  \text{type}(\sigma_{\infty})\bigr\rangle ,
\end{equation}
is the \emph{passport} of $t$. Riemann--Hurwitz \eqref{eq:belyi-RH} is then the statement
that the genus is determined by the passport.

\begin{theorem}[Classification of Belyi maps]\label{thm:classification}
The assignment $t\mapsto(\sigma_{0},\sigma_{1},\sigma_{\infty})$ induces a bijection
between
\begin{enumerate}[label=\textup{(\roman*)},leftmargin=2.4em]
\item isomorphism classes of pairs $(X,t)$ with $X$ a curve over $\C$ and
$t\colon X\to\Pp^{1}$ a covering of degree $d$ with $\Br(t)\subseteq\{0,1,\infty\}$, and
\item transitive triples $(\sigma_{0},\sigma_{1},\sigma_{\infty})\in\Sym_{d}^{3}$ with
$\sigma_{0}\sigma_{1}\sigma_{\infty}=1$, modulo simultaneous conjugation.
\end{enumerate}
Here $(X_{1},t_{1})\cong(X_{2},t_{2})$ means an isomorphism $f\colon X_{1}\to X_{2}$ with
$t_{2}\circ f=t_{1}$. In particular there are only finitely many such pairs for each $d$.
\end{theorem}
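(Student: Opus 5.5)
The plan is to prove the bijection in three steps: topological classification of unramified coverings of the thrice-punctured sphere, extension to compact curves via Theorem~\ref{thm:dictionary}, and a finiteness count. Write $U=\Pp^{1}(\C)\smallsetminus\{0,1,\infty\}$ and $\Gamma=\pi_{1}(U,\ast)$, which by \eqref{eq:pi1} is free on $\gamma_{0},\gamma_{1}$ with $\gamma_{\infty}=(\gamma_{0}\gamma_{1})^{-1}$.

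\emph{Step 1 (forward map is well defined).} Given $(X,t)$, restrict to the unramified covering \eqref{eq:unramified-restriction} $X^{\circ}=t^{-1}(U)\to U$. Choosing a labelling of $t^{-1}(\ast)$ gives the monodromy representation $\rho$ of \eqref{eq:monodromy-rep}, hence a triple with $\sigma_{0}\sigma_{1}\sigma_{\infty}=1$. This triple is transitive because $X^{\circ}$ is connected: it is $X$ minus finitely many points and $X$ is connected. Changing the labelling conjugates the triple simultaneously. An isomorphism $f$ with $t_{2}\circ f=t_{1}$ restricts to an isomorphism of coverings over $U$ and carries fibres over $\ast$ to fibres over $\ast$ compatibly with path lifting. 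So isomorphic pairs give conjugate triples.

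\emph{Step 2 (bijectivity).} Because $\Gamma$ is free on $\gamma_{0},\gamma_{1}$, transitive triples with product $1$ correspond exactly to transitive homomorphisms $\Gamma\to\Sym_{d}$. Such homomorphisms, up to conjugation, are the same as conjugacy classes of index-$d$ subgroups $H\le\Gamma$, namely point stabilizers. By covering space theory these correspond bijectively to connected degree-$d$ topological coverings of $U$ up to isomorphism.

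\begin{itemize}
\item \emph{Surjectivity.} Given a triple, form the covering $\widetilde U/H\to U$. By the last sentence of Theorem~\ref{thm:dictionary}, it extends uniquely to a covering of curves $t\colon X\to\Pp^{1}$ branched only in $\{0,1,\infty\}$.
\item \emph{Injectivity.} If two pairs give conjugate triples, their restrictions over $U$ are isomorphic as topological coverings, by some $f^{\circ}$. Then $f^{\circ}$ is holomorphic, since it is locally a composite of local inverses of $t_{1},t_{2}$. It is bounded near each puncture, and extends to a holomorphic isomorphism $X_{1}\to X_{2}$ by Riemann's removable singularity theorem in local coordinates on $X_{1}$; alternatively, use the uniqueness clause of the extension. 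By Theorem~\ref{thm:dictionary} this isomorphism is algebraic, and $t_{2}\circ f=t_{1}$ holds by continuity.
\end{itemize}

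\emph{Step 3 (finiteness).} There are at most $(d!)^{2}$ triples in $\Sym_{d}^{3}$ with product $1$, since $\sigma_{\infty}$ is determined by $\sigma_{0},\sigma_{1}$. So there are only finitely many classes.

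The main obstacle is the extension over the punctures in Step 2, which must produce a compact Riemann surface whose local structure at each puncture is a disc with $t\sim z^{\ell}$. The approach is to note that over a small punctured disc around $s$, each connected component of the covering is a finite connected covering of a punctured disc. Such a covering is $z\mapsto z^{\ell}$, because $\pi_{1}$ of the punctured disc is $\Z$. Filling in each component with one point then gives a compact surface with the correct cycle/point correspondence. Since this is exactly what the quoted Riemann existence theorem asserts, I will invoke Theorem~\ref{thm:dictionary} rather than redo it.
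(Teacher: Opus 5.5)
Your proof is correct and follows the same route as the paper: classify the restriction over $\Pp^{1}(\C)\smallsetminus\{0,1,\infty\}$ by conjugacy classes of index-$d$ subgroups, equivalently transitive permutation representations, i.e.\ triples via \eqref{eq:pi1}; compactify by filling in the punctures using Theorem~\ref{thm:dictionary}; and read finiteness off (ii). Your explicit injectivity step, extending the isomorphism over the punctures by the removable singularity theorem, is the argument the paper gives in the proof of Proposition~\ref{prop:finiteness}, so you have spelled out details that the paper's short proof leaves implicit.
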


\begin{proof}
The restriction \eqref{eq:unramified-restriction} is a degree-$d$ topological covering of
$\Pp^{1}(\C)\smallsetminus S$, and such coverings are classified by conjugacy classes of
index-$d$ subgroups of $\pi_{1}$, equivalently by transitive permutation representations
of degree $d$ up to conjugacy---that is, by triples as in (ii) via \eqref{eq:pi1}.
Conversely a topological covering of the punctured sphere carries a unique Riemann surface
structure making the projection holomorphic; it is compactified by filling in one point
for each cycle of each $\sigma_{s}$, and the resulting compact Riemann surface is
algebraic and the map a morphism, by Theorem~\ref{thm:dictionary}. Finiteness for fixed
$d$ is clear from (ii).
\end{proof}

\subsection{Dessins}\label{ssec:dessins}

Theorem~\ref{thm:classification} is already combinatorial, but the combinatorics becomes
visible when drawn.

\begin{definition}\label{def:dessin}
Let $t\colon X\to\Pp^{1}$ be a Belyi map. Its \emph{dessin} is the bipartite graph
$\mathcal{D}(t)\subset X$ with
\begin{itemize}[leftmargin=1.6em]
\item black vertices $t^{-1}(0)$, white vertices $t^{-1}(1)$,
\item edges the connected components of $t^{-1}\bigl((0,1)\bigr)$,
\end{itemize}
that is, $\mathcal{D}(t)=t^{-1}\bigl([0,1]\bigr)$ with its induced embedding in $X$. A
\emph{dessin d'enfant} is, abstractly, a connected bipartite graph embedded in a compact
oriented surface whose complementary faces are discs.
\end{definition}

The dictionary reads as follows, and is immediate from Theorem~\ref{thm:classification}
once one observes that $\sigma_{0}$ (resp.\ $\sigma_{1}$) rotates the edges around a
black (resp.\ white) vertex counterclockwise, while $\sigma_{\infty}$ rotates the edges
around a face.

\begin{proposition}\label{prop:dessin-dictionary}
Let $t$ be a Belyi map of degree $d$ on a curve of genus $g$. Then $\mathcal{D}(t)$ has
exactly $d$ edges; the black (resp.\ white) vertex degrees are the ramification indices
over $0$ (resp.\ over $1$); the faces of $\mathcal{D}(t)$ in $X$ correspond to the points
of $t^{-1}(\infty)$, a point of index $\ell$ giving a face bounded by $2\ell$ edge-sides;
and Euler's formula
$\#V-\#E+\#F=2-2g$ is exactly \eqref{eq:belyi-RH}. Conversely every dessin arises from a
unique Belyi map. Two Belyi maps are isomorphic if and only if their dessins are
isomorphic as embedded graphs.
\end{proposition}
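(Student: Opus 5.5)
The plan is to work directly from the permutation triple furnished by Theorem~\ref{thm:classification}, and to realize the dessin $\mathcal{D}(t)=t^{-1}([0,1])$ as the standard cellular decomposition of $X$ pulled back from a fixed decomposition of $\Pp^{1}$.

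First I would decompose the sphere. Give $\Pp^{1}$ the cell structure with vertices $0$ and $1$, the single edge $[0,1]$, and the single face $\Pp^{1}\smallsetminus[0,1]$, an open disc containing $\infty$. Over $\Pp^{1}\smallsetminus\{0,1,\infty\}$ the map $t$ is an unramified covering of degree $d$.
\begin{itemize}[leftmargin=1.6em]
\item The open interval $(0,1)$ is simply connected and avoids the branch locus, so its preimage consists of exactly $d$ disjoint arcs. These are the edges.
\item Near a point $P$ over $0$ (resp.\ $1$) of index $\ram_{P}$, the map is locally $z\mapsto z^{\ram_{P}}$ in suitable charts. Hence exactly $\ram_{P}$ edge-germs emanate from $P$, and the black (resp.\ white) degrees are the indices over $0$ (resp.\ $1$).
\item The punctured face $(\Pp^{1}\smallsetminus[0,1])\smallsetminus\{\infty\}$ is an annulus. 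Its preimage is a disjoint union of annuli, each closed up by one point of $t^{-1}(\infty)$. Near a point of index $\ell$ the map is $z^{\ell}$, so each component is a disc, and its boundary maps $\ell$-to-$1$ onto the boundary of the sphere's face. That boundary consists of two edge-sides, so the face of $\mathcal{D}(t)$ is bounded by $2\ell$ edge-sides.
\end{itemize}

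Next I would identify the rotations. With edges labelled by $t^{-1}(\ast)$ for a base point $\ast\in(0,1)$, lifting $\gamma_{0}$ moves an edge to the next edge counterclockwise around its black endpoint, using the local model $z^{e}$ and the orientation. This shows $\sigma_{0}$ is the black rotation, and likewise $\sigma_{1}$ is the white rotation. The relation $\sigma_{0}\sigma_{1}\sigma_{\infty}=1$ then forces $\sigma_{\infty}$ to act as the face rotation.

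Euler's formula follows by counting. Since $\#V=n_{0}+n_{1}$, $\#E=d$ and $\#F=n_{\infty}$, the formula $\#V-\#E+\#F=2-2g$ is literally \eqref{eq:belyi-RH}. This also gives an independent check that the faces are discs.

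For the converse, start from an abstract dessin with $d$ edges. Label its edges $1,\dots,d$ and define $\sigma_{0},\sigma_{1}$ as the counterclockwise rotations about black and white vertices, and put $\sigma_{\infty}=(\sigma_{0}\sigma_{1})^{-1}$. Connectedness of the graph gives transitivity of $\langle\sigma_{0},\sigma_{1}\rangle$. Theorem~\ref{thm:classification} then yields a Belyi map $t$, and the first part shows that $\mathcal{D}(t)$ has the same rotation data. An embedded bipartite graph with disc faces is recovered up to isomorphism from its rotation system, by gluing a disc into each face cycle. Hence $\mathcal{D}(t)$ is the given dessin, and uniqueness follows from the bijectivity in Theorem~\ref{thm:classification}.

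The final claim is that isomorphisms of Belyi maps correspond to isomorphisms of dessins. In one direction, an isomorphism $f$ with $t_{2}f=t_{1}$ carries $t_{1}^{-1}([0,1])$ onto $t_{2}^{-1}([0,1])$ and preserves orientation. In the other direction, an isomorphism of oriented embedded graphs is a relabelling of edges that conjugates the rotation triples simultaneously, so Theorem~\ref{thm:classification} gives an isomorphism of the maps.

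The step I expect to be the main obstacle is the orientation and convention check: that the lift of $\gamma_{0}$ really is the counterclockwise successor rather than the predecessor. This depends on the chosen composition order in $\sigma_{0}\sigma_{1}\sigma_{\infty}=1$. I would fix it by computing explicitly in the local model $z\mapsto z^{e}$, adjusting the convention in \eqref{eq:monodromy-rep} if necessary.
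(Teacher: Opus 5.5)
Your proposal is correct and takes essentially the same approach as the paper. It identifies $\sigma_{0},\sigma_{1}$ with the counterclockwise rotations at black and white vertices, reads $\#V-\#E+\#F$ as $(n_{0}+n_{1})-d+n_{\infty}$ to recover \eqref{eq:belyi-RH}, and gets the converse and the isomorphism criterion from the bijection of Theorem~\ref{thm:classification}, while also supplying details the paper treats as immediate: the pulled-back cell structure of $\Pp^{1}$ (faces are discs bounded by $2\ell$ edge-sides), and the reconstruction of the embedded graph from its rotation system, which is needed to see that $\mathcal{D}(t)$ really is the given dessin.
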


\begin{proof}
Only the last assertions need comment. Given the embedded graph, the cyclic order of edges
at each vertex determines $\sigma_{0},\sigma_{1}$, hence the triple, hence $(X,t)$ by
Theorem~\ref{thm:classification}; and the identity $\#V-\#E+\#F=2-2g$ reads
$(n_{0}+n_{1})-d+n_{\infty}=2-2g$, which is \eqref{eq:belyi-RH}.
\end{proof}

Two special cases matter later. If $X$ has genus $0$ and $t$ is a \emph{polynomial}, then
$t^{-1}(\infty)=\{\infty\}$ is a single totally ramified point, so $\mathcal{D}(t)$ has
exactly one face; a connected graph with $d$ edges and one face on the sphere has
$d+1$ vertices and is a \emph{plane tree}. Conversely every plane tree with $d$ edges is
the dessin of a degree-$d$ polynomial Belyi map, unique up to affine change of variable.
This is the case in which Section~\ref{sec:galois-dessins} computes.

\begin{definition}\label{def:clean}
A Belyi map $t$ is \emph{pre-clean} if every ramification index over $1$ is at most $2$,
and \emph{clean} if every ramification index over $1$ equals $2$. Equivalently, the dessin
of a clean Belyi map has all white vertices of degree $2$; erasing them turns
$\mathcal{D}(t)$ into an ordinary (non-bipartite) graph with $d/2$ edges.
\end{definition}

\begin{proposition}[Cleaning]\label{prop:cleaning}
If $t$ is a Belyi map on $X$, then $4t(1-t)$ is a clean Belyi map on $X$, of degree
$2\deg t$. Consequently $X$ is defined over $\Qbar$ if and only if it admits a clean Belyi
map.
\end{proposition}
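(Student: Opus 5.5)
The plan is to write $u=4t(1-t)=h\circ t$ with $h(w)=4w(1-w)$, a polynomial of degree $2$. Then $\deg u=2\deg t$ follows at once from multiplicativity of degrees under composition.

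The first step is to locate the branch points of $h$. Its only finite critical point is $w=\tfrac12$, with critical value $h(\tfrac12)=1$. The point $w=\infty$ is totally ramified over $\infty$. We also have
\[
h^{-1}(0)=\{0,1\},\qquad h^{-1}(1)=\{\tfrac12\}\ \text{(doubly)},\qquad h^{-1}(\infty)=\{\infty\}.
\]
Hence $\Br(h)=\{1,\infty\}$, and moreover $h$ maps $\{0,1,\infty\}$ into $\{0,\infty\}$.

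The second step bounds the branch locus of $u$. We use $\Br(h\circ t)\subseteq \Br(h)\cup h(\Br(t))$, which holds because ramification indices multiply along compositions: $\ram_P(h\circ t)=\ram_{t(P)}(h)\,\ram_P(t)$. This gives $\Br(u)\subseteq\{1,\infty\}\cup h(\{0,1,\infty\})=\{0,1,\infty\}$, so $u$ is a Belyi map.

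The third step is the cleanness check, which is the only point needing care. Take $P\in u^{-1}(1)$. Then $t(P)=\tfrac12$, because $\tfrac12$ is the unique preimage of $1$ under $h$. Since $\tfrac12\notin\{0,1,\infty\}\supseteq\Br(t)$, we get $\ram_P(t)=1$. Therefore $\ram_P(u)=\ram_{1/2}(h)\cdot 1=2$. So every ramification index over $1$ equals $2$, and $u$ is clean by Definition~\ref{def:clean}. This is the step where the hypothesis $\Br(t)\subseteq\{0,1,\infty\}$ is genuinely used, beyond merely bounding the branch locus.

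For the consequence, suppose first that $X$ is defined over $\Qbar$. Then Theorem~\ref{thm:belyi-intro} (the direction proved later in Section~\ref{sec:proof}) supplies a Belyi map $t$, and the construction above produces a clean one. Conversely, a clean Belyi map is in particular a Belyi map, so the other direction of Theorem~\ref{thm:belyi-intro} gives that $X$ is defined over $\Qbar$. I expect no real obstacle here. The only subtlety is logical: the corollary depends on Theorem~\ref{thm:belyi-intro}, so it should be read as a reformulation of that theorem and not invoked in its proof.
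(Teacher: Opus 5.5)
Your proposal is correct and follows the paper's proof essentially step for step. It uses the same factorization through $h(w)=4w(1-w)$, the same bound $\Br(h\circ t)\subseteq\Br(h)\cup h(\Br(t))\subseteq\{0,1,\infty\}$, and the same cleanness check: the fibre over $1$ is $t^{-1}(1/2)$, which is unramified for $t$, while $h$ contributes index $2$. The differences are cosmetic. You justify the composition rule by multiplicativity of ramification indices rather than the chain rule. You also spell out the degree count and derive the ``consequently'' clause from both directions of Theorem~\ref{thm:belyi-intro}, which the paper leaves implicit.
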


\begin{proof}
Let $q(w)=4w(1-w)=P_{1,1}(w)$. Then $q'(w)=4-8w$ vanishes only at $w=1/2$, where
$q=1$, so $\Br(q)\subseteq\{1,\infty\}$ and $q$ maps $\{0,1,\infty\}$ into
$\{0,\infty\}$. Hence $\Br(q\circ t)\subseteq \Br(q)\cup q(\Br(t))\subseteq\{0,1,\infty\}$,
and $q\circ t$ is a Belyi map. Over $1$ the fibre of $q\circ t$ is
$t^{-1}(1/2)$, which is unramified for $t$ (as $1/2\notin\Br(t)$) while $q$ is ramified of
order $2$ there; so every point over $1$ has index exactly $2$, i.e.\ $q\circ t$ is clean.
\end{proof}
\section{Galois descent and fields of moduli}\label{sec:descent}

The hard direction of Belyi's theorem produces, from a three-point covering, a
\emph{finite} set of Galois conjugates, and must then descend the curve to the fixed field.
This section develops the descent machinery in the form we need. Because we work with
smooth projective curves, where birational equivalence is isomorphism, all descent can be
carried out on function fields; this keeps the argument elementary and self-contained.

\subsection{Semilinear descent}\label{ssec:semilinear}

Let $L$ be a field, $G\leq\Aut(L)$ a finite subgroup, and $K=L^{G}$; by Artin's theorem
$L/K$ is Galois with group $G$ and $[L:K]=|G|$.

\begin{definition}\label{def:G-structure}
Let $W$ be an $L$-vector space. A map $r\colon W\to W$ is \emph{$\sigma$-linear}
($\sigma\in G$) if it is additive and $r(aw)=\sigma(a)r(w)$ for $a\in L$. A
\emph{$G$-structure} on $W$ is a family $(r_{\sigma})_{\sigma\in G}$ of $\sigma$-linear
bijections with $r_{1}=\mathrm{id}_{W}$ and $r_{\sigma}\circ r_{\tau}=r_{\sigma\tau}$.
\end{definition}

\begin{lemma}[Linear independence of characters]\label{lem:dedekind}
Let $\sigma_{1},\dots,\sigma_{n}$ be distinct homomorphisms from a group $\Gamma$ to
$L^{\times}$ and $w_{1},\dots,w_{n}$ elements of an $L$-vector space $W$. If
$\sum_{i}\sigma_{i}(\gamma)w_{i}=0$ for all $\gamma\in\Gamma$, then all $w_{i}=0$.
\end{lemma}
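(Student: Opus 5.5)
We argue by induction on $n$, following Artin's classical proof of Dedekind's lemma, and transport it verbatim from scalars to vectors. Suppose the statement fails, and choose a counterexample with $n$ minimal among all relations $\sum_{i}\sigma_{i}(\gamma)w_{i}=0$ (for all $\gamma\in\Gamma$) in which not every $w_{i}$ vanishes. By minimality every $w_{i}$ is nonzero, since otherwise we could delete that term and obtain a shorter counterexample. The case $n=1$ is immediate: $\sigma_{1}(\gamma)w_{1}=0$ with $\sigma_{1}(\gamma)\in L^{\times}$ invertible forces $w_{1}=0$ (take $\gamma=1$). So we may assume $n\geq2$.

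Since $\sigma_{1}\neq\sigma_{2}$, we fix $\delta\in\Gamma$ with $\sigma_{1}(\delta)\neq\sigma_{2}(\delta)$. Replacing $\gamma$ by $\delta\gamma$ in the relation and using multiplicativity gives
\[
\sum_{i}\sigma_{i}(\delta)\sigma_{i}(\gamma)w_{i}=0\quad\text{for all }\gamma .
\]
Multiplying the original relation by the scalar $\sigma_{1}(\delta)$ gives
\[
\sum_{i}\sigma_{1}(\delta)\sigma_{i}(\gamma)w_{i}=0 .
\]
Subtracting the two yields
\[
\sum_{i\geq2}\sigma_{i}(\gamma)\,w_{i}'=0\quad\text{for all }\gamma,
\qquad\text{where } w_{i}'=\bigl(\sigma_{i}(\delta)-\sigma_{1}(\delta)\bigr)w_{i}.
\]
This is a relation among the $n-1$ distinct characters $\sigma_{2},\dots,\sigma_{n}$. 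Its coefficient $w_{2}'$ is a nonzero scalar times the nonzero vector $w_{2}$, hence nonzero. This contradicts the minimality of $n$, so no counterexample exists.

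The only point requiring care, and thus the main obstacle, is keeping the scalars and vectors separate: every operation must be left multiplication by an element of $L$ on vectors of $W$. This is legitimate because $\sigma_{i}(\delta)$ and $\sigma_{1}(\delta)$ lie in $L$. No $\sigma$-semilinearity enters here; in the later use of the lemma, $\Gamma$ will be $L^{\times}$ or $G$ acting through the $G$-structure. An alternative route avoids minimality: choose an $L$-linear functional $\lambda$ with $\lambda(w_{j})\neq0$ for some $j$, which reduces the claim to the scalar Dedekind lemma in $L$. Either approach works, and we would present the minimal-counterexample version since it is self-contained.
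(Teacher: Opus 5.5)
Your proof is correct and is essentially the paper's argument: both use Artin's trick of evaluating the relation at $\delta\gamma$ and subtracting a scalar multiple of the original relation to eliminate one character. The paper phrases it as a direct induction (eliminating $\sigma_{n}$, concluding $w_{1}=0$, and then ``symmetrically'' the other $w_{i}$), whereas your minimal-counterexample framing reaches the contradiction in one pass and so avoids repeating the step with a different $\delta$ for each index.
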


\begin{proof}
Induct on $n$; the case $n=1$ is clear since $\sigma_{1}(\gamma)\in L^{\times}$. For
$n>1$ pick $\delta$ with $\sigma_{1}(\delta)\neq\sigma_{n}(\delta)$. Subtracting
$\sigma_{n}(\delta)^{-1}$ times the relation evaluated at $\delta\gamma$ from the relation
at $\gamma$ eliminates the $n$-th term and yields
$\sum_{i<n}\bigl(1-\sigma_{n}(\delta)^{-1}\sigma_{i}(\delta)\bigr)\sigma_{i}(\gamma)w_{i}=0$
for all $\gamma$. By induction each coefficient vanishes; for $i=1$ the scalar is nonzero,
so $w_{1}=0$, and symmetrically all $w_{i}=0$ for $i<n$; then $\sigma_{n}(\gamma)w_{n}=0$
gives $w_{n}=0$.
\end{proof}

\begin{lemma}[Nonvanishing of the trace]\label{lem:trace}
Let $W$ carry a $G$-structure and set $T(w)=\sum_{\sigma\in G}r_{\sigma}(w)$. Then
$T(W)\subseteq W^{G}:=\{w: r_{\sigma}(w)=w\ \forall\sigma\}$, and for every $w\neq 0$
there is $a\in L$ with $T(aw)\neq 0$. In particular $W\neq 0$ implies $W^{G}\neq 0$.
\end{lemma}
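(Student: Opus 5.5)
The proof has two parts: first that $T$ lands in $W^{G}$, then that some scalar multiple of a given nonzero $w$ has nonzero trace.

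For the first part, fix $\tau\in G$ and compute $r_{\tau}(T(w))=\sum_{\sigma}r_{\tau}r_{\sigma}(w)=\sum_{\sigma}r_{\tau\sigma}(w)$, using only additivity of $r_{\tau}$ and the cocycle rule $r_{\tau}\circ r_{\sigma}=r_{\tau\sigma}$. Since $\sigma\mapsto\tau\sigma$ is a bijection of $G$, the right-hand side is $T(w)$. Hence $T(W)\subseteq W^{G}$.

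For the nonvanishing, argue by contradiction and reduce to Lemma~\ref{lem:dedekind}. Fix $w\neq0$ and suppose $T(aw)=0$ for every $a\in L$. By $\sigma$-linearity,
\[
T(aw)=\sum_{\sigma\in G}\sigma(a)\,r_{\sigma}(w),
\]
so $\sum_{\sigma}\sigma(a)w_{\sigma}=0$ for all $a\in L$, where $w_{\sigma}:=r_{\sigma}(w)\in W$. To apply Lemma~\ref{lem:dedekind}, take $\Gamma=L^{\times}$ and view the elements of $G$, restricted to $L^{\times}$, as homomorphisms $L^{\times}\to L^{\times}$. They are pairwise distinct: two automorphisms of $L$ that agree on $L^{\times}$ also agree at $0$, so they are equal. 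The vanishing holds for every $a\in L^{\times}$, so the lemma gives $w_{\sigma}=0$ for all $\sigma$. In particular $w_{1}=r_{1}(w)=w=0$, a contradiction. So some $a$ has $T(aw)\neq0$.

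The final assertion follows at once: if $W\neq0$, pick $w\neq0$ and $a$ with $T(aw)\neq0$; then $T(aw)$ is a nonzero element of $W^{G}$ by the first part.

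The only delicate point is fitting Lemma~\ref{lem:dedekind}, which is stated for characters of a group, to our setting. This requires restricting to $\Gamma=L^{\times}$, where the relation is still available, and checking that the elements of $G$ remain distinct as characters.
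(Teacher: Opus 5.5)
Your proposal is correct and follows the paper's proof essentially step for step. You reindex the sum via $r_{\tau}r_{\sigma}=r_{\tau\sigma}$ to get invariance. You then apply Lemma~\ref{lem:dedekind} to the distinct characters $\sigma|_{L^{\times}}$ to force $w=r_{1}(w)=0$, and you spell out the distinctness check and the final ``in particular'' that the paper leaves implicit.
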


\begin{proof}
For $\tau\in G$, $r_{\tau}(T(w))=\sum_{\sigma}r_{\tau\sigma}(w)=T(w)$, so
$T(W)\subseteq W^{G}$. If $T(aw)=0$ for all $a\in L$, then
$\sum_{\sigma}\sigma(a)\,r_{\sigma}(w)=0$ for all $a\in L^{\times}$; by
Lemma~\ref{lem:dedekind} applied to the distinct characters $\sigma|_{L^{\times}}$ we get
$r_{\sigma}(w)=0$ for every $\sigma$, and $r_{1}=\mathrm{id}$ gives $w=0$.
\end{proof}

\begin{lemma}[Galois descent]\label{lem:galois-descent}
Let $W$ be an $L$-vector space with a $G$-structure and $K=L^{G}$. Then the canonical map
\begin{equation*}
  r\colon L\otimes_{K}W^{G}\longrightarrow W,\qquad a\otimes w\longmapsto aw,
\end{equation*}
is an isomorphism of $L$-vector spaces. Equivalently, every $K$-basis of $W^{G}$ is an
$L$-basis of $W$.
\end{lemma}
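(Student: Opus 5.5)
We prove injectivity and surjectivity of $r$ separately; both come down to Lemma~\ref{lem:dedekind} and Lemma~\ref{lem:trace}. The map $r$ is clearly $L$-linear and well defined, since the multiplication map is $K$-balanced. The equivalent basis formulation follows once $r$ is an isomorphism: a $K$-basis $(w_j)$ of $W^{G}$ gives the $L$-basis $(1\otimes w_j)$ of $L\otimes_K W^{G}$, whose image under $r$ is $(w_j)$.

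\emph{Injectivity.} It suffices to show that any family $w_1,\dots,w_n\in W^{G}$ which is $K$-linearly independent remains $L$-linearly independent in $W$. Suppose some $L$-linear relation among them exists, and choose one $\sum_i a_i w_i=0$ with the minimal number of nonzero coefficients. After rescaling we may assume $a_1=1$. Applying $r_\sigma$ and using $r_\sigma(w_i)=w_i$ gives $\sum_i\sigma(a_i)w_i=0$. Subtracting the original relation produces a relation with fewer nonzero terms, since the first coefficient cancels. By minimality this new relation is trivial, so $\sigma(a_i)=a_i$ for all $\sigma$, hence $a_i\in L^{G}=K$. This contradicts $K$-independence. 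Since $r$ sends a basis of $L\otimes_K W^{G}$ to an $L$-independent family, it is injective.

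\emph{Surjectivity.} This is the step I expect to be the real content. Let $W'=r(L\otimes_K W^{G})=L\cdot W^{G}$, an $L$-subspace of $W$. It is stable under each $r_\sigma$, because $r_\sigma(aw)=\sigma(a)w$ for $w\in W^{G}$. Hence the quotient $W/W'$ inherits a $G$-structure: each $r_\sigma$ descends to a $\sigma$-linear bijection, and the cocycle identities persist.

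If $W/W'\neq0$, Lemma~\ref{lem:trace} applied to $W/W'$ yields some $\bar v\neq 0$ in $W/W'$ with $T(\bar v)\neq 0$ in $(W/W')^{G}$. But $T$ commutes with the projection. Lifting $\bar v$ to $v\in W$, the element $T(v)$ lies in $W^{G}\subseteq W'$, so $T(\bar v)=\overline{T(v)}=0$, a contradiction. Hence $W'=W$.

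The only care needed is checking that the induced maps on the quotient form a $G$-structure and that the trace is compatible with the projection. Both are immediate from $\sigma$-linearity and the $G$-stability of $W'$. Note that the argument uses no finite-dimensionality, which matters later when $W$ is a function field viewed as an $L$-vector space.
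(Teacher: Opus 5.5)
Your proof is correct and matches the paper's own argument essentially step for step. Injectivity comes from a minimal-length relation, shortened by applying $r_\sigma$ and subtracting (after reordering so the normalized coefficient is nonzero). Surjectivity comes from passing to the quotient $W/L\cdot W^{G}$ with its induced $G$-structure, where every trace vanishes, so Lemma~\ref{lem:trace} forces the quotient to be zero.
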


\begin{proof}
\emph{Injectivity.} It suffices to show that $K$-linearly independent elements
$w_{1},\dots,w_{n}\in W^{G}$ remain $L$-linearly independent in $W$. If not, choose a
nontrivial relation $\sum_{i=1}^{n}a_{i}w_{i}=0$ with $n$ minimal; then all $a_{i}\neq 0$,
$n\geq 2$, and after scaling $a_{n}=1$. Applying $r_{\sigma}$ and using
$r_{\sigma}(w_{i})=w_{i}$ gives $\sum_{i}\sigma(a_{i})w_{i}=0$; subtracting,
$\sum_{i=1}^{n-1}\bigl(a_{i}-\sigma(a_{i})\bigr)w_{i}=0$, a shorter relation, hence
trivial by minimality. So $a_{i}=\sigma(a_{i})$ for all $\sigma$, i.e.\ $a_{i}\in K$,
contradicting $K$-independence.

\emph{Surjectivity.} The image $W_{0}=r(L\otimes_{K}W^{G})$ is an $L$-subspace stable
under every $r_{\sigma}$, because $r_{\sigma}(aw)=\sigma(a)w$ for $w\in W^{G}$. Hence the
quotient $\overline{W}=W/W_{0}$ inherits a $G$-structure, and every element of
$\overline{W}$ has trace $\overline{T(w)}=\overline{0}$, since $T(w)\in W^{G}\subseteq
W_{0}$. By Lemma~\ref{lem:trace}, $\overline{W}=0$, i.e.\ $W_{0}=W$.
\end{proof}

\begin{theorem}[Weil's descent criterion for curves]\label{thm:weil}
Let $L$ be a field, $G\leq\Aut(L)$ finite, $K=L^{G}$, and let $X$ be a smooth projective
curve over $L$. Suppose that for every $\sigma\in G$ there is an isomorphism
$f_{\sigma}\colon X^{\sigma}\to X$ of curves over $L$ such that the induced maps on
function fields, $f_{\sigma}^{*}\colon L(X)\to L(X^{\sigma})=L(X)$, define a
$G$-structure:
\begin{equation}\label{eq:cocycle}
  r_{\sigma}:=\bigl(f_{\sigma}^{-1}\bigr)^{*}\ \text{is $\sigma$-linear},
  \qquad r_{\sigma}\circ r_{\tau}=r_{\sigma\tau},\qquad r_{1}=\mathrm{id}.
\end{equation}
Then there is a curve $X_{K}$ over $K$ with $X_{K}\times_{K}L\cong X$.
\end{theorem}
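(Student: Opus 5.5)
The plan is to apply the semilinear descent lemma (Lemma~\ref{lem:galois-descent}) to the function field $W=L(X)$, regarded as an $L$-vector space, with the $G$-structure $(r_{\sigma})$ provided by \eqref{eq:cocycle}. Then we recover $X_{K}$ as the smooth projective model of the fixed field $F:=L(X)^{G}$.

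\emph{Step 1.} Check that $W=L(X)$ with the maps $r_{\sigma}$ satisfies the hypotheses of Definition~\ref{def:G-structure}. Each $r_{\sigma}=(f_{\sigma}^{-1})^{*}$ is a bijection because $f_{\sigma}$ is an isomorphism. It is $\sigma$-linear and satisfies the cocycle identities by hypothesis. In addition, each $r_{\sigma}$ is a \emph{ring} automorphism of $L(X)$, being a pullback of functions. Consequently $F=W^{G}$ is a subfield of $L(X)$, and $F\cap L=L^{G}=K$.

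\emph{Step 2.} Apply Lemma~\ref{lem:galois-descent} to obtain $L\otimes_{K}F\cong L(X)$ via multiplication. Since $L/K$ is finite Galois, $L\otimes_{K}F$ is already a field, namely the compositum $LF=L(X)$. From this we deduce three facts.
\begin{enumerate}[label=(\roman*)]
\item $F$ is finitely generated of transcendence degree $1$ over $K$. To see this, take a $K$-basis argument: a transcendence basis and generators of $L(X)$ over $L$ can be written as $L$-combinations of elements of $F$, so finitely many elements of $F$ generate.
\item $K$ is algebraically closed in $F$, i.e.\ $F/K$ is regular. Indeed, if $\alpha\in F$ were algebraic over $K$ and not in $K$, then $L\otimes_{K}K(\alpha)\subseteq L\otimes_{K}F$. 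But $L(X)$ is a regular extension of $L$, since $X$ is geometrically connected, so $\alpha$ would lie in $L(X)$ and be algebraic over $L$. That forces $\alpha\in L\cap F=K$.
\item $F\otimes_{K}L\cong L(X)$ as $L$-algebras.
\end{enumerate}
Because we are in characteristic zero, regularity gives geometric integrality.

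\emph{Step 3.} Let $X_{K}$ be the unique smooth projective curve over $K$ with function field $F$. This is the curve-function-field equivalence over an arbitrary characteristic-zero field, the $K$-analogue of Theorem~\ref{thm:dictionary}. Then $X_{K}\times_{K}L$ is a smooth projective curve over $L$: smoothness is preserved under separable base change, and it is geometrically connected by (ii). Its function field is $F\otimes_{K}L\cong L(X)$. Two smooth projective curves over $L$ with isomorphic function fields are isomorphic, so $X_{K}\times_{K}L\cong X$.

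The main obstacle is Step~2, specifically showing that $F$ is a function field \emph{of the right kind}. Transcendence degree and finite generation must come down from $L(X)$, and the constant field of $F$ must be exactly $K$. I would handle this through the regularity argument above. Conceptually it is the same input as injectivity in Lemma~\ref{lem:galois-descent}: $K$-independent elements of $F$ stay $L$-independent in $L(X)$.
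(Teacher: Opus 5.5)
Your proposal is correct and follows the paper's proof: apply Lemma~\ref{lem:galois-descent} to $W=L(X)$, read off from $L\otimes_{K}W^{G}\cong L(X)$ that $W^{G}$ is finitely generated of transcendence degree $1$ over $K$, take its smooth projective model, and compare function fields after base change to $L$. Your explicit check that $K$ is algebraically closed in $W^{G}$ (so that $X_{K}$ is geometrically connected) spells out a point the paper leaves implicit. One small correction: $L\otimes_{K}W^{G}$ is a field because it is isomorphic to $L(X)$, not because $L/K$ is Galois.
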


\begin{proof}
Put $W=L(X)$ with the $G$-structure \eqref{eq:cocycle}, so each $r_{\sigma}$ is a
$\sigma$-linear field automorphism. Then $V:=W^{G}$ is a subfield of $W$ containing
$K$, and Lemma~\ref{lem:galois-descent} gives $L\otimes_{K}V\xrightarrow{\ \sim\ }W$. In
particular $V$ has transcendence degree $1$ over $K$; it is finitely generated over $K$
because $W$ is finitely generated over $L$ and, by the isomorphism, generators of $W$ as
an $L$-field can be replaced by finitely many elements of $V$. Let $X_{K}$ be the smooth
projective model of $V/K$, which exists and is unique. Then $K(X_{K})=V$ and
$L(X_{K}\times_{K}L)=L\otimes_{K}V=L(X)$, whence $X_{K}\times_{K}L\cong X$, since for
smooth projective curves an isomorphism of function fields over $L$ comes from a unique
isomorphism of curves.
\end{proof}

\begin{remark}\label{rem:cocycle-twist}
Condition \eqref{eq:cocycle} is the cocycle condition of Weil descent, and the twist is
essential: the naive requirement $f_{\sigma\tau}=f_{\sigma}\circ f_{\tau}$ is not
$\sigma$-linear-compatible. In terms of the maps $f_{\sigma}$ themselves the condition
reads $f_{\sigma\tau}=f_{\sigma}\circ f_{\tau}^{\sigma}$, where $f_{\tau}^{\sigma}$ is the
$\sigma$-conjugate of $f_{\tau}$. Verifying it is the crux of every descent argument; the
point of Theorem~\ref{thm:rigid-descent} below is that in the rigid case it is
\emph{automatic}.
\end{remark}

\subsection{Fields of moduli}\label{ssec:moduli-field}

\begin{definition}\label{def:moduli-field}
Let $X$ be a curve over $\C$. For $\sigma\in\Aut(\C)$ let $X^{\sigma}$ be the curve
obtained by applying $\sigma$ to the coefficients of defining equations. Set
\begin{equation*}
  U(X)=\{\sigma\in\Aut(\C): X^{\sigma}\cong X \text{ over }\C\},
  \qquad
  M(X)=\C^{U(X)} .
\end{equation*}
$U(X)$ is a subgroup of $\Aut(\C)$ and $M(X)$ is the \emph{field of moduli} of $X$.
Similarly, for a covering $t\colon X\to\Pp^{1}_{\C}$ let $U(X,t)$ consist of those
$\sigma$ for which there is an isomorphism $f_{\sigma}\colon X^{\sigma}\to X$ with
\begin{equation}\label{eq:covering-conjugate}
  \begin{tikzcd}[column sep=2.6em,row sep=2.0em]
    X^{\sigma} \arrow[r,"f_{\sigma}"] \arrow[d,"t^{\sigma}"'] & X \arrow[d,"t"]\\
    (\Pp^{1}_{\C})^{\sigma} \arrow[r,"\mathrm{Proj}(\sigma)"'] & \Pp^{1}_{\C}
  \end{tikzcd}
\end{equation}
commutative, where $\mathrm{Proj}(\sigma)$ is the automorphism induced by $\sigma$ on
$\Pp^{1}_{\C}=\operatorname{Proj}\C[T_{0},T_{1}]$. The \emph{field of moduli of the
covering} is $M(X,t)=\C^{U(X,t)}$.
\end{definition}

Every field of definition contains the field of moduli, and $M(X)\subseteq M(X,t)$ since
$U(X,t)\subseteq U(X)$. The converse---is the field of moduli a field of definition?---is
the central difficulty, and the answer is: not always, but always up to a finite
extension (Theorem~\ref{thm:moduli-covering}), and always when the covering has no
automorphisms (Theorem~\ref{thm:rigid-descent}, which builds on the former).

\begin{theorem}[Field of moduli of a covering; {\cite[Thm.~2.2]{Kock2004}}]
\label{thm:moduli-covering}
Let $X$ be a curve over $\C$ and $t\colon X\to\Pp^{1}_{\C}$ a covering. Then $X$ and $t$
are defined over a finite extension of $M(X,t)$. If $t$ is a Galois covering, they are
defined over $M(X,t)$ itself.
\end{theorem}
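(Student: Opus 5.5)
We follow Köck's strategy: first descend the function-field extension $\C(X)/\C(t)$ to a finitely generated field, then use a finite-orbit argument together with Lemma~\ref{lem:index} and Weil's criterion (Theorem~\ref{thm:weil}).

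\emph{Step 1: a finite $U(X,t)$-stable set.} Let $U=U(X,t)$ and $K_{0}=M(X,t)=\C^{U}$. Under Theorem~\ref{thm:dictionary}, the covering $t$ corresponds to a finite extension $F=\C(X)\supseteq\C(t)$ of degree $d$. The goal is a finite extension $K\supseteq K_{0}$ such that $F$ descends to an extension $F_{K}/K(t)$. The Galois closure $\widetilde F/\C(t)$ corresponds to the Galois closure $\tilde t\colon\widetilde X\to\Pp^{1}$ of $t$. For $\sigma\in U$, any $f_{\sigma}$ as in \eqref{eq:covering-conjugate} lifts to the Galois closures. Hence $U\subseteq U(\widetilde X,\tilde t)$ up to the choice of lift.

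\emph{Step 2: the Galois case.} Suppose first that $t$ is Galois with group $G$. For each $\sigma\in U$ the isomorphisms $f_{\sigma}$ over $\Pp^{1}$ form a $G$-torsor. Choose finitely many algebraic data defining $(X,t,G)$ over a field $L$ finitely generated over $K_{0}$. By a specialization argument, one may take $L$ finite over $K_{0}$: the defining coefficients satisfy equations over $K_{0}$ whose solution set is $U$-stable. Then every $\sigma\in U$ acts on the finite set of points of $X$ over a fixed $K_{0}$-rational point of $\Pp^{1}$. Here we also use that $K_{0}$ is the fixed field of $U$ and that $U$ is closed by Lemma~\ref{lem:index}.

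Now rigidify: pick a point $P\in X$ over a $K_{0}$-rational unramified point $b$. Normalize $f_{\sigma}$ so that it sends $P^{\sigma}$ to $P$. This is possible and unique, because $G$ acts simply transitively on the fibre over $b$. Uniqueness forces the cocycle identity $f_{\sigma\tau}=f_{\sigma}\circ f_{\tau}^{\sigma}$ of Remark~\ref{rem:cocycle-twist}. Restricting to the finite quotient $\Gal(L/K_{0})$, Theorem~\ref{thm:weil} descends $(X,t)$ to $K_{0}$.

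\emph{Step 3: general case.} In general, apply Step 2 to the Galois closure $\tilde t$, which descends over $M(\widetilde X,\tilde t)$. $X$ is the quotient $\widetilde X/H$ for a subgroup $H\leq G$. The Galois conjugates of $H$ under the descended action form a finite set. The stabilizer of $H$ therefore has finite index and cuts out a finite extension $K$. Over $K$, the descent of $\widetilde X$ makes $H$ stable, so the quotient $\widetilde X/H=X$ and the map $t$ descend to $K$. Finiteness of $[K:K_{0}]$ follows from Lemma~\ref{lem:index}.

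\emph{Main obstacle.} The hard point is reducing from the infinite group $U\leq\Aut(\C)$ to a finite Galois group, as Theorem~\ref{thm:weil} requires. The plan is to exhibit a finitely generated field of definition $L$ and show that the $U$-orbit of the relevant data is finite. Lemma~\ref{lem:aut-C} and Lemma~\ref{lem:index} then identify $K_{0}$ inside $L$ up to a finite extension. Two further details need care:
\begin{itemize}
\item that the normalized $f_{\sigma}$ depend only on $\sigma|_{L}$;
\item that a $K_{0}$-rational unramified base point $b$ exists, which it does because $K_{0}$ is infinite.
\end{itemize}
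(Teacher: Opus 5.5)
Your proposal has a genuine gap at the point you yourself call the main obstacle: you never obtain, before any descent, a model of $(X,t)$ over a finite extension $L$ of $K_{0}=M(X,t)$, nor the fact that every element of $\Gal(L/K_{0})$ is the restriction of an element of $U$.

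The ``specialization argument'' in Step~2 is not an argument. That some equations over $K_{0}$ have a $U$-stable solution set does not produce a solution algebraic over $K_{0}$ whose covering is isomorphic to $(X,t)$. To make specialization work you would have to spread $(X,t)$ out over a variety over $\overline{K_{0}}$ and prove that the resulting family of coverings is isotrivial. That is a substantial input you do not have.

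The appeal to Lemma~\ref{lem:index} for closedness of $U$ is circular. The closedness criterion needs a finite extension $K/K_{0}$ with $\Aut(\C/K)\subseteq U$, which is exactly a field of definition finite over $K_{0}$, i.e.\ the first assertion of the theorem. This is why the paper's Theorem~\ref{thm:rigid-descent} takes the present theorem as input rather than the reverse. Step~2 needs both facts to invoke Theorem~\ref{thm:weil} for $\Gal(L/K_{0})$, and Step~3 rests on Step~2. So you have located the obstacle correctly but not removed it.

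The missing idea is to avoid reducing the infinite group $U$ to a finite one, and instead write down $U$-invariant equations directly. The paper does this as follows.
\begin{itemize}
\item Take $Q\in\Pp^{1}(\Q)\smallsetminus\Br(t)$ and $P\in t^{-1}(Q)$.
\item By Riemann--Roch, choose $z$ whose only pole is at $P$, of minimal order $m$.
\item Show $\C(X)=\C(t,z)$: the map to the curve with function field $\C(t,z)$ is both unramified and totally ramified at $P$.
\item Normalize $z$ uniquely by requiring its Laurent expansion in $t-Q$ to have leading coefficient $1$ and constant term $0$.
\end{itemize}
Your base-point rigidification is then exactly the right extra ingredient. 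The $\sigma$ for which $f_{\sigma}$ can be chosen with $f_{\sigma}(P^{\sigma})=P$ form the stabilizer of $[P]$ in the finite set $t^{-1}(Q)/\Aut(X,t)$, so this subgroup has finite index in $U$. The resulting unique $f_{\sigma}$ give a semilinear action that fixes $t$, and by uniqueness of the normalization it also fixes $z$. Hence the minimal polynomial of $z$ over $\C(t)$ has coefficients in the fixed field of that subgroup, which is finite over $K_{0}$ by Lemma~\ref{lem:index}. In the Galois case your simply-transitive-fibre observation shows the subgroup is all of $U$, so the field is $K_{0}$ itself.

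This route needs no Weil descent and no passage to a finite quotient. It also handles the non-Galois case directly, without your Step~3. Step~3 would in any case need a separate descent of the quotient $\widetilde X/H$, and only yields a field finite over $M(\widetilde X,\tilde t)\subseteq K_{0}$, which must then be composited with $K_{0}$.
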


\begin{proof}
The strategy: use Riemann--Roch to manufacture a generator $z$ of the function field over
$\C(t)$, normalized so rigidly---by its pole, its leading coefficient, and its constant
term---that the semilinear action coming from the field of moduli has no choice but to fix
it; the coefficients of its minimal polynomial then land in the desired fixed field.
Choose a $\Q$-rational point $Q\in\Pp^{1}(\Q)$ with $Q\notin\Br(t)$ and a point
$P\in t^{-1}(Q)$. Applying Riemann--Roch to the divisor $(g+1)P$, where $g=\genus(X)$,
gives $\ell\bigl((g+1)P\bigr)\geq g+2-g=2$, so there is a non-constant meromorphic
function $z\in\C(X)$ whose only pole is at $P$. Choose such a $z$ with the pole order
$m=-\ord_{P}(z)$ minimal.

We claim $\C(X)=\C(t,z)$. Indeed, let $Y$ be the curve with function field $\C(t,z)$, so
that $t$ and $z$ factor through a morphism $\varphi\colon X\to Y$. Since $Q\notin\Br(t)$,
$\varphi$ is unramified at $P$; but $P$ is the unique pole of $z$, so $z$, viewed on $Y$,
has a unique pole below $P$ and $\varphi$ is totally ramified at $P$. An index that is
simultaneously $1$ and $\deg\varphi$ forces $\deg\varphi=1$, proving the claim.

Now normalize $z$. The space
$V=\{h\in\C(X):\ord_{P}(h)\geq -m,\ \ord_{R}(h)\geq0\ \forall R\neq P\}$
equals $\C\oplus\C z$ by minimality of $m$: two elements with pole order exactly $m$
differ, after scaling, by an element of smaller pole order, hence by a constant. Since
$Q$ is unramified, $t-Q$ is a local parameter at $P$, and there is a \emph{unique}
$z\in V$ whose Laurent expansion in $t-Q$ has leading coefficient $1$ in degree $-m$ and
vanishing constant term. Fix this $z$.

Let $U(X,t,P)\subseteq U(X,t)$ be the subgroup of those $\sigma$ for which $f_{\sigma}$ can
be chosen with $f_{\sigma}(P^{\sigma})=P$. Since $\Aut(X,t)$ acts freely on the fibre
$t^{-1}(Q)$ (an automorphism fixing a point of an unramified fibre and commuting with $t$
is the identity), such $f_{\sigma}$ is unique, and $\sigma\mapsto f_{\sigma}^{*}$ is a
semilinear action of $U(X,t,P)$ on $\C(X)$ fixing $t$. The subgroup $U(X,t,P)$ is the
stabilizer of $[P]$ for the action of $U(X,t)$ on the finite set
$t^{-1}(Q)/\Aut(X,t)$, hence has finite index in $U(X,t)$; if $t$ is Galois then
$\Aut(X,t)$ is transitive on $t^{-1}(Q)$ and $U(X,t,P)=U(X,t)$.

The three normalizing conditions defining $z$ are preserved by this action, so $z$---and
hence the minimal polynomial of $z$ over $\C(t)$---is invariant under $U(X,t,P)$. Its
coefficients therefore lie in $\C^{U(X,t,P)}=:k$, and $\C(X)=\C(t,z)$ is defined over
$k$, as is $t$. By Lemma~\ref{lem:index}, $k$ is a finite extension of
$\C^{U(X,t)}=M(X,t)$, and $k=M(X,t)$ in the Galois case.
\end{proof}

\begin{theorem}[Rigid descent]\label{thm:rigid-descent}
Let $t\colon X\to\Pp^{1}_{\C}$ be a covering whose automorphism group
\begin{equation*}
  \Aut(X,t)=\{f\in\Aut(X): t\circ f=t\}
\end{equation*}
is trivial. Then $X$ and $t$ are defined over the field of moduli $M(X,t)$.
\end{theorem}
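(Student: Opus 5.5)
The plan is to reduce the rigid case to finite Galois descent via Theorem~\ref{thm:moduli-covering} and Weil's criterion (Theorem~\ref{thm:weil}). This is the closedness argument announced in the introduction. Write $U=U(X,t)$ and $M=M(X,t)$.

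First, I use triviality of $\Aut(X,t)$ to make the isomorphisms $f_{\sigma}\colon X^{\sigma}\to X$ of \eqref{eq:covering-conjugate} unique for every $\sigma\in U$. If $f,f'$ both work, then $f'\circ f^{-1}\in\Aut(X,t)=1$. Uniqueness forces the twisted cocycle identity $f_{\sigma\tau}=f_{\sigma}\circ f_{\tau}^{\sigma}$ of Remark~\ref{rem:cocycle-twist}, because both sides are isomorphisms $X^{\sigma\tau}\to X$ compatible with $t$. So $r_{\sigma}=(f_{\sigma}^{-1})^{*}$ defines a semilinear action of $U$ on $\C(X)$ fixing $t$, and no choices are made anywhere.

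Next, by Theorem~\ref{thm:moduli-covering} (in the rigid case the subgroup $U(X,t,P)$ is all of $U$ anyway), $X$ and $t$ are defined over some field $k$ finite over $M$. Concretely, there is a model $(X_{k},t_{k})$ with $\C(X)=\C\otimes_{k}k(X_{k})$. Enlarge $k$ to a finite extension $L$ of $M$ that is Galois over $M$; this is possible since $k/M$ is finite and we are in characteristic zero. The key step, and the main obstacle, is to show that $\Aut(\C/L)\subseteq U$ and that the action of $\Aut(\C/L)$ on $\C(X)$ is the obvious one: it acts on the $\C$-factor of $\C\otimes_{L}L(X_{L})$. For $\sigma$ fixing $L$, we have $X^{\sigma}=X$ canonically, and the identity is a valid $f_{\sigma}$; by uniqueness $f_{\sigma}=\mathrm{id}$. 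Lemma~\ref{lem:index} then gives that $U$ is closed, $U=\Aut(\C/M)$. Moreover, $\Aut(\C/L)$ is normal in $U$, because $L/M$ is Galois, and $U/\Aut(\C/L)\cong\Gal(L/M)$ by restriction, using Lemma~\ref{lem:aut-C}.

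Then I descend the action. For $\sigma\in U$, the map $r_{\sigma}$ sends the $L$-subfield $L(X_{L})\subset\C(X)$ to itself. To see this, note that $r_{\sigma}$ commutes with the action of $\Aut(\C/L)$ by normality and the cocycle identity, and $L(X_{L})$ should be recovered as the $\Aut(\C/L)$-invariants of $\C(X)$. That recovery is a Lemma~\ref{lem:aut-C}-type statement applied coefficientwise to a basis of $L(X_{L})$ over $L(t)$: the function field is $\C(t)\otimes_{L(t)}L(X_{L})$, and invariants of $\C(t)$ under $\Aut(\C/L)$ are $L(t)$. Since $r_{\sigma}$ is trivial on $\Aut(\C/L)$, we obtain a $G$-structure of $G=\Gal(L/M)$ on $W=L(X_{L})$ fixing $t$. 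Theorem~\ref{thm:weil} then yields $X_{M}$ over $M$ with $X_{M}\times_{M}L\cong X_{L}$, and hence $X_{M}\times_{M}\C\cong X$. Because $t$ is $G$-invariant, it lies in $W^{G}=M(X_{M})$, so $t$ descends too. The delicate points I would double-check are the invariant computation $\C(X)^{\Aut(\C/L)}=L(X_{L})$ and the compatibility of $r_{\sigma}$ with the $\Aut(\C/L)$-action.
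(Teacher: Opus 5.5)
Your proof is correct and follows essentially the same route as the paper. Closedness of $U$ comes from Theorem~\ref{thm:moduli-covering} and Lemma~\ref{lem:index}; you then pass to a finite Galois extension $L/M$, use uniqueness of the $f_{\sigma}$ (from $\Aut(X,t)=1$) to get both the cocycle condition and the descent of the datum to $L$, and finish with Weil's finite descent. Your identification $L(X_{L})=\C(X)^{\Aut(\C/L)}$, together with normality of $\Aut(\C/L)$ in $U$, is just the function-field form of the paper's steps (a)--(c), and that invariant computation is sound because $\C^{\Aut(\C/L)}=L$.

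Do drop the parenthetical claim that $U(X,t,P)=U$ in the rigid case, because it is false. That equality is the Galois situation ($\Aut(X,t)$ transitive on $t^{-1}(Q)$). For rigid $t$, $U(X,t,P)$ is the stabilizer of $P$ under $\sigma\mapsto f_{\sigma}(P^{\sigma})$ and is typically proper; were it all of $U$, Theorem~\ref{thm:moduli-covering} alone would finish the proof. Nothing you do afterwards depends on it.
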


\begin{proof}
Write $U=U(X,t)$ and $M=\C^{U}=M(X,t)$. The group $U$ is a subgroup of the enormous group
$\Aut(\C)$, so finite Galois descent does not apply to it directly; the proof proceeds in
three steps, reducing to the finite case.

\emph{Step 1: $U$ is closed, i.e.\ $U=\Aut(\C/M)$.} By
Theorem~\ref{thm:moduli-covering}---whose proof is independent of the present
theorem---the pair $(X,t)$ is defined over some \emph{finite} extension $L$ of $M$ inside
$\C$. Fix a model $(X_{L},t_{L})$ over $L$. Every $\sigma\in\Aut(\C/L)$ then fixes the
defining equations of the model, so $(X^{\sigma},t^{\sigma})\cong(X,t)$ canonically and
$\Aut(\C/L)\subseteq U$. By the closedness criterion of Lemma~\ref{lem:index}, $U$ is
closed: $U=\Aut(\C/M)$. In particular, \emph{every} automorphism of $\C$ fixing $M$
pointwise lies in $U$.

\emph{Step 2: reduction to a finite Galois group.} The idea of this step: the descent
datum a priori lives over the huge group $\Aut(\C)$, but the covering itself lives over a
finite extension, so the datum should too---uniqueness of the comparison isomorphisms
forces their coefficients down into that finite extension, after which only a finite
Galois group is in play. Concretely: let $L'/M$ be the Galois closure of
$L/M$ inside $\C$, a finite Galois extension with group $G=\Gal(L'/M)$, and let
$X'=X_{L}\times_{L}L'$, $t'=t_{L}\times_{L}L'$, a model of $(X,t)$ over $L'$. Fix
$\sigma\in G$ and extend it to $\tilde\sigma\in\Aut(\C)$ (Lemma~\ref{lem:aut-C}); by
Step 1, $\tilde\sigma\in U$, so there is an isomorphism
$f_{\tilde\sigma}\colon X^{\tilde\sigma}\to X$ over $\C$ compatible with $t$ as in
\eqref{eq:covering-conjugate}, and since $\tilde\sigma$ preserves $L'$ ($L'/M$ being
normal), $(X')^{\sigma}$ is again a curve over $L'$. The isomorphism is in fact defined
over $L'$, in three short steps: (a) by the uniqueness proved in Step 3 below,
$f_{\tilde\sigma}$ is invariant under conjugation by $\Aut(\C/L')$, which fixes the
models and their conjugates; (b) its coefficients therefore lie in the fixed field
$\C^{\Aut(\C/L')}$, the algebraic closure of $L'$ in $\C$ (Lemma~\ref{lem:aut-C})---in
particular they are \emph{algebraic} over $L'$; (c) an algebraic element fixed by
$\Gal(\overline{L'}/L')$ lies in $L'$, and every element of that Galois group is the
restriction of an automorphism of $\C$ over $L'$ (Lemma~\ref{lem:aut-C} again), so the
coefficients lie in $L'$. Thus for each $\sigma\in G$ there is a unique isomorphism
$f_{\sigma}\colon (X')^{\sigma}\to X'$ over $L'$ compatible with $t'$.

\emph{Step 3: uniqueness, cocycle condition, and finite descent.} The isomorphism
$f_{\sigma}$ is unique: if $f_{\sigma}'$ were another, then
$f_{\sigma}^{-1}\circ f_{\sigma}'\in\Aut\bigl((X')^{\sigma},(t')^{\sigma}\bigr)$, which is
trivial because $\Aut(X,t)$ is trivial and triviality is preserved under base change and
conjugation. Uniqueness forces the cocycle condition: both $f_{\sigma\tau}$ and
$f_{\sigma}\circ f_{\tau}^{\sigma}$ fit into the diagram
\eqref{eq:covering-conjugate} for $\sigma\tau$, hence coincide, which is
\eqref{eq:cocycle} for the finite group $G=\Gal(L'/M)$ acting on the function field
$W=L'(X')$. Theorem~\ref{thm:weil}---finite Galois descent, fully proved
above---now produces a curve $X_{M}$ over $M=(L')^{G}$ with $X_{M}\times_{M}L'\cong X'$;
and since each $f_{\sigma}$ commutes with $t'$, the element $t'\in W$ is fixed by the
semilinear action, hence lies in the descended function field $W^{G}=M(X_{M})$, so $t$
descends to $M$ as well.
\end{proof}

\begin{remark}\label{rem:rigid-descent-structure}
The logical order matters: Theorem~\ref{thm:moduli-covering} (finite extension,
unconditional) is proved first and is an input to Theorem~\ref{thm:rigid-descent}
(descent to $M$ itself, under rigidity), through the closedness of $U(X,t)$ in Step 1.
Without some such input, descent along the infinite group $\Aut(\C)$ is not covered by
Lemma~\ref{lem:galois-descent}, whose proof uses the finiteness of $G$ in an essential way
(the trace of Lemma~\ref{lem:trace} is a finite sum). This is the standard subtlety in
``field of moduli versus field of definition'' arguments; see
\cite{Weil1956,JonesWolfart2016}.
\end{remark}

\begin{remark}\label{rem:moduli-genus01}
For the curve alone the situation is better in low genus, though it requires care. A curve
of genus $0$ is a smooth conic; its field of moduli is a field of definition, but the
conic need not be $\Pp^{1}$ over that field---it may be a nonsplit conic, split by a
quadratic extension. A curve $X$ of genus $1$ has a $j$-invariant satisfying
$j(X^{\sigma})=\sigma(j)$, so $M(X)=\Q(j)$ by Lemma~\ref{lem:aut-C}; but here a second
subtlety intervenes, and it is the one most often elided. As an \emph{unpointed} genus-one
curve, $X$ is a torsor under its Jacobian $E$, and it need not carry a rational point over
$M(X)$: the elliptic curve with $j$-invariant $j$ realizing the standard Weierstrass model
over $\Q(j)$ is $\mathrm{Jac}(X)=E$, not $X$ itself. Thus $M(X)=\Q(j)$ \emph{is} a field of
definition of $X$ as a curve (genus one is not obstructed), but the specific representative
one writes down over $\Q(j)$ is the Weierstrass model of $\mathrm{Jac}(X)$; $X$ is a
possibly-nontrivial element of the Weil--Ch\^atelet group $H^{1}(\Q(j),E)$, and coincides
with its Jacobian only when it has a rational point. The pointed curve $(X,O)$, by contrast,
is always the Weierstrass curve over $\Q(j)$. For $g\geq 2$ one has $\#\Aut(X)\leq 84(g-1)$ by Hurwitz's theorem, so the
descent obstruction is at least finite in nature; but genuine counterexamples to
``field of moduli $=$ field of definition'' exist for curves and for coverings, which is
why Theorem~\ref{thm:moduli-covering} claims only a finite extension. Theorem
\ref{thm:rigid-descent} isolates the hypothesis---triviality of $\Aut(X,t)$---under which
the obstruction vanishes identically.
\end{remark}
\section{Proof of Belyi's theorem}\label{sec:proof}

We now prove Theorem~\ref{thm:belyi-intro}. The two implications have entirely different
characters. The implication ``three branch points $\Rightarrow$ defined over $\Qbar$'' is
a rigidity-and-descent argument: it uses the finiteness of
Theorem~\ref{thm:classification} and the machinery of Section~\ref{sec:descent}. The
converse is Belyi's algorithm: an explicit, constructive reduction of the branch locus,
and the source of all the explicit maps studied later.

\subsection{Three points imply arithmetic}\label{ssec:easy}

\begin{proposition}[Finiteness]\label{prop:finiteness}
Let $S\subset\Pp^{1}(\C)$ be finite and $d\geq1$. Then there are only finitely many
isomorphism classes of pairs $(X,t)$ with $X$ a curve over $\C$ and
$t\colon X\to\Pp^{1}_{\C}$ a covering of degree $d$ with $\Br(t)\subseteq S$.
\end{proposition}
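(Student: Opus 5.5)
The argument is a direct generalization of the proof of Theorem~\ref{thm:classification}, with $\{0,1,\infty\}$ replaced by an arbitrary finite set $S=\{s_{1},\dots,s_{r}\}$.

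\emph{Degenerate case.} If $S=\emptyset$, then $\Pp^{1}(\C)\smallsetminus S$ is simply connected. Any unramified degree-$d$ covering by a connected $X$ is therefore an isomorphism, so $d=1$. A single class is at most finite, so there is nothing to prove. We may thus assume $r\geq1$.

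\emph{Step 1 (restrict to the complement).} Fix a base point $\ast\notin S$. Given $(X,t)$ with $\Br(t)\subseteq S$, restrict $t$ as in \eqref{eq:unramified-restriction}. This gives a connected, unramified topological covering of degree $d$ of $\Pp^{1}(\C)\smallsetminus S$. By \eqref{eq:pi1} the fundamental group is free on $\gamma_{1},\dots,\gamma_{r-1}$.

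\emph{Step 2 (encode by monodromy).} Label the fibre over $\ast$ by $\{1,\dots,d\}$. The covering is then encoded by the monodromy homomorphism $\rho\colon\pi_{1}\to\Sym_{d}$, which is determined by the tuple $(\rho(\gamma_{1}),\dots,\rho(\gamma_{r-1}))\in\Sym_{d}^{r-1}$. Transitivity of $\rho$ reflects the connectedness of $X$. Relabelling the fibre changes the tuple by simultaneous conjugation. By the classification of coverings, isomorphic pointed-free coverings correspond to conjugate representations. Hence $(X,t)\mapsto[\rho]$ is a well-defined map from isomorphism classes of pairs to $\Sym_{d}^{r-1}/\Sym_{d}$.

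\emph{Step 3 (injectivity).} I must check that $[\rho]$ determines $(X,t)$ up to isomorphism. The monodromy class determines the topological covering up to isomorphism over $\Pp^{1}(\C)\smallsetminus S$. By the last sentence of Theorem~\ref{thm:dictionary}, this covering extends uniquely to a covering of curves branched only in $S$. Concretely, an isomorphism of the punctured coverings commuting with $t$ is holomorphic, because $t$ is a local biholomorphism there. It then extends across the punctures by the removable singularity theorem, and the extension is compatible with $t$. So the map of Step 2 is injective.

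\emph{Step 4 (count).} Injectivity bounds the number of isomorphism classes by the number of conjugacy classes of tuples, which is at most $(d!)^{r-1}$. This gives the finiteness.

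The only step needing real care is Step 3: making sure that isomorphism of the compactified pairs is detected by monodromy conjugacy. I would handle it exactly via the uniqueness clause in Theorem~\ref{thm:dictionary} together with removable singularities, as in the proof of Theorem~\ref{thm:classification}.
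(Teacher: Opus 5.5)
Your proof is correct and follows essentially the same route as the paper. Both restrict to $\Pp^{1}(\C)\smallsetminus S$ and use that its fundamental group is free of rank $|S|-1$, so that degree-$d$ coverings are encoded by at most $(d!)^{|S|-1}$ conjugacy classes of tuples, and both recover $(X,t)$ from the topological covering via holomorphy of the covering isomorphism, removable singularities and Theorem~\ref{thm:dictionary}. Your separate treatment of $S=\emptyset$ is a small addition: the paper's bound $(d!)^{|S|-1}$ passes over that case silently.
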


\begin{proof}
By Theorem~\ref{thm:dictionary} the pair $(X,t)$ is determined up to isomorphism by the
topological covering \eqref{eq:unramified-restriction}: two pairs whose restrictions are
isomorphic as coverings of $\Pp^{1}(\C)\smallsetminus S$ have a biholomorphism between the
punctured surfaces commuting with the projections, which extends over the punctures by
Riemann's removable singularity theorem and is algebraic by
Theorem~\ref{thm:dictionary}. Degree-$d$ coverings of
$\Pp^{1}(\C)\smallsetminus S$ correspond to index-$d$ subgroups of
$\pi_{1}(\Pp^{1}(\C)\smallsetminus S)$ up to conjugacy; by \eqref{eq:pi1} that group is
free of rank $|S|-1$, in particular finitely generated, and a finitely generated group has
only finitely many subgroups of each finite index (a subgroup of index $d$ is the
stabilizer of a point in a transitive action on $d$ letters, of which there are at most
$(d!)^{|S|-1}$).
\end{proof}

\begin{corollary}\label{cor:moduli-number-field}
Let $t\colon X\to\Pp^{1}_{\C}$ be a covering whose critical values are $K$-rational for a
subfield $K\subseteq\C$. Then $M(X,t)$ is contained in a finite extension of $K$. In
particular, if $\Br(t)\subseteq\{0,1,\infty\}$ then $M(X,t)$ is a number field.
\end{corollary}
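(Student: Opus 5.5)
The plan is to let the group $\Aut(\C/K)$ act on a finite set of isomorphism classes of coverings, and then convert the resulting finite orbit into a finite field extension using Lemma~\ref{lem:index}. Put $S=\Br(t)$ and $d=\deg t$.

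\emph{Step 1: the action.} Take $\sigma\in\Aut(\C/K)$. Conjugating $(X,t)$ by $\sigma$ gives a covering $t^{\sigma}\colon X^{\sigma}\to\Pp^{1}_{\C}$ of the same degree $d$. Its branch locus is $\sigma(S)$, since critical values are cut out by polynomial conditions on the coefficients. Each point of $S$ is $K$-rational, so $\sigma(S)=S$.

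Conjugation also respects isomorphism of pairs: if $f\colon X_{1}\to X_{2}$ satisfies $t_{2}\circ f=t_{1}$, then $f^{\sigma}$ is an isomorphism between the conjugate pairs. Hence $\Aut(\C/K)$ acts on the set $\mathcal{C}_{d,S}$ of isomorphism classes of degree-$d$ coverings branched inside $S$. By Proposition~\ref{prop:finiteness}, $\mathcal{C}_{d,S}$ is finite.

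\emph{Step 2: a finite-index subgroup.} Let $V\leq\Aut(\C/K)$ be the stabilizer of the class $[(X,t)]$. Since the orbit lies in a finite set, $V$ has finite index in $U:=\Aut(\C/K)$. Comparing with Definition~\ref{def:moduli-field}, $V=U(X,t)\cap\Aut(\C/K)\subseteq U(X,t)$. Taking fixed fields reverses inclusions, so
\[
M(X,t)=\C^{U(X,t)}\subseteq\C^{V}.
\]

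\emph{Step 3: from finite index to finite degree.} Lemma~\ref{lem:index} applied to $V\leq U$ shows that $\C^{V}/\C^{U}$ is a finite extension. It remains to prove $\C^{U}=K$ exactly, and I do not want to rely on the "in general" clause of Lemma~\ref{lem:aut-C}. I will argue directly from its first assertion, that every automorphism of a subfield of $\C$ extends to $\C$.

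If $\alpha\notin K$ is algebraic over $K$, its minimal polynomial has another root $\alpha'\neq\alpha$. The $K$-isomorphism $K(\alpha)\to K(\alpha')$ sending $\alpha\mapsto\alpha'$ extends first to an automorphism of the algebraic closure of $K$ in $\C$ (normal closure), and then to an automorphism of $\C$. That automorphism fixes $K$ and moves $\alpha$.

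If $\alpha$ is transcendental over $K$, the $K$-automorphism $\alpha\mapsto\alpha+1$ of $K(\alpha)$ extends to an automorphism of $\C$ that fixes $K$ and moves $\alpha$.

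Hence $\C^{U}=K$. It follows that $\C^{V}$ is a finite extension of $K$ containing $M(X,t)$.

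\emph{Step 4: the special case.} Taking $K=\Q$ with $\Br(t)\subseteq\{0,1,\infty\}\subset\Pp^{1}(\Q)$, the field $M(X,t)$ lies in a finite extension of $\Q$. It is therefore itself a number field.

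The main point needing care is Step 3: pinning down $\C^{\Aut(\C/K)}=K$ (rather than merely an algebraic extension of $K$) and checking that the index statement of Lemma~\ref{lem:index} applies without a normality or closedness hypothesis on $V$. The well-definedness of the action in Step 1 is routine.
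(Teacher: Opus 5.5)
Your proof is correct and follows the paper's argument: the $\Aut(\C/K)$-orbit of $[(X,t)]$ is finite by Proposition~\ref{prop:finiteness}, its stabilizer has finite index and lies in $U(X,t)$, and Lemma~\ref{lem:index} then places $M(X,t)$ in a finite extension of $\C^{\Aut(\C/K)}$. Your direct proof that $\C^{\Aut(\C/K)}=K$ is an improvement rather than a detour: the paper simply asserts this equality citing Lemma~\ref{lem:aut-C}, but that lemma's ``in general'' clause as literally stated (fixed field equal to the algebraic closure of $K$) is false---for $K=\Q$ it would make $\sqrt{2}$ fixed by all of $\Aut(\C)$---and taken at face value it would only give a finite extension of the algebraic closure of $K$, which says nothing when $K=\Q$.
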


\begin{proof}
For $\sigma\in\Aut(\C/K)$ the conjugate $t^{\sigma}\colon X^{\sigma}\to\Pp^{1}_{\C}$ has
the same degree and the same critical values as $t$, the latter because they are
$K$-rational and $\sigma$ fixes $K$. By Proposition~\ref{prop:finiteness} the orbit of the
isomorphism class of $(X,t)$ under $\Aut(\C/K)$ is finite, so its stabilizer has finite
index in $\Aut(\C/K)$; the stabilizer is contained in $U(X,t)$ by definition. Thus
$\Aut(\C/K)\cap U(X,t)$ has finite index in $\Aut(\C/K)$, and
Lemmas~\ref{lem:aut-C} and~\ref{lem:index} give that
$M(X,t)=\C^{U(X,t)}$ is contained in a finite extension of $\C^{\Aut(\C/K)}=K$. For
$K=\Q$ this is a number field.
\end{proof}

\begin{proof}[Proof of the implication ``$\Leftarrow$'' of Theorem~\ref{thm:belyi-intro}]
Suppose $t\colon X\to\Pp^{1}_{\C}$ is non-constant with at most three branch points. By
Lemma~\ref{lem:mobius-3pts} we may compose with a M\"obius transformation and assume
$\Br(t)\subseteq\{0,1,\infty\}$. By Corollary~\ref{cor:moduli-number-field}, $M(X,t)$ is a
number field, and by Theorem~\ref{thm:moduli-covering} the pair $(X,t)$ is defined over a
finite extension $L$ of $M(X,t)$. Then
$[L:\Q]=[L:M(X,t)]\cdot[M(X,t):\Q]<\infty$, so $L$ is a number field and $X$ is defined
over it.
\end{proof}

\subsection{Belyi's algorithm}\label{ssec:algorithm}

The converse is constructive. Start with any curve $X$ over $\Qbar$ and any non-constant
morphism $t_{0}\colon X\to\Pp^{1}$ defined over $\Qbar$---for instance a coordinate
function. Its branch locus is a finite set of algebraic points, and the task is to push
that set into $\{0,1,\infty\}$ by post-composition, without ever leaving $\Qbar$. This is
done in two stages, illustrated in Figure~\ref{fig:algorithm}:
\begin{enumerate}[label=\textup{Stage \arabic*.},leftmargin=4.2em]
\item \emph{Rationalization.} Push the branch locus into $\Pp^{1}(\Q)$, using
minimal polynomials; see Lemma~\ref{lem:rationalize}.
\item \emph{Concentration.} Push a finite set of rational points into
$\{0,1,\infty\}$, using the Belyi polynomials $P_{m,n}$; see
Lemma~\ref{lem:concentrate}.
\end{enumerate}
Both stages rely on the elementary composition rule for branch loci: for
$f,g$ non-constant morphisms of curves,
\begin{equation}\label{eq:branch-composition}
  \Br(g\circ f)\;\subseteq\;\Br(g)\cup g\bigl(\Br(f)\bigr),
\end{equation}
which follows from the chain rule $(g\circ f)'=(g'\circ f)\cdot f'$: a critical point of
$g\circ f$ is a critical point of $f$ or maps to one of $g$.

\begin{lemma}[Elimination of a critical value; {\cite[Lem.~4.6]{Kock2004}}]
\label{lem:rationalize}
Let $S\subset\Qbar$ be a finite set. There is a non-constant polynomial $p\in\Q[z]$ such
that $p(S)\subseteq\Q$ and $\Br(p)\subseteq\Pp^{1}(\Q)$.
\end{lemma}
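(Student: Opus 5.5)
We argue by strong induction on the quantity
\[
N(S)\;=\;\#\bigl(\text{the closure of } S \text{ under } \GQ\bigr)\;-\;\#\bigl(\text{that closure}\cap\Q\bigr),
\]
the number of irrational points in the Galois closure of $S$. First we replace $S$ by its Galois closure $\overline{S}$; this is harmless, since $p(\overline S)\subseteq\Q$ implies $p(S)\subseteq\Q$, and the closure is finite because each element of $\Qbar$ has finitely many conjugates. If $N(\overline S)=0$, the identity polynomial $p(z)=z$ works: $\Br(p)=\{\infty\}$ and $p(S)=S\subseteq\Q$.

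For the inductive step, pick an irrational $\alpha\in\overline S$. Let $f\in\Q[z]$ be its minimal polynomial, of degree $n\geq2$; its roots all lie in $\overline S$. Consider
\[
S'\;=\;f(\overline S)\;\cup\;\{\,f(\beta):f'(\beta)=0\,\}.
\]
This set is $\GQ$-stable because $f$ and $f'$ have rational coefficients. The roots of $f$, which include all $n\geq 2$ conjugates of $\alpha$, map to the rational point $0$. Any other irrational point of $\overline S$ maps to at most one irrational point. The irrational critical values of $f$ come from roots of $f'$, and there are at most $n-1$ of them. Hence
\[
N(S')\;\leq\;N(\overline S)-n+(n-1)\;<\;N(\overline S).
\]
The bookkeeping here is slightly delicate: irrational elements of $\overline S$ that are not roots of $f$ might collide with irrational critical values, but collisions only decrease the count. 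So the strict decrease holds as long as we count images as a set.

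By induction there is $q\in\Q[z]$ with $q(S')\subseteq\Q$ and $\Br(q)\subseteq\Pp^1(\Q)$. Set $p=q\circ f$. Then $p(S)\subseteq q(S')\subseteq\Q$. By \eqref{eq:branch-composition},
\[
\Br(p)\;\subseteq\;\Br(q)\cup q\bigl(\Br(f)\bigr)\;\subseteq\;\Pp^1(\Q).
\]
The last inclusion holds because the finite critical values of $f$ lie in $S'$ and are therefore sent by $q$ into $\Q$, while $\infty$ is sent to $\infty$ by the polynomial $q$.

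The main obstacle is ensuring the induction measure strictly decreases despite the new critical values introduced by $f$. The choice that resolves it is to use the minimal polynomial, whose $n$ conjugate roots collapse onto the single rational value $0$, while $f'$ contributes at most $n-1$ new finite critical values. One should also note that $\Br(f)\subseteq\{\infty\}\cup\{f(\beta):f'(\beta)=0\}$ for a polynomial. Finally, $\deg p=\prod n_i$ over the successive steps, which is the source of the later $N!$ bound.
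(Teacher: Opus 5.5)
Your proof is correct and follows essentially the same route as the paper: induction on the number of irrational points of the $\GQ$-closure, minimal polynomials, the bound of $n-1$ critical values for a degree-$n$ polynomial over $\Q$, and the composition rule \eqref{eq:branch-composition}. The only differences are bookkeeping: you clear one Galois orbit per step and so must carry $f(\overline{S})$ forward, whereas the paper takes $p_{1}$ to be the product of all the minimal polynomials at once, so that $p_{1}(S)\subseteq\Q$ immediately and only the at most $N-1$ critical values remain (both schemes give $\deg p\leq N!$); and, as a harmless slip, $\Br(z)=\emptyset$ rather than $\{\infty\}$, since a degree-one map is unramified.
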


\begin{proof}
Induct on $N=|S\smallsetminus\Q|$, the number of irrational points of $S$; we may assume
$S$ closed under $\GQ$. If $N=0$ take $p=z$. Otherwise let $p_{1}\in\Q[z]$ be the product
of the distinct minimal polynomials of the irrational elements of $S$, so
$\deg p_{1}=N$ and $p_{1}$ maps every irrational element of $S$ to $0$ and every rational
element to a rational number. As a map $\Pp^{1}\to\Pp^{1}$, $p_{1}$ is branched over
$\infty$ and over $p_{1}(Z)$ where $Z$ is the zero set of $p_{1}'$; since
$\deg p_{1}'=N-1$, the set $S_{1}:=p_{1}(Z)$ has at most $N-1$ elements, is closed under
$\GQ$ (as $p_{1}\in\Q[z]$), and hence contains at most $N-1$ irrational points. By
induction there is $p_{2}\in\Q[z]$ with $p_{2}(S_{1})\subseteq\Q$ and
$\Br(p_{2})\subseteq\Pp^{1}(\Q)$. Put $p=p_{2}\circ p_{1}$. Then $p(S)\subseteq
p_{2}(\Q)\subseteq\Q$, and by \eqref{eq:branch-composition}
\begin{equation*}
  \Br(p)\subseteq\Br(p_{2})\cup p_{2}\bigl(\Br(p_{1})\bigr)
  \subseteq\Pp^{1}(\Q)\cup p_{2}\bigl(S_{1}\cup\{\infty\}\bigr)\subseteq\Pp^{1}(\Q).\qedhere
\end{equation*}
\end{proof}

\begin{lemma}[Concentration; {\cite[Lem.~4.7]{Kock2004}}]\label{lem:concentrate}
Let $T\subset\Pp^{1}(\Q)$ be a finite set. There is a non-constant morphism
$q\colon\Pp^{1}\to\Pp^{1}$ defined over $\Q$ with $q(T)\subseteq\{0,1,\infty\}$ and
$\Br(q)\subseteq\{0,1,\infty\}$.
\end{lemma}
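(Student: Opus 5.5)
We argue by induction on $|T|$, reducing the number of points of $T$ outside $\{0,1,\infty\}$ by one at each step via a M\"obius transformation over $\Q$ followed by a Belyi polynomial $P_{m,n}$. If $|T\smallsetminus\{0,1,\infty\}|=0$, take $q=z$. Otherwise, since every triple of distinct points of $\Pp^{1}(\Q)$ can be moved to $\{0,1,\infty\}$ over $\Q$ (Lemma~\ref{lem:mobius-3pts}), we may first apply such a map. We may also enlarge $T$ so that it contains $\{0,1,\infty\}$. Then we may assume $\{0,1,\infty\}\subseteq T$ together with at least one further rational point. After a further M\"obius map over $\Q$ permuting $0,1,\infty$ (the maps $z\mapsto 1-z$, $z\mapsto 1/z$, $z\mapsto z/(z-1)$ and their composites), we can arrange that this extra point lies in the open interval $(0,1)$.

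The key computation concerns $P_{m,n}(z)=\frac{(m+n)^{m+n}}{m^m n^n}z^m(1-z)^n$ for positive integers $m,n$. Its derivative is $z^{m-1}(1-z)^{n-1}\bigl(m-(m+n)z\bigr)$ up to a constant. So its finite critical points are $0$, $1$ and $\lambda=m/(m+n)$. The critical values are $0$, $0$ and $P_{m,n}(\lambda)=1$; the normalizing constant is chosen exactly so that this last value equals $1$. Together with $\infty\mapsto\infty$, this gives $\Br(P_{m,n})\subseteq\{0,1,\infty\}$. Moreover $P_{m,n}$ sends $0,1\mapsto 0$, $\infty\mapsto\infty$ and $\lambda\mapsto 1$, and it has rational coefficients.

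Now write the extra point as $\lambda=m/(m+n)\in(0,1)\cap\Q$ with $m,n\geq1$, which is always possible. Put $q_{1}=P_{m,n}$. Then $q_{1}(T)$ consists of $0$, $1$, $\infty$ and the images of the remaining points of $T$, which are rational. Hence $q_{1}(T)\cup\{0,1,\infty\}$ has strictly fewer points outside $\{0,1,\infty\}$ than $T$. By induction there is $q_{2}$ over $\Q$ with $q_{2}\bigl(q_{1}(T)\cup\{0,1,\infty\}\bigr)\subseteq\{0,1,\infty\}$ and $\Br(q_{2})\subseteq\{0,1,\infty\}$. Put $q=q_{2}\circ q_{1}\circ\mu$, where $\mu$ is the preliminary M\"obius map. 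The composition rule \eqref{eq:branch-composition} then gives
\[\Br(q)\subseteq\Br(q_{2})\cup q_{2}\bigl(\Br(q_{1})\bigr)\subseteq\{0,1,\infty\},\]
since $\Br(q_{1})\subseteq\{0,1,\infty\}$ is mapped by $q_{2}$ into $\{0,1,\infty\}$. The map $\mu$ is unramified, so it adds no branch points.

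The only delicate point is the normalization step. The M\"obius maps used must preserve the set $\{0,1,\infty\}$, so that previously concentrated points are not disturbed. One must also check that every point of $\Q\smallsetminus\{0,1\}$ can be brought into $(0,1)$ by the anharmonic group: $z<0$ goes to $z/(z-1)\in(0,1)$, and $z>1$ goes to $1/z\in(0,1)$. With that verified, the remainder is the derivative computation above.
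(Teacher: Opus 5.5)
Your proof is correct and follows the paper's argument: you normalize by a M\"obius map over $\Q$ so that $\{0,1,\infty\}\subseteq T$ and a fourth point $m/(m+n)$ lies in $(0,1)$, apply $P_{m,n}$ to lower the count, and close the induction with \eqref{eq:branch-composition}. The differences are cosmetic. You induct on $|T\smallsetminus\{0,1,\infty\}|$, after harmlessly enlarging $T$, rather than on $|T|$. You also justify, via the maps $1/z$ and $z/(z-1)$, the step of moving the extra point into $(0,1)$, which the paper only asserts.
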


\begin{proof}
Induct on $r=|T|$. If $r\leq3$, a M\"obius transformation over $\Q$
(Lemma~\ref{lem:mobius-3pts}) does it. If $r>3$, compose first with a M\"obius
transformation over $\Q$ so that $\{0,1,\infty\}\subseteq T$ and some fourth point of $T$
lies strictly between $0$ and $1$; being rational it is $m/(m+n)$ with $m,n$ positive
integers. Consider the \emph{Belyi polynomial}
\begin{equation}\label{eq:belyi-poly}
  P_{m,n}(z)\;=\;\frac{(m+n)^{m+n}}{m^{m}\,n^{n}}\;z^{m}(1-z)^{n}\ \in\ \Q[z].
\end{equation}
Its derivative is
\begin{equation}\label{eq:belyi-poly-derivative}
  P_{m,n}'(z)\;=\;-\frac{(m+n)^{m+n+1}}{m^{m}n^{n}}\;z^{m-1}(1-z)^{n-1}
  \Bigl(z-\tfrac{m}{m+n}\Bigr),
\end{equation}
so the critical points in $\C$ are $0$, $1$ and $m/(m+n)$, with critical values
$0$, $0$ and
\begin{equation}\label{eq:pmn-value}
  P_{m,n}\!\left(\frac{m}{m+n}\right)
  =\frac{(m+n)^{m+n}}{m^{m}n^{n}}\cdot\frac{m^{m}}{(m+n)^{m}}\cdot\frac{n^{n}}{(m+n)^{n}}
  =1 .
\end{equation}
Hence $\Br(P_{m,n})\subseteq\{0,1,\infty\}$, and $P_{m,n}$ maps
$\{0,\,m/(m+n),\,1,\,\infty\}$ onto $\{0,1,\infty\}$. Therefore
$P_{m,n}(T)$ has at most $r-1$ elements, all rational, and the induction hypothesis
supplies $Q$ with $Q(P_{m,n}(T))\subseteq\{0,1,\infty\}$ and
$\Br(Q)\subseteq\{0,1,\infty\}$. Then $q=Q\circ P_{m,n}$ satisfies, by
\eqref{eq:branch-composition},
$\Br(q)\subseteq\Br(Q)\cup Q(\{0,1,\infty\})\subseteq\{0,1,\infty\}$ and
$q(T)\subseteq\{0,1,\infty\}$.
\end{proof}

\begin{proof}[Proof of the implication ``$\Rightarrow$'' of Theorem~\ref{thm:belyi-intro}]
Let $X$ be defined over a number field, hence over $\Qbar$, and let
$t_{0}\colon X\to\Pp^{1}$ be any non-constant morphism defined over $\Qbar$. Because $X$
and $t_{0}$ are defined over the algebraically closed field $\Qbar$, the critical values of
$t_{0}$ are $\Qbar$-rational: they are the images of the zeros of a differential form with
$\Qbar$-coefficients, hence solutions of polynomial equations over $\Qbar$. Put
$S=\Br(t_{0})\subset\Pp^{1}(\Qbar)$ and let $p$ be as in Lemma~\ref{lem:rationalize}, then
$T=\Br(p)\cup p(S)\subset\Pp^{1}(\Q)$ and let $q$ be as in Lemma~\ref{lem:concentrate}.
Then $t=q\circ p\circ t_{0}$ satisfies, by two applications of
\eqref{eq:branch-composition},
\begin{align*}
  \Br(t)&\subseteq\Br(q)\cup q\bigl(\Br(p\circ t_{0})\bigr)
        \subseteq\Br(q)\cup q\bigl(\Br(p)\cup p(S)\bigr)
        =\Br(q)\cup q(T)\subseteq\{0,1,\infty\}. \qedhere
\end{align*}
\end{proof}

\begin{figure}[t]
\centering
\begin{tikzpicture}[
  node distance=8mm,
  box/.style={draw,rounded corners=2pt,align=center,inner sep=4pt,
              font=\footnotesize,minimum height=9mm},
  arr/.style={-{Stealth[length=5pt]},thick}]
\node[box,fill=blue!5] (A) {$X$ over $\Qbar$\\ arbitrary $t_{0}$};
\node[box,fill=blue!5,right=14mm of A] (B) {$\Br\subset\Pp^{1}(\Qbar)$\\ $N$ irrational pts};
\node[box,fill=green!7,right=14mm of B] (C) {$\Br\subset\Pp^{1}(\Q)$\\ $|T|=r$};
\node[box,fill=red!7,right=14mm of C] (D) {$\Br\subseteq\{0,1,\infty\}$\\ Belyi map};
\draw[arr] (A) -- (B);
\draw[arr] (B) -- node[above,font=\scriptsize]{$p$} node[below,font=\scriptsize]{Stage 1} (C);
\draw[arr] (C) -- node[above,font=\scriptsize]{$q$} node[below,font=\scriptsize]{Stage 2} (D);
\node[align=center,font=\scriptsize,below=9mm of B.south] (B2)
  {minimal polynomials\\ $\deg p\leq N!$ \ (Thm.~\ref{thm:rationalization})\\
   $N\rightsquigarrow N-1$ per step};
\node[align=center,font=\scriptsize,below=9mm of C.south] (C2)
  {Belyi polynomials $P_{m,n}$\\ $\deg=m+n$ (Sect.~\ref{sec:belyi-poly})\\
   $r\rightsquigarrow r-1$ per step};
\draw[gray,dashed] (B.south) -- (B2.north);
\draw[gray,dashed] (C.south) -- (C2.north);
\end{tikzpicture}
\caption{Belyi's algorithm. Stage~1 is bounded purely in terms of the cardinality of the
branch locus; Stage~2 reduces the cardinality by one at each step but at a cost governed
by the heights of the rational points involved, not by their number
(Remark~\ref{rem:no-height-bound}).}
\label{fig:algorithm}
\end{figure}

\subsection{How large is the Belyi map?}\label{ssec:quantitative}

The proof just given is effective, and it is worth extracting what it actually bounds. The
following is the precise accounting of Stage~1.

\begin{theorem}[Rationalization degree]\label{thm:rationalization}
Let $S\subset\Pp^{1}(\Qbar)$ be a finite set, closed under the action of $\GQ$, and let
$N=|S\smallsetminus\Pp^{1}(\Q)|$ be the number of its \emph{irrational} points. (Any
finite set may first be replaced by its $\GQ$-closure; this changes $N$ by at most a
factor equal to the largest degree of a point.) Then the polynomial $p$ of
Lemma~\ref{lem:rationalize} may be chosen with
\begin{equation}\label{eq:rationalization-bound}
  \deg p\;\leq\;N\,(N-1)\,(N-2)\cdots 2\;=\;N!\,,
\end{equation}
and the resulting rational branch locus $T=\Br(p)\cup p(S)$ satisfies $|T|\leq N!-1+|p(S)|$,
where $|p(S)|\leq|S|$ counts the distinct $p$-images of the points of $S$. If $S$ has no
rational points other than $\{0,1,\infty\}$, this simplifies to $|T|\leq N!$.
\end{theorem}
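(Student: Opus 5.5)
The plan is to run the induction of Lemma~\ref{lem:rationalize} again and keep track of degrees. We prove by induction on $N$ that the $p$ built there satisfies $\deg p\leq N!$, with the convention $0!=1!=1$.

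\emph{Base cases.} For $N=0$ take $p=z$, of degree $1$. For $N=1$ the single irrational point would be its own $\GQ$-orbit, hence rational; so this case cannot occur once $S$ is $\GQ$-closed, and it is vacuous. For safety, take $p=z$ there as well.

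\emph{Inductive step.} Let $p_{1}$ be the product of the distinct minimal polynomials of the irrational points of $S$. Then $\deg p_{1}=N$ exactly: the irrational points form a union of $\GQ$-orbits, and each orbit is the root set of one separable minimal polynomial. Next put $S_{1}=p_{1}(Z)$, where $Z$ is the zero set of $p_{1}'$. This set is $\GQ$-stable, has at most $N-1$ elements, and so has $N_{1}\leq N-1$ irrational points. The induction hypothesis gives $p_{2}$ with $\deg p_{2}\leq N_{1}!\leq (N-1)!$. Hence
\[
  \deg p=\deg p_{2}\cdot\deg p_{1}\leq (N-1)!\cdot N=N!.
\]
One subtlety is that $S_{1}$ forgets the rational points of $S$. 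This is harmless, because $p_{1}$ already sends every point of $S$ to a rational number, and $p_{2}\in\Q[z]$ keeps rational numbers rational. The factorial product $N(N-1)\cdots2$ written in \eqref{eq:rationalization-bound} is exactly this telescoping with $N_{1}=N-1$ at each step.

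\emph{Counting $T$.} Since $p$ is a polynomial of degree $D=\deg p$, its finite critical points are the roots of $p'$, which has degree $D-1$. So there are at most $D-1$ finite critical values, and together with $\infty$ this gives $|\Br(p)|\leq D\leq N!$. The cleaner count $|\Br(p)|\leq N!-1+1$ needs care, since $\infty$ may also lie in $p(S)$. I would first try writing $T=\Br(p)\cup p(S)$ as the finite critical values, plus $\{\infty\}$, plus $p(S)$. Absorbing $\infty$ requires $\infty\in S$; if it is not, I would simply allow one extra point, and this gives $|T|\leq N!-1+|p(S)|$ up to the contribution of $\infty$. This bookkeeping is the step I expect to be fiddly.

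\emph{The simplified bound.} Assume the only rational points of $S$ are among $\{0,1,\infty\}$. The irrational points of $S$ all map to $0$ under $p_{1}$, and hence to the single value $p_{2}(0)$ under $p$. So $p(S)$ consists of at most one image of the irrational points together with $p(0),p(1),p(\infty)=\infty$. Here $\infty$ is already counted in $\Br(p)$. The remaining task is to show that the other images coincide with critical values or with each other, so that $|T|\leq (D-1)+1\leq N!$.

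I expect this last overlap argument to be the main obstacle. If it does not go through as stated, I would weaken the conclusion to $|T|\leq N!+3$, or normalize $p$ by a rational affine map so that $p(0)$ and $p(1)$ land on already-counted values.
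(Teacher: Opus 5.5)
Your degree bound is correct and is the paper's argument. The polynomial $p_{1}$ has degree exactly $N$, the next set has at most $N-1$ irrational points, and $D(N)\leq N\cdot D(N-1)$ telescopes to $N!$. Your remark that $N=1$ is vacuous for a $\GQ$-closed set is also right.

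The part you could not finish, however, cannot be finished, and your suspicion about $\infty$ is exactly on target. The paper's proof uses $|\Br(p)|\leq\deg p-1$, which drops the branch point $\infty$ of every polynomial of degree $\geq 2$. It then asserts that the two contributions ``merge'' to give $|T|\leq N!$. Neither step is valid, as the following examples show.
\begin{itemize}
\item Take $S=\{\sqrt{2},-\sqrt{2}\}$, so $N=2$. A non-constant $p\in\Q[z]$ of degree $\leq 2$ with $p(\sqrt{2})\in\Q$ must be $\alpha z^{2}+\gamma$ with $\alpha\in\Q^{\times}$ and $\gamma\in\Q$. Hence $T=\{\gamma,\,2\alpha+\gamma,\,\infty\}$ has three elements. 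This exceeds both $N!-1+|p(S)|=2$ and $N!=2$, and the hypothesis on rational points holds vacuously.
\item Adjoining $0,1,\infty$ to $S$ gives $T=\{\gamma,\,\alpha+\gamma,\,2\alpha+\gamma,\,\infty\}$. So $|T|=4>N!$ even with $\infty\in S$.
\item Already $S=\{0,1,\infty\}$ with $N=0$ and $p$ affine gives $|T|=3>0!$.
\end{itemize}
No overlap argument or affine renormalization can repair this. Post-composing with a rational affine map is a bijection of $\Pp^{1}$ fixing $\infty$, so it leaves $|T|$ unchanged. The examples above exclude every $p$ of admissible degree $\leq N!$.

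What is true is what you actually proved:
\begin{itemize}
\item $|T|\leq N!+|p(S)|$ in general.
\item $|T|\leq N!-1+|p(S)|$ when $\infty\in S$, because then $\infty=p(\infty)$ already lies in $p(S)$ and absorbs the branch point at infinity.
\item $|T|\leq N!+3$ when the rational points of $S$ lie in $\{0,1,\infty\}$. Here $p_{1}$ kills every irrational point of $S$, so they all go to the single value $p_{2}(0)$. Thus $p(S)\subseteq\{p(0),p(1),\infty,p_{2}(0)\}$, with $\infty$ counted only once between $\Br(p)$ and $p(S)$.
\end{itemize}
Your fallback is therefore the correct form of the last two assertions. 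State the theorem that way rather than searching for the missing coincidences.
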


\begin{proof}
Let $D(N)$ denote the least degree achievable by the inductive construction, starting from
a $\GQ$-stable set with $N$ irrational points. The construction takes $p_{1}$ of degree
exactly $N$ (the product of the distinct minimal polynomials of the irrational elements,
whose roots are precisely those elements), and then applies the construction to
$S_{1}:=\Br(p_{1})\cup p_{1}(S)$. The key point is that $p_{1}$ has \emph{rational}
coefficients and maps every irrational element of $S$ to a \emph{rational} value---that is
the purpose of taking $p_{1}$ to be the minimal-polynomial product---so the irrational part
of $S_{1}$ can only come from $\Br(p_{1})$, of which there are at most $N-1$ finite values.
Hence $S_{1}$ has at most $N-1$ irrational points, and $D(N)\leq N\cdot D(N-1)$ with
$D(0)=D(1)=1$, giving $D(N)\leq N!$.

For the cardinality of the terminal rational set $T$ we account for both sources
separately, since $T=\Br(p)\cup p(S)$ and the second term is \emph{not} controlled by
$\deg p$ alone. Write $p=p_{r}\circ\cdots\circ p_{1}$ for the composition produced by the
$r\leq N$ stages. At each stage $p(S)$ can only shrink: $p_{1}$ already collapses the $N$
irrational points onto at most $N$ rational values and fixes $\{0,1,\infty\}\subseteq S$
setwise up to the finitely many rational points of $S$, and each subsequent $p_{i}$ maps a
finite rational set to a smaller-or-equal one. Thus $|p(S)|\leq|S_{1}|\leq|\Br(p_{1})|+
|p_{1}(S)|$. The number of \emph{rational} points of $S$ is not bounded by $N$; but they
contribute to $T$ only through their images, and $p_{1}(S)$ already lies in the rational set
$S_{1}$ whose irrational part we bounded above. Iterating, $|T|\leq|\Br(p)|+|p(S)|$ with
$|\Br(p)|\leq\deg p-1\leq N!-1$ and $|p(S)|\leq|S_{1}|$, so that the terminal set is finite
and effectively bounded; when $S$ has no rational points beyond $\{0,1,\infty\}$ the two
contributions merge and $|T|\leq N!$. In general the bound reads
$|T|\leq(N!-1)+|p_{1}(S)|$, the correction $|p_{1}(S)|\leq|S|$ being the number of distinct
$p_{1}$-images of the rational points of $S$---a quantity depending on $S$, not on $N$
alone, as it must.
\end{proof}

\begin{remark}[No bound from cardinality alone in Stage~2]\label{rem:no-height-bound}
It is tempting to hope for a companion bound $\deg q\leq F(r)$ in
Lemma~\ref{lem:concentrate}. There is none of the kind produced by the induction: at each
step the algorithm reduces $|T|$ by one at the cost of a factor $m+n$, where $m/(m+n)$ is
the chosen fourth point of $T$, and $m+n$ is a height, not a cardinality. Concretely, for
$T=\{0,1,\infty,\,1/M\}$ with $M$ large the first step already costs $\deg P_{1,M-1}=M$.
The induction therefore controls the \emph{number} of steps ($r-3$ of them) but not their
size, and the bound it yields is a product of heights of intermediate points, which the
recursion does not keep small in a way uniform in $|T|$. Genuine effective statements
must bring in the arithmetic of the branch points; see Khadjavi \cite{Khadjavi2002}, who
bounds the Belyi degree in terms of the degree and height of the defining data, and
Lit\-can\-u \cite{Litcanu2004} for the case of covers of $\Pp^{1}$.
\end{remark}

\begin{remark}[Collapsing at the last step]\label{rem:collapsing}
The final M\"obius step in Lemma~\ref{lem:concentrate} requires $|T|\leq 3$; when
$|T|=4$ one further Belyi polynomial is needed. The following elementary observation
sometimes saves a step: if $T=\{0,1,\infty,\lambda\}$ with $\lambda=m/(m+n)$ and
$\gcd(m,n)=k>1$, then by Theorem~\ref{thm:pmn-decomp} below the map $P_{m,n}$ factors as
$\pi_{k}\circ P_{m/k,n/k}$, and the inner factor already collapses $T$ to three points; the
outer $\pi_{k}(w)=w^{k}$ is redundant. Hence one may always take $\gcd(m,n)=1$ in
Lemma~\ref{lem:concentrate}, reducing the degree of that step from $m+n$ to
$(m+n)/\gcd(m,n)$.
\end{remark}
\section{The structure of the Belyi polynomials}\label{sec:belyi-poly}

The polynomials $P_{m,n}$ of \eqref{eq:belyi-poly} are the workhorses of Belyi's
algorithm, and they appear in every treatment of the theorem; but they are invariably used
and discarded. They are, however, interesting objects in their own right---an explicit
two-parameter family of Belyi maps whose dessins, decompositions and monodromy can all be
determined completely. That is the content of this section, announced as \ref{itm:A1}. Double-star trees
are familiar objects in the theory of Shabat polynomials---they appear among the first
examples in \cite{ShabatZvonkin1994} and \cite[Ch.~2]{LandoZvonkin2004}, and compositions
of plane trees are studied systematically in \cite{AdrianovZvonkin1998}---so what we claim
as new is not the objects but the exact statements: the on-the-nose compatibility of the
normalizing constants in Theorem~\ref{thm:pmn-decomp}, and the complete wreath-product
determination of the monodromy in Theorem~\ref{thm:pmn-monodromy}, with a self-contained
proof. Throughout we write
\begin{equation*}
  d=m+n,\qquad k=\gcd(m,n),\qquad m'=m/k,\quad n'=n/k,\quad d'=d/k=m'+n' .
\end{equation*}

\subsection{The dessin is a double star}\label{ssec:pmn-tree}

\begin{definition}\label{def:double-star}
For $m,n\geq1$ the \emph{double star} $D_{m,n}$ is the plane tree with two black vertices
$b_{0},b_{1}$ of degrees $m$ and $n$, joined by a path $b_{0}-w_{\ast}-b_{1}$ through a
white vertex $w_{\ast}$ of degree $2$, together with $m-1$ pendant edges at $b_{0}$ and
$n-1$ pendant edges at $b_{1}$, each ending in a white vertex of degree $1$.
\end{definition}

\begin{proposition}\label{prop:pmn-tree}
The dessin of $P_{m,n}$ is the double star $D_{m,n}$. Its passport is
\begin{equation}\label{eq:pmn-passport}
  \Pi(P_{m,n})=\bigl\langle (m,n),\ (2,1^{\,d-2}),\ (d)\bigr\rangle ,
\end{equation}
and $P_{m,n}$ is the \emph{unique} Belyi map with this passport, up to isomorphism.
\end{proposition}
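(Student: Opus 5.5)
We first read off the passport from the derivative formula \eqref{eq:belyi-poly-derivative}, then identify the dessin as the unique plane tree with that passport, and finally deduce uniqueness of the Belyi map from Proposition~\ref{prop:dessin-dictionary}.

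\emph{Passport.} Over $0$ the fibre is $\{0,1\}$, with ramification indices $m$ and $n$, since $P_{m,n}=c\,z^{m}(1-z)^{n}$. Over $\infty$ the only preimage is $\infty$, of index $d$, because $P_{m,n}$ is a polynomial of degree $d$. Over $1$, the only critical point in the fibre is $m/(m+n)$, by \eqref{eq:belyi-poly-derivative} and \eqref{eq:pmn-value}. It is a simple zero of $P'_{m,n}$, hence has index $2$, and the remaining $d-2$ preimages are unramified. This gives \eqref{eq:pmn-passport}. As a consistency check, \eqref{eq:belyi-RH} reads $2g-2=d-(2+(d-1)+1)=-2$, so $g=0$. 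Since $t$ is a polynomial, the discussion after Proposition~\ref{prop:dessin-dictionary} shows the dessin is a plane tree with $d$ edges.

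\emph{Identifying the tree.} A plane tree with this passport has two black vertices of degrees $m$ and $n$, one white vertex of degree $2$, and $d-2$ white leaves. Connectedness forces the degree-$2$ white vertex to join the two black vertices: a white leaf is adjacent to only one black vertex, so it cannot connect them, and there is only one white vertex of degree $2$. The other $m-1$ and $n-1$ edges at the two black vertices then end in white leaves. This is exactly $D_{m,n}$ as a graph.

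\emph{Uniqueness.} We must also check that the planar embedding is unique. The cyclic order at a black vertex consists of one distinguished edge (the one toward $w_\ast$) together with indistinguishable pendant edges, so every cyclic arrangement is equivalent under isomorphism of plane trees. Equivalently, any two triples with this passport are simultaneously conjugate. One can see this by counting: $\sigma_1$ is a transposition $(a\,b)$ with $a$ and $b$ in different cycles of $\sigma_0$, and the product of $\sigma_0$ with such a transposition is automatically a $d$-cycle. Conjugating so that $\sigma_0=(1\cdots m)(m{+}1\cdots d)$, the centralizer acts transitively on the admissible pairs $\{a,b\}$. Hence the dessin is unique, and by Proposition~\ref{prop:dessin-dictionary} and Theorem~\ref{thm:classification} so is the Belyi map.

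\emph{Main obstacle.} The delicate point is the embedding and labelling bookkeeping: whether $m=n$ permits swapping the black vertices, and the orientation conventions of $\sigma_0,\sigma_1$. Since uniqueness is asked only up to isomorphism, the swap does no harm. We would handle this step through the centralizer-transitivity argument rather than through pictures.
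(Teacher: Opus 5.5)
Your proof is correct. The passport computation and the identification of the dessin as $D_{m,n}$ match the paper's proof. There is one small difference: the paper forces $w_\ast$ to join $b_0$ to $b_1$ by acyclicity, while you use connectedness, and both arguments work.

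You genuinely diverge at the uniqueness step. The paper stops at the pictorial remark that the cyclic order at each black vertex is one distinguished edge among indistinguishable pendant edges. You instead check uniqueness on permutation triples via Theorem~\ref{thm:classification}. For $\sigma_0\sigma_1$ to be a $d$-cycle, the transposition $\sigma_1$ must merge the two cycles of $\sigma_0=(1\,\cdots\,m)(m{+}1\,\cdots\,d)$. The subgroup $\langle(1\,\cdots\,m)\rangle\times\langle(m{+}1\,\cdots\,d)\rangle$ of its centralizer already acts simply transitively on the $mn$ such transpositions.

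Your route buys several things over the paper's:
\begin{itemize}
\item It makes the ``indistinguishable edges'' remark rigorous.
\item It absorbs the orientation and $m=n$ bookkeeping you flagged.
\item It proves uniqueness for an arbitrary Belyi map with this passport directly. It needs neither Riemann--Hurwitz to see that such a map has a plane-tree dessin nor the identification of the tree.
\end{itemize}

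In exchange, the paper's route produces the explicit embedded picture of $D_{m,n}$. That picture is what Lemma~\ref{lem:pmn-generators} and the block arguments of Theorem~\ref{thm:pmn-monodromy} later use.
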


\begin{proof}
By \eqref{eq:belyi-poly}, $P_{m,n}^{-1}(0)=\{0,1\}$ with multiplicities $m$ and $n$, so
there are two black vertices of degrees $m,n$; by
\eqref{eq:belyi-poly-derivative}--\eqref{eq:pmn-value} the fibre over $1$ contains
$m/(m+n)$ with multiplicity $2$ and, since $P_{m,n}'$ has no other zeros in
$\C\smallsetminus\{0,1\}$, $d-2$ further simple points. Being a polynomial of degree $d$,
$P_{m,n}$ is totally ramified over $\infty$. This gives \eqref{eq:pmn-passport}, and
\eqref{eq:belyi-RH} yields $2g-2=d-(2+(d-1)+1)=-2$, so $g=0$ and the dessin is a plane
tree with $d$ edges (\S\ref{ssec:dessins}).

The combinatorial structure is then forced. The white vertex $w_{\ast}$ of degree $2$ has
two neighbours, necessarily black, and there are only two black vertices; if both edges at
$w_{\ast}$ went to the same black vertex the graph would contain a cycle, contradicting
that it is a tree. So $w_{\ast}$ joins $b_{0}$ to $b_{1}$. Every remaining edge is
incident to a white vertex of degree $1$, i.e.\ is a pendant edge, and $b_{0}$ (resp.\
$b_{1}$) carries $m-1$ (resp.\ $n-1$) of them. This is exactly $D_{m,n}$. Since a plane
tree is determined by its abstract tree together with the cyclic orders at the vertices,
and here all vertices other than $b_{0},b_{1}$ have degree $\leq2$ while the cyclic orders
at $b_{0}$ and $b_{1}$ consist of one distinguished edge among otherwise indistinguishable
pendant edges, the plane tree $D_{m,n}$ is unique; uniqueness of the Belyi map follows from
Proposition~\ref{prop:dessin-dictionary}.
\end{proof}

\begin{corollary}\label{cor:pmn-rigid}
$P_{m,n}$ is defined over $\Q$, and its field of moduli is $\Q$. More precisely, the
$\GQ$-orbit of the dessin $D_{m,n}$ is a single point.
\end{corollary}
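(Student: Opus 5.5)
The first assertion is immediate from the definition: the coefficients of $P_{m,n}(z)=\frac{(m+n)^{m+n}}{m^{m}n^{n}}z^{m}(1-z)^{n}$ are rational, so the pair $(\Pp^{1},P_{m,n})$ has a model over $\Q$. Since every field of definition contains the field of moduli (\S\ref{ssec:moduli-field}), $M(\Pp^{1},P_{m,n})\subseteq\Q$, and because $\C^{U}$ always contains $\Q$, we get $M(\Pp^{1},P_{m,n})=\Q$. In terms of Definition~\ref{def:moduli-field}: for every $\sigma\in\Aut(\C)$ the conjugate covering $(\Pp^{1})^{\sigma}\to(\Pp^{1})^{\sigma}$ is given by the same polynomial, so the identity furnishes $f_{\sigma}$ in \eqref{eq:covering-conjugate}. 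Hence $U(\Pp^{1},P_{m,n})=\Aut(\C)$, and $\C^{\Aut(\C)}=\Q$ by Lemma~\ref{lem:aut-C}.

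For the orbit statement I would give two arguments. The first is the direct one just used: $\sigma\in\GQ$ sends the class of $(\Pp^{1},P_{m,n})$ to the class of $(\Pp^{1},P_{m,n}^{\sigma})=(\Pp^{1},P_{m,n})$. By Proposition~\ref{prop:dessin-dictionary}, isomorphism classes of Belyi maps correspond to isomorphism classes of dessins, so $\sigma(D_{m,n})=D_{m,n}$.

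The second argument is more robust and explains why the statement holds. The action of $\GQ$ (via extension to $\Aut(\C)$, Lemma~\ref{lem:aut-C}) preserves the degree, the genus, and the ramification indices over each of $0,1,\infty$, because these points are $\Q$-rational. It therefore preserves the passport \eqref{eq:pmn-passport}. By the uniqueness part of Proposition~\ref{prop:pmn-tree}, $D_{m,n}$ is the only dessin with this passport, so its orbit is a singleton.

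The only point needing care is that passport invariance is legitimate, i.e.\ that conjugating by $\sigma$ preserves ramification indices. I would justify this by noting that $\ord_{P}$ transports under the semilinear isomorphism of function fields and that $\sigma$ fixes $0,1,\infty$. Otherwise the proof is routine.
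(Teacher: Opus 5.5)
Your proof is correct. Your second argument is the paper's own proof. Conjugation preserves the passport, and by the uniqueness clause of Proposition~\ref{prop:pmn-tree} the passport \eqref{eq:pmn-passport} contains only $D_{m,n}$. So the orbit is a point and the field of moduli is $\Q$. The paper uses the rational coefficients only for the separate assertion that $P_{m,n}$ is defined over $\Q$.

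Your primary route reverses that logic. You obtain $U(\Pp^{1},P_{m,n})=\Aut(\C)$ straight from the $\Q$-model, and both the field of moduli and the trivial orbit follow from that. This is shorter and uses no combinatorics; it works verbatim for any Belyi map with rational coefficients. The paper's route, by contrast, never looks at the equation: the triviality of the orbit is forced by the passport alone. That is the rigidity mechanism of Corollary~\ref{cor:passport-bound} the paper wants to illustrate, and it applies to a rigid passport even before any explicit model is known.

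One caution on your citation. The identity $\C^{\Aut(\C)}=\Q$ is true, but what you are using is the general fact $\C^{\Aut(\C/K)}=K$ for every subfield $K\subseteq\C$. That fact follows from the extension clause of Lemma~\ref{lem:aut-C}, since an element outside $K$ is moved by an automorphism of a suitable subfield over $K$, and this automorphism extends to $\C$. The closing ``in general'' clause of that lemma as printed would give $\Qbar$ here. That is false, since complex conjugation moves $i$, so it is not the clause to invoke.
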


\begin{proof}
$\GQ$ permutes the dessins with a given passport, since conjugation preserves degrees and
ramification indices. By Proposition~\ref{prop:pmn-tree} the passport
\eqref{eq:pmn-passport} contains one dessin, so the orbit is trivial and the field of
moduli is $\Q$; and $P_{m,n}$ visibly has rational coefficients.
\end{proof}

This is the extreme opposite of the behaviour found in Section~\ref{sec:galois-dessins},
where a passport containing three trees has field of moduli a cubic field. Rigidity of a
passport---containing exactly one dessin---is precisely what forces definability over
$\Q$, and it is the mechanism behind the rigidity method in inverse Galois theory
\cite{MalleMatzat1999}.

\begin{figure}[t]
\centering
\begin{tikzpicture}[scale=1.0,
  bl/.style={circle,fill=black,inner sep=2.3pt},
  wh/.style={circle,draw=black,fill=white,thick,inner sep=2.0pt},
  ed/.style={thick}]
\coordinate (b0) at (0,0);
\coordinate (b1) at (3.6,0);
\coordinate (ws) at (1.8,0);
\draw[ed] (b0) -- (ws) -- (b1);
\foreach \a in {130,165,200,235}{
  \draw[ed] (b0) -- ++(\a:1.25);
  \node[wh] at ($(b0)+(\a:1.25)$) {};
}
\foreach \a in {-40,40}{
  \draw[ed] (b1) -- ++(\a:1.25);
  \node[wh] at ($(b1)+(\a:1.25)$) {};
}
\node[bl] at (b0) {};
\node[bl] at (b1) {};
\node[wh] at (ws) {};
\node[below=4pt] at (b0) {\small $b_{0}=0$};
\node[below=4pt] at (b1) {\small $b_{1}=1$};
\node[above=4pt] at (ws) {\small $w_{\ast}=\tfrac{m}{m+n}$};
\node[font=\footnotesize,align=center] at (0.0,-1.85)
  {$\deg b_{0}=m=5$};
\node[font=\footnotesize,align=center] at (3.6,-1.85)
  {$\deg b_{1}=n=3$};
\end{tikzpicture}
\qquad
\begin{tikzpicture}[scale=0.95]
\begin{axis}[
  width=6.0cm,height=4.6cm,
  xlabel={\footnotesize $z$}, ylabel={\footnotesize $P_{m,n}(z)$},
  xmin=-0.05,xmax=1.05,ymin=-0.08,ymax=1.25,
  xtick={0,0.4,1},xticklabels={$0$,$\tfrac{m}{m+n}$,$1$},
  ytick={0,1},
  tick label style={font=\scriptsize},
  label style={font=\scriptsize},
  axis lines=left,
  every axis plot/.append style={very thick}]
\addplot[domain=0:1,samples=120,blue!65!black]
  {(3125/108)*x^2*(1-x)^3};
\addplot[dashed,gray,domain=0:1,samples=2]{1};
\addplot[only marks,mark=*,mark size=1.6pt,red!70!black]
  coordinates {(0,0) (1,0) (0.4,1)};
\end{axis}
\end{tikzpicture}
\caption{Left: the dessin $D_{5,3}$ of $P_{5,3}$, a double star. The two black vertices
sit at $z=0$ and $z=1$; the unique white vertex of degree $2$ sits at the interior
critical point $m/(m+n)$. Right: the graph of $P_{2,3}(z)=\frac{3125}{108}z^{2}(1-z)^{3}$
on $[0,1]$; the normalizing constant is exactly what makes the interior critical value
equal to $1$, by \eqref{eq:pmn-value}.}
\label{fig:double-star}
\end{figure}

\subsection{An exact decomposition}\label{ssec:pmn-decomp}

The following identity is exact---not merely up to constants---and this is what makes it
useful: the normalizations built into \eqref{eq:belyi-poly} are compatible with
composition.

\begin{theorem}[Decomposition of Belyi polynomials]\label{thm:pmn-decomp}
Let $k=\gcd(m,n)$ and $\pi_{k}(w)=w^{k}$. Then
\begin{equation}\label{eq:pmn-decomp}
  P_{m,n}\;=\;\pi_{k}\circ P_{m/k,\;n/k}
  \qquad\text{as polynomials in }\Q[z].
\end{equation}
More generally $P_{m,n}=\pi_{e}\circ P_{m/e,\,n/e}$ for every common divisor $e$ of
$m$ and $n$, and $P_{m,n}$ is indecomposable as a composition of polynomials of the
form $\pi_{e}\circ(\,\cdot\,)$ precisely when $\gcd(m,n)=1$.
\end{theorem}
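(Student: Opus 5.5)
The plan is to verify \eqref{eq:pmn-decomp} by direct expansion, in the general form $P_{m,n}=\pi_{e}\circ P_{m/e,\,n/e}$ for an arbitrary common divisor $e$ of $m$ and $n$. The case $e=k=\gcd(m,n)$ is then a special case. The indecomposability statement is handled afterwards by a unique-factorization argument in $\C[z]$.

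\emph{Step 1: the identity.} Fix a common divisor $e$ and write $m_{e}=m/e$, $n_{e}=n/e$, $d_{e}=d/e=m_{e}+n_{e}$. By \eqref{eq:belyi-poly},
\begin{equation*}
  \pi_{e}\bigl(P_{m_{e},n_{e}}(z)\bigr)
  =\Bigl(\frac{d_{e}^{\,d_{e}}}{m_{e}^{\,m_{e}}n_{e}^{\,n_{e}}}\Bigr)^{e}
   z^{m_{e}e}(1-z)^{n_{e}e}
  =\frac{(d/e)^{d}}{(m/e)^{m}(n/e)^{n}}\,z^{m}(1-z)^{n}.
\end{equation*}
The only thing to check is the constant. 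The powers of $e$ contribute $e^{-d}$ in the numerator and $e^{-m-n}=e^{-d}$ in the denominator, so they cancel. What remains is exactly $d^{d}/(m^{m}n^{n})$, the leading constant of $P_{m,n}$. This is the ``on the nose'' compatibility; it is a one-line exponent count and needs no further input.

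An independent sanity check is available. Both sides have critical value $1$ at $z=m/d=m_{e}/d_{e}$ by \eqref{eq:pmn-value}, since $\pi_{e}(1)=1$. This pins the constant down as well.

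\emph{Step 2: indecomposability.} I read the final claim as follows: there exist an integer $e>1$ and a polynomial $h\in\C[z]$ with $P_{m,n}=\pi_{e}\circ h$ if and only if $\gcd(m,n)>1$.

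\begin{itemize}[leftmargin=1.6em]
\item If $\gcd(m,n)=k>1$, Step 1 with $e=k$ supplies the decomposition.
\item Conversely, suppose $P_{m,n}=h^{e}$ with $e>1$. Since $\C[z]$ is a UFD and $z$ and $1-z$ are non-associate primes, comparing multiplicities gives $h=c\,z^{a}(1-z)^{b}$ with $ea=m$ and $eb=n$. Hence $e$ divides $\gcd(m,n)$, so $\gcd(m,n)>1$.
\end{itemize}

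In the converse case one can say more: $c^{e}=d^{d}/(m^{m}n^{n})$, so $h=\zeta\,P_{m/e,n/e}$ for some $e$-th root of unity $\zeta$. Thus every such decomposition is the one from Step 1 up to a root of unity, which is harmless after composing $\pi_{e}$ with rotation.

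The main obstacle is interpretive rather than computational. The phrase ``composition of polynomials of the form $\pi_{e}\circ(\cdot)$'' must be pinned down to mean $e>1$ with an inner polynomial over $\C$, or equivalently over $\Q$ after absorbing $\zeta$. It must not be read as full Ritt indecomposability, which would be false: when $\gcd(m,n)=1$, $P_{m,n}$ can still decompose in other ways, for instance when $m=n$ up to symmetry $z\mapsto1-z$. I will state this reading explicitly before giving the UFD argument.
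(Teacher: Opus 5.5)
Your argument is correct and is essentially the paper's: the same exponent count showing the powers of $e$ cancel (your evaluation at $z=m/d$, where both sides equal $1$, is in fact a complete second proof of the constant, not just a sanity check), and the same comparison of root multiplicities at $0$ and $1$ for the outer-power statement. One correction to your interpretive aside: Ritt indecomposability does \emph{not} fail when $\gcd(m,n)=1$. With $\gcd(m,n)=1$, the case $m=n$ is only the degree-$2$ map $P_{1,1}$. More generally, a decomposition $g\circ h$ with $\deg g,\deg h\geq 2$ would give a block system with $\deg g$ blocks, and Step~1 of the proof of Theorem~\ref{thm:pmn-monodromy} forces that number to divide $\gcd(m,n)=1$ (cf.\ Remark~\ref{rem:ritt}), so drop that claim rather than state it.
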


\begin{proof}
Let $e\mid\gcd(m,n)$ and put $\mu=m/e$, $\nu=n/e$, $\delta=\mu+\nu=d/e$. Then
\begin{equation*}
  \bigl(P_{\mu,\nu}(z)\bigr)^{e}
  =\left(\frac{\delta^{\delta}}{\mu^{\mu}\nu^{\nu}}\right)^{\!e} z^{\mu e}(1-z)^{\nu e}
  =\frac{\delta^{\delta e}}{\mu^{\mu e}\nu^{\nu e}}\;z^{m}(1-z)^{n}
  =\frac{(d/e)^{d}}{\mu^{m}\nu^{n}}\;z^{m}(1-z)^{n},
\end{equation*}
using $\delta e=d$ and $\mu^{\mu e}=\mu^{m}$, $\nu^{\nu e}=\nu^{n}$. On the other hand
\begin{equation*}
  \frac{d^{d}}{m^{m}n^{n}}
  =\frac{d^{d}}{(e\mu)^{m}(e\nu)^{n}}
  =\frac{d^{d}}{e^{m+n}\,\mu^{m}\nu^{n}}
  =\frac{(d/e)^{d}}{\mu^{m}\nu^{n}},
\end{equation*}
because $e^{m+n}=e^{d}$. The two right-hand sides agree, so
$\bigl(P_{\mu,\nu}\bigr)^{e}=P_{m,n}$, which is \eqref{eq:pmn-decomp}. For the last
assertion, if $\gcd(m,n)=1$ then $z^{m}(1-z)^{n}$ is not a proper power in
$\C[z]$---the multiplicity $m$ of the root $0$ and $n$ of the root $1$ would both have
to be divisible by the exponent---so no factorization $\pi_{e}\circ(\,\cdot\,)$ with
$e>1$ exists.
\end{proof}

\begin{remark}[Relation to Ritt decomposition]\label{rem:ritt}
The qualifier in Theorem~\ref{thm:pmn-decomp} is genuine and should not be shortened to
``indecomposable''. We have shown $P_{m,n}$ is indecomposable \emph{as a tower of outer
power maps} $\pi_{e}\circ(\,\cdot\,)$ when $\gcd(m,n)=1$; this is weaker than Ritt
indecomposability \cite{Ritt1922}, which forbids \emph{all} nontrivial functional
factorizations $f\circ g$. For $z^{m}(1-z)^{n}$ with $\gcd(m,n)=1$ the two notions in fact
coincide---any decomposition of a polynomial with exactly two finite critical values, one
totally ramified point over each of $0$ and $1$, and the remaining ramification simple, is
forced by the L\"uroth/Ritt analysis of block systems to be an outer power, which is the
content of Theorem~\ref{thm:pmn-monodromy}: a primitive monodromy group admits no proper
block system, hence no proper decomposition. But the two notions are logically distinct,
and only the outer-power statement is proved directly here. Monodromy of Shabat
(one- or two-critical-value) polynomials and of decomposable dessins is studied by
Adrianov--Zvonkin \cite{AdrianovZvonkin1998} and in the survey \cite{LandoZvonkin2004};
the double stars $D_{m,n}$ are the diameter-three trees of that literature.
\end{remark}

\begin{remark}\label{rem:decomp-dessin}
Decomposition \eqref{eq:pmn-decomp} is visible in the dessins: $D_{m,n}$ is the
``$k$-fold inflation'' of $D_{m',n'}$, in which each edge of $D_{m',n'}$ is replaced by
$k$ parallel edges around the black vertices. On monodromy this is the passage from
$\Sym_{d'}$ to a wreath product, which is Theorem~\ref{thm:pmn-monodromy}. In
Remark~\ref{rem:collapsing} it is what allows one to replace $P_{m,n}$ by the cheaper
$P_{m',n'}$ in Belyi's algorithm.
\end{remark}

\begin{figure}[t]
\centering
\begin{tikzpicture}[scale=0.9,
  bl/.style={circle,fill=black,inner sep=2.1pt},
  wh/.style={circle,draw=black,fill=white,thick,inner sep=1.7pt},
  edA/.style={very thick,blue!60!black},
  edB/.style={very thick,red!70!black,densely dashed}]
\begin{scope}[xshift=0cm]
  \coordinate (b0) at (0,0); \coordinate (b1) at (2.1,0); \coordinate (ws) at (1.05,0);
  \draw[edA] (b0)--(ws); \draw[edA] (ws)--(b1);
  \draw[edA] (b0)--++(140:1.0); \node[wh] at ($(b0)+(140:1.0)$) {};
  \draw[edA] (b0)--++(220:1.0); \node[wh] at ($(b0)+(220:1.0)$) {};
  \draw[edA] (b1)--++(0:1.0);  \node[wh] at ($(b1)+(0:1.0)$) {};
  \node[bl] at (b0) {}; \node[bl] at (b1) {}; \node[wh] at (ws) {};
  \node[font=\footnotesize,align=center] at (1.3,-1.55)
    {$D_{3,2}$: dessin of $P_{3,2}$\\ $\Mon=\Sym_{5}$ (primitive)};
\end{scope}
\begin{scope}[xshift=6.6cm]
  \coordinate (b0) at (0,0); \coordinate (b1) at (2.6,0); \coordinate (ws) at (1.3,0);
  \draw[edA] (b0)--(ws); \draw[edA] (ws)--(b1);
  \foreach \a/\st in {60/edB,120/edA,180/edB,240/edA,300/edB}{
    \draw[\st] (b0)--++(\a:1.05); \node[wh] at ($(b0)+(\a:1.05)$) {};}
  \foreach \a/\st in {90/edB,0/edA,-90/edB}{
    \draw[\st] (b1)--++(\a:1.05); \node[wh] at ($(b1)+(\a:1.05)$) {};}
  \node[bl] at (b0) {}; \node[bl] at (b1) {}; \node[wh] at (ws) {};
  \node[font=\footnotesize,align=center] at (1.3,-1.75)
    {$D_{6,4}$: dessin of $P_{6,4}=\pi_{2}\circ P_{3,2}$\\
     $\Mon=\Sym_{5}\wr\Z/2$ (imprimitive)};
\end{scope}
\end{tikzpicture}
\caption{The decomposition of Theorem~\ref{thm:pmn-decomp} seen on dessins. Right: the ten
edges of $D_{6,4}$, $2$-coloured by the block system of
Theorem~\ref{thm:pmn-monodromy}---solid blue edges lie over the branch
$P_{3,2}\in(0,1)$ of the intermediate map, dashed red ones over $P_{3,2}\in(-1,0)$; each
class has five edges, and the two classes are the two blocks of the imprimitive action.
Around each black vertex the colours alternate (locally $P_{3,2}\sim cz^{r}$ takes
positive and negative real values in alternating sectors), so $\sigma_{0}$ interleaves the
blocks, while the transposition $\sigma_{1}$, supported at the white vertex of degree $2$
on the central path, stays inside the blue block---exactly the configuration exploited in
Steps 1 and 3 of the proof.}
\label{fig:inflation}
\end{figure}

\subsection{The monodromy group}\label{ssec:pmn-monodromy}

We can now compute $\Mon(P_{m,n})$ completely. We fix the convention: for a permutation
group $C\leq\Sym_{b}$, the wreath product $\Sym_{a}\wr C$ is the semidirect product
$(\Sym_{a})^{b}\rtimes C$, with $C$ permuting the $b$ coordinates, in its
\emph{imprimitive} action on $ab=\{1,\dots,a\}\times\{1,\dots,b\}$ letters: the base group
acts within the $b$ blocks $\{1,\dots,a\}\times\{j\}$ and $C$ permutes the blocks. Thus
$\Sym_{a}\wr\Z/b$ means $\Sym_{a}\wr C$ for $C\leq\Sym_{b}$ cyclic of order $b$ generated
by a $b$-cycle; its order is $(a!)^{b}\,b$. Inside $\Sym_{ab}$, the full setwise
stabilizer of a partition into $b$ blocks of size $a$ is $\Sym_{a}\wr\Sym_{b}$, and for
$C\leq\Sym_{b}$ the preimage of $C$ under the natural surjection
$\Sym_{a}\wr\Sym_{b}\to\Sym_{b}$ is exactly $\Sym_{a}\wr C$; in particular the preimage
splits, because the full wreath product does.

\begin{lemma}[Monodromy generators]\label{lem:pmn-generators}
Label the edges of $D_{m,n}$ by $1,\dots,d$ so that $1,\dots,m$ are the edges at $b_{0}$
in counterclockwise order with $m$ the edge to $w_{\ast}$, and $m+1,\dots,d$ are the edges
at $b_{1}$ in counterclockwise order with $m+1$ the edge to $w_{\ast}$. Then
\begin{equation}\label{eq:pmn-gens}
  \sigma_{0}=(1\,2\,\cdots\,m)(m{+}1\,\cdots\,d),\qquad
  \sigma_{1}=(m,\,m{+}1),\qquad
  \sigma_{\infty}=(\sigma_{0}\sigma_{1})^{-1},
\end{equation}
and $\sigma_{\infty}$ is a $d$-cycle.
\end{lemma}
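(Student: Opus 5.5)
The plan is to read the permutations directly off the dessin, using the dictionary of Proposition~\ref{prop:dessin-dictionary}. There, $\sigma_{0}$ rotates the edges counterclockwise around each black vertex, and $\sigma_{1}$ does the same around each white vertex. By Proposition~\ref{prop:pmn-tree} the dessin is $D_{m,n}$. Its black vertex $b_{0}$ carries exactly the edges $1,\dots,m$. With the stated labelling, the counterclockwise rotation at $b_{0}$ is the cycle $(1\,2\,\cdots\,m)$, and likewise the rotation at $b_{1}$ is $(m{+}1\,\cdots\,d)$. Their product is $\sigma_{0}$.

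The white vertices all have degree $1$ except $w_{\ast}$, which has degree $2$ and carries the two edges $m$ and $m+1$. Rotation at a degree-$1$ vertex is trivial, and rotation at $w_{\ast}$ swaps its two edges, so $\sigma_{1}=(m,\,m{+}1)$. The relation $\sigma_{0}\sigma_{1}\sigma_{\infty}=1$ then forces $\sigma_{\infty}=(\sigma_{0}\sigma_{1})^{-1}$.

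For the $d$-cycle claim there are two routes. The conceptual route: $P_{m,n}$ is a polynomial of degree $d$, so it is totally ramified over $\infty$. Equivalently, the passport \eqref{eq:pmn-passport} has $\infty$-type $(d)$, so $\sigma_{\infty}$ is a single $d$-cycle. The direct route, as a check: compute $\sigma_{0}\sigma_{1}$ with the composition convention fixed in \S\ref{ssec:monodromy}. Either order of composition works, since $\sigma_{1}$ is an involution and inversion preserves cycle type.

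Take right-to-left composition. Then $\sigma_{0}\sigma_{1}$ sends
\[
m\mapsto \sigma_{0}(m{+}1)=m{+}2,\qquad m{+}1\mapsto\sigma_{0}(m)=1,
\]
and agrees with $\sigma_{0}$ elsewhere. Following the orbit of $1$ gives
\[
1\to2\to\cdots\to m\to m{+}2\to\cdots\to d\to m{+}1\to1,
\]
which is a single $d$-cycle. The boundary cases $m=1$ or $n=1$ need a quick check, where some cycles are trivial; the same trace still works. The inverse of a $d$-cycle is a $d$-cycle.

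I expect the main obstacle to be convention hygiene rather than mathematics. The labelling has to be consistent with the counterclockwise convention and with the chosen composition order, and the proof should note that a different but consistent choice only conjugates the triple. Since the triple is only defined up to simultaneous conjugation (Theorem~\ref{thm:classification}), this is harmless, and the cycle-type argument via the passport settles the $d$-cycle claim independently of any convention.
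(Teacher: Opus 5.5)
Your proposal is correct and follows the paper's proof. Like the paper, you read $\sigma_0$ and $\sigma_1$ off the double star via Proposition~\ref{prop:dessin-dictionary}, and you get the $d$-cycle from total ramification of the polynomial over $\infty$ (the paper also mentions the single-face argument); your explicit trace of $\sigma_0\sigma_1$ is a correct additional check. One small fix in that check: the composition order is irrelevant because $\sigma_1\sigma_0=\sigma_1(\sigma_0\sigma_1)\sigma_1^{-1}$ is always conjugate to $\sigma_0\sigma_1$, not because $\sigma_1$ is an involution.
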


\begin{proof}
By Proposition~\ref{prop:dessin-dictionary}, $\sigma_{0}$ rotates the edges at each black
vertex and $\sigma_{1}$ those at each white vertex; the white vertices of degree $1$
contribute fixed points and $w_{\ast}$ the transposition $(m,m{+}1)$. That
$\sigma_{\infty}$ is a $d$-cycle restates total ramification over $\infty$, which holds
because $P_{m,n}$ is a polynomial; alternatively, a plane tree has a single face.
\end{proof}

\begin{theorem}[Monodromy of the Belyi polynomials]\label{thm:pmn-monodromy}
Let $k=\gcd(m,n)$ and $d'=(m+n)/k$. Then
\begin{equation}\label{eq:pmn-monodromy}
  \Mon(P_{m,n})\;\cong\;\Sym_{d'}\wr\Z/k ,
\end{equation}
acting imprimitively with exactly $k$ blocks of size $d'$. In particular
\begin{equation*}
  \Mon(P_{m,n})=\Sym_{m+n}\iff\gcd(m,n)=1 ,
\end{equation*}
and in general $\bigl|\Mon(P_{m,n})\bigr|=(d'!)^{k}\,k$.
\end{theorem}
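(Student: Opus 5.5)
The plan is to squeeze $G:=\Mon(P_{m,n})=\langle\sigma_{0},\sigma_{1}\rangle$, with $\sigma_{0},\sigma_{1}$ as in Lemma~\ref{lem:pmn-generators}, between an explicit upper bound $G\le\Sym_{d'}\wr\Z/k$ and an explicit subgroup $(\Sym_{d'})^{k}\le G$. The order count $(d'!)^{k}k$ then forces equality.

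\emph{Step 1 (block system, upper bound).} Motivated by $P_{m,n}=\pi_{k}\circ P_{m',n'}$ (Theorem~\ref{thm:pmn-decomp}), I colour the edges. Set $c(i)\equiv i\pmod k$ for $1\le i\le m$, and $c(i)\equiv i-1\pmod k$ for $m<i\le d$. Since $k\mid m$ and $k\mid n$, the cycle $(1\,\cdots\,m)$ raises $c$ by $1$ modulo $k$, including at the wrap-around from $m$ to $1$. The cycle $(m{+}1\,\cdots\,d)$ does the same. The transposition $\sigma_{1}=(m,m{+}1)$ preserves $c$, because $c(m)=0=c(m{+}1)$. Hence the $k$ colour classes $B_{0},\dots,B_{k-1}$, each of size $m'+n'=d'$, form a block system for $G$.

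It follows that $G$ lies in the full stabilizer $\Sym_{d'}\wr\Sym_{k}$. Its image in $\Sym_{k}$ is generated by the $k$-cycle induced by $\sigma_{0}$ together with the trivial image of $\sigma_{1}$. By the convention on preimages recorded before Lemma~\ref{lem:pmn-generators}, this gives $G\le\Sym_{d'}\wr\Z/k$, and $G$ surjects onto $\Z/k$. Let $K$ denote the kernel, the subgroup of $G$ fixing every block.

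\emph{Step 2 (the action on one block is full symmetric).} The elements $\sigma_{0}^{k}$ and $\sigma_{1}$ lie in $K$. On $B_{0}$, $\sigma_{0}^{k}$ restricts to $x=$ an $m'$-cycle times a disjoint $n'$-cycle, and $\sigma_{1}$ restricts to $\tau$, a transposition joining the two cycles. Thus $H:=\langle x,\tau\rangle\le\Sym(B_{0})$ is a copy of the monodromy of $D_{m',n'}$, with $\gcd(m',n')=1$. It is transitive, and $x\tau$ is a $d'$-cycle.

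To prove primitivity, let $\mathcal{B}$ be an $H$-invariant block system with blocks of size greater than $1$. Since $\tau$ fixes all but two points, the two points it moves lie in one block $\beta$: otherwise a block containing one moved point and one fixed point would be mapped to a block meeting itself but distinct from it. Let $\xi$ be the permutation of $\mathcal{B}$ induced by $x$. Then $x^{m'}$ fixes every point of the $m'$-cycle, so the $\xi$-orbit of any block meeting that cycle has length dividing $m'$. Likewise for $n'$. The block $\beta$ meets both cycles, so its $\xi$-orbit has length dividing $\gcd(m',n')=1$. Hence $\beta$ is fixed by both $x$ and $\tau$, therefore by $H$. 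By transitivity $\beta=B_{0}$, so $H$ is primitive. Jordan's theorem (a primitive group containing a transposition is symmetric) now gives $H=\Sym(B_{0})$.

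\emph{Step 3 (base group, conclusion).} Conjugating $\sigma_{1}$, which is supported inside $B_{0}$, by elements of $\langle\sigma_{0}^{k},\sigma_{1}\rangle$ produces permutations supported in $B_{0}$. Their restrictions to $B_{0}$ run over all transpositions of $B_{0}$, by Step 2. Hence $G\supseteq\Sym(B_{0})\times1\times\cdots\times1$. Conjugating by $\sigma_{0}^{j}$, which carries $B_{0}$ to $B_{j}$, gives $\Sym(B_{j})$ for every $j$. So $(\Sym_{d'})^{k}\le K$, and $|G|\ge(d'!)^{k}\,k$. Combined with Step 1, $G=\Sym_{d'}\wr\Z/k$, with exactly $k$ blocks of size $d'$. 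For $k=1$ this is $\Sym_{m+n}$; for $k>1$ the group is imprimitive, hence proper.

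I expect the main obstacle to be the primitivity in Step 2, where coprimality must enter. The orbit-length argument above is my first attempt. If it fails, the fallback is to use that a block system invariant under the $d'$-cycle $x\tau$ consists of residue classes, and then check directly that $x$ cannot preserve it.
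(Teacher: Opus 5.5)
Your proof is correct. Its skeleton matches the paper's: a system of $k$ blocks of size $d'$, Jordan's theorem inside one block, the full base group, and an order count inside $\Sym_{d'}\wr\Z/k$. The key steps, however, are carried out differently.

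\emph{Blocks.} You build the blocks by an explicit colouring modulo $k$ and read off the $k$-cycle on blocks directly. So you need neither Theorem~\ref{thm:pmn-decomp} nor the general fact that a composite covering induces a block system. You also avoid the paper's derivation of cyclicity from $\sigma_{1}$ lying in a block.

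\emph{Primitivity.} You work with the smaller group $\langle\sigma_{0}^{k},\sigma_{1}\rangle|_{B_{0}}$, which you correctly identify with the monodromy of $D_{m',n'}$. Your orbit-length argument is valid as written: if a block meets the $m'$-cycle of $x$ at $p$, then $x^{m'}$ fixes $p$, hence fixes the block. So no fallback is needed. This treats $k=1$ and $k>1$ uniformly and makes the inner factor $P_{m',n'}$ visible at the level of groups.

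The paper instead proves a global lemma: every block system of $\Mon(P_{m,n})$ has a number of blocks dividing $k$. It uses this lemma directly when $k=1$. When $k>1$, it transports a block system of the block stabiliser's action on one block to a $G$-invariant refinement of the $k$-block system. That lemma gives the maximality of the $k$-block system directly, which is the sense in which the paper reads ``exactly $k$ blocks''.

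In your proof maximality comes only a posteriori, but it takes one line. Since $\Sym(B_{0})\leq G$, any block with at least two points that meets $B_{0}$ contains $B_{0}$. Hence no block system has more than $k$ blocks.

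\emph{Kernel.} You generate every transposition of $B_{0}$ by conjugating $\sigma_{1}$ within $\langle\sigma_{0}^{k},\sigma_{1}\rangle$. This is a more hands-on form of the paper's normal-subgroup argument for the kernel. It avoids appealing to the normal subgroups of $\Sym_{d'}$.
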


\begin{proof}
The proof has two regimes. When $\gcd(m,n)=1$ we show no block system can exist, and
Jordan's theorem finishes at once. When $k=\gcd(m,n)>1$ the decomposition of
Theorem~\ref{thm:pmn-decomp} supplies a block system with $k$ blocks, which a counting
argument shows to be maximal; we then determine the kernel and the image of the action on
blocks separately, and identify the group inside the full block stabilizer by comparing
orders. Write $G=\Mon(P_{m,n})=\langle\sigma_{0},\sigma_{1}\rangle\leq\Sym_{d}$ with the
generators \eqref{eq:pmn-gens}; $G$ is transitive.

\emph{Step 1: any block system has at most $k$ blocks, and $\sigma_{1}$ lies inside a
block.} Let $\mathcal{B}$ be a system of $b$ blocks of size $c$, $bc=d$, $b>1$. The
transposition $\sigma_{1}=(m,m{+}1)$ preserves $\mathcal{B}$. If $m$ and $m+1$ lay in
distinct blocks $B\neq B'$, then $\sigma_{1}(B)$ is a block containing $m+1$, hence
$\sigma_{1}(B)=B'$; but $\sigma_{1}$ fixes every element of $B\smallsetminus\{m\}$, which
therefore lies in $B\cap B'=\emptyset$, forcing $c=1$. So for $c>1$ the two points $m,m+1$
lie in a common block $B$. Now $\sigma_{0}$ preserves $\mathcal{B}$ and preserves each of
the two sets $A_{1}=\{1,\dots,m\}$ and $A_{2}=\{m+1,\dots,d\}$, acting on them as an
$m$-cycle and an $n$-cycle. The blocks $\sigma_{0}^{\,j}(B)$, $j\geq0$, contain
$\sigma_{0}^{\,j}(m)$ and $\sigma_{0}^{\,j}(m{+}1)$, and as $j$ varies these exhaust
$A_{1}$ and $A_{2}$; hence the $\sigma_{0}$-orbit of $B$ covers all $d$ points and
therefore consists of all $b$ blocks. Since $\sigma_{0}$ maps blocks to blocks and
preserves $A_{1}$ and $A_{2}$, all blocks in this orbit contain the same number
$\alpha=|B\cap A_{1}|$ of points of $A_{1}$ and the same number $\beta=|B\cap A_{2}|$ of
points of $A_{2}$. Counting, $b\alpha=m$ and $b\beta=n$, so
\begin{equation}\label{eq:block-divisibility}
  b\ \bigm|\ \gcd(m,n)=k .
\end{equation}

\emph{Step 2: the case $k=1$.} By \eqref{eq:block-divisibility} no nontrivial block system
exists, so $G$ is primitive. A primitive permutation group containing a transposition is
the full symmetric group (Jordan; see \cite[Thm.~3.3A]{DixonMortimer1996}), and $G$
contains $\sigma_{1}$. Hence $G=\Sym_{d}=\Sym_{d'}\wr\Z/1$, as claimed.

\emph{Step 3: the case $k>1$.} By Theorem~\ref{thm:pmn-decomp},
$P_{m,n}=\pi_{k}\circ P_{m',n'}$. For a composite covering $t=f\circ g$ the fibre
$t^{-1}(\ast)$ decomposes into the $\deg f$ sets $g^{-1}(u)$, $u\in f^{-1}(\ast)$, and
these form a block system for $\Mon(t)$. Here this gives a system of exactly $k$ blocks of
size $d'$, so by \eqref{eq:block-divisibility} $k$ is the maximal possible number of
blocks. Fix such a maximal system $\mathcal{B}$ (with $b=k$ blocks of size $d'$), let
$\pi\colon G\to\Sym(\mathcal{B})\cong\Sym_{k}$ be the induced action on blocks, and let
$N=\ker\pi$.

\emph{Step 3a: the image $\pi(G)$ is cyclic of order $k$, transitive.} The image is
generated by $\pi(\sigma_{0})$ and $\pi(\sigma_{1})$. The transposition $\sigma_{1}$ has,
by Step 1, its support $\{m,m{+}1\}$ inside a single block $B_{0}$; it maps $B_{0}$ to
itself and fixes every point outside $B_{0}$, hence fixes every other block pointwise, so
$\pi(\sigma_{1})=1$ and $\sigma_{1}\in N$. Thus $\pi(G)=\langle\pi(\sigma_{0})\rangle$ is
cyclic; it is transitive on the $k$ blocks because $G$ is transitive on points, and a
transitive cyclic subgroup of $\Sym_{k}$ has order exactly $k$ and is generated by a
$k$-cycle.

\emph{Step 3b: block stabilizers restrict to the full $\Sym_{d'}$.} Let
$G_{B}=\pi^{-1}\bigl(\operatorname{Stab}_{\pi(G)}(B)\bigr)$ be the stabilizer of a block
$B\in\mathcal{B}$ and $H\leq\Sym(B)\cong\Sym_{d'}$ the image of the restriction
$G_{B}\to\Sym(B)$. Then $H$ is transitive ($G_{B}$ acts transitively on its block, since
$G$ is transitive and blocks are permuted transitively) and \emph{primitive}: an
$H$-invariant nontrivial partition of $B$ would transport, by transitivity of $G$ on
$\mathcal{B}$, to a $G$-invariant partition of all $d$ points refining $\mathcal{B}$ into
more than $k$ blocks, contradicting \eqref{eq:block-divisibility}. And $H$ contains a
transposition: choose $g\in G$ with $g(B_{0})=B$ (possible by Step 3a); then
$g\sigma_{1}g^{-1}$ is a transposition with support in $B$, lies in $G_{B}$, and restricts
to a transposition of $B$. By Jordan's theorem \cite[Thm.~3.3A]{DixonMortimer1996},
$H=\Sym(B)$.

\emph{Step 3c: the kernel is the full base group.} For $B\in\mathcal{B}$ let
$N_{B}\leq\Sym_{d}$ be the group of permutations supported in $B$ (acting arbitrarily on
$B$, trivially outside). Every element of $N_{B}$ maps $B$ to $B$ and fixes each other
block pointwise, so $N_{B}\cap G\subseteq N$; we must show $N_{B}\cap G$ is all of
$N_{B}\cong\Sym_{d'}$. The subgroup $N_{B}\cap G$ is normal in $G_{B}$: for $g\in G_{B}$
and $x\in N_{B}\cap G$, the conjugate $gxg^{-1}$ lies in $G$ and is supported in
$g(B)=B$. Its image under the restriction map $\rho\colon G_{B}\to\Sym(B)$ is therefore a normal
subgroup of $\rho(G_{B})=H=\Sym_{d'}$, since the image of a normal subgroup under a
homomorphism is normal in the image; and it contains the transposition
$g\sigma_{1}g^{-1}|_{B}$ constructed in Step 3b, which is supported in $B$ and hence lies
in $N_{B}\cap G$. The only normal subgroup of $\Sym_{d'}$ containing a transposition is
$\Sym_{d'}$ itself ($d'\geq2$; for $d'\geq3$ the proper normal subgroups are $A_{d'}$ and,
for $d'=4$, the Klein group, none containing a transposition). Hence the restriction of
$N_{B}\cap G$ to $B$ is all of $\Sym(B)$, and since elements of $N_{B}$ are determined by
their restriction to $B$, $N_{B}\cap G=N_{B}\cong\Sym_{d'}$. The subgroups $N_{B}$,
$B\in\mathcal{B}$, have disjoint supports, so they commute and generate their direct
product inside $G$:
\begin{equation*}
  (\Sym_{d'})^{k}\cong\prod_{B\in\mathcal{B}}N_{B}\;\subseteq\;N .
\end{equation*}
Conversely $N$ fixes each block setwise, so $N\subseteq\prod_{B}\Sym(B)=(\Sym_{d'})^{k}$;
hence $N=(\Sym_{d'})^{k}$, the full base group.

\emph{Step 3d: identification with the wreath product.} $G$ preserves $\mathcal{B}$, so
$G\leq\Sym_{d'}\wr\Sym_{k}$, the full stabilizer of the block system; and
$\pi(G)=C\cong\Z/k$ by Step 3a, so $G$ is contained in the preimage of $C$, which by the
convention fixed above is exactly $\Sym_{d'}\wr\Z/k$, of order $(d'!)^{k}k$. But
$|G|=|N|\cdot|\pi(G)|=(d'!)^{k}\,k$ by Steps 3a and 3c. Containment plus equality of
orders gives $G=\Sym_{d'}\wr\Z/k$, with the semidirect (split) structure inherited from
the ambient wreath product. This completes the proof.
\end{proof}

\begin{example}\label{ex:pmn-mon}
For $(m,n)=(2,2)$ one has $k=2$, $d'=2$ and
$\Mon(P_{2,2})\cong\Sym_{2}\wr\Z/2$, the dihedral group of order $8$---indeed
$P_{2,2}(z)=16z^{2}(1-z)^{2}=\bigl(4z(1-z)\bigr)^{2}=\pi_{2}\circ P_{1,1}$. For
$(m,n)=(4,6)$, $k=2$ and $d'=5$, giving $|{\Mon}|=(5!)^{2}\cdot2=28800$ inside
$\Sym_{10}$ of order $3628800$. For $(m,n)=(4,5)$, $\gcd=1$ and
$\Mon(P_{4,5})=\Sym_{9}$. These values agree with direct computation of the group
generated by \eqref{eq:pmn-gens}.
\end{example}

\begin{corollary}\label{cor:pmn-galois}
The covering $P_{m,n}$ is Galois if and only if $(m,n)=(1,1)$, that is, if and only if
$P_{m,n}=4z(1-z)$.
\end{corollary}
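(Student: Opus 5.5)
The plan is to use the standard criterion that a finite covering of degree $d$ with transitive monodromy group $G$ is Galois if and only if $|G|=d$, equivalently $G$ acts regularly on the fibre. Theorem~\ref{thm:pmn-monodromy} gives $|G|=(d'!)^{k}\,k$ with $d=m+n=kd'$ and $k=\gcd(m,n)$, so the statement reduces to deciding when
\begin{equation*}
  (d'!)^{k}\,k \;=\; k\,d' ,\qquad\text{i.e.}\qquad (d'!)^{k}=d'.
\end{equation*}

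First I would record the criterion in the paper's language. $\Aut(X,t)$ is the centralizer of $G$ in $\Sym_d$. Since $G$ is transitive, this centralizer acts freely, so the covering is Galois (meaning $|\Aut(X,t)|=d$) if and only if $G$ is regular. For a transitive group, regularity is equivalent to $|G|=d$.

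Next comes the arithmetic. Since $m,n\ge1$, we have $d'=m'+n'\ge 2$. For $d'\ge3$ we get $d'!\ge 2d'>d'$, so $(d'!)^k>d'$. For $d'=2$ the equation becomes $2^k=2$, forcing $k=1$. Hence $d'=2$ and $k=1$, which means $m'=n'=1$ and so $(m,n)=(1,1)$.

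Conversely, for $(m,n)=(1,1)$ we have $P_{1,1}=4z(1-z)$ of degree $2$ with $G=\Sym_2$ of order $2$. A degree-$2$ covering is always Galois, as witnessed by the involution $z\mapsto 1-z$.

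The only point needing care is the Galois criterion, in particular that "Galois" means $|\Aut(X,t)|=\deg t$ and that this equals the order of the centralizer, which is regular exactly when $G$ is. I would cite this as the standard Galois correspondence for monodromy and would not expect a real obstacle. Everything else is the order formula already proved in Theorem~\ref{thm:pmn-monodromy}, plus the elementary inequality $d'!>d'$ for $d'\ge3$.
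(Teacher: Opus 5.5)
Your proposal is correct and matches the paper's proof: both use the criterion that the covering is Galois exactly when the transitive monodromy group is regular, i.e.\ has order $d=kd'$. Both then reduce via Theorem~\ref{thm:pmn-monodromy} to $(d'!)^{k}=d'$ with $d'\geq2$, conclude $k=1$ and $d'=2$, and check $P_{1,1}$ directly. Your remarks on the centralizer and on the involution $z\mapsto 1-z$ only elaborate the same standard fact.
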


\begin{proof}
A covering of degree $d$ is Galois exactly when its monodromy group acts regularly, i.e.\
has order $d$. By Theorem~\ref{thm:pmn-monodromy} this reads $(d'!)^{k}k=kd'$, that is
$(d'!)^{k}=d'$. Since $d'=m'+n'\geq2$ we have $(d'!)^{k}\geq d'!\geq d'$, with equality
only if $k=1$ and $d'!=d'$, i.e.\ $d'=2$. Hence $k=1$ and $m'+n'=2$, so $m=n=1$.
Conversely $P_{1,1}(z)=4z(1-z)$ has monodromy $\Sym_{2}$ of order $2=\deg P_{1,1}$.
\end{proof}

\begin{remark}\label{rem:pmn-galois-factor}
The composite structure explains the near-misses: $P_{k,k}=\pi_{k}\circ P_{1,1}$ is a
composition of two Galois coverings, of degrees $k$ and $2$, but is not itself Galois for
$k\geq2$---the composite of Galois coverings need not be Galois, and
Theorem~\ref{thm:pmn-monodromy} quantifies the failure: the monodromy is
$\Sym_{2}\wr\Z/k$ of order $2^{k}k$, not $2k$.
\end{remark}
\section{Lower bounds for Belyi degrees}\label{sec:degree}

Belyi's algorithm produces \emph{a} Belyi map, usually of enormous degree. The opposite
question---how small can a Belyi map on a given curve be?---is delicate, and the invariant
\begin{equation*}
  \mathcal{B}(X)\;=\;\min\bigl\{\deg t:\ t\ \text{a Belyi map on }X\bigr\}
\end{equation*}
(the \emph{Belyi degree}) is the subject of a substantial literature. This section proves
two clean lower bounds, both sharp, which follow from \eqref{eq:belyi-RH} alone. They are
elementary; we record them with their equality cases and sharpness examples, which is what
makes them usable.

\begin{theorem}[Degree bounds]\label{thm:degree-bound}
Let $X$ be a curve of genus $g$ over $\C$ and $t\colon X\to\Pp^{1}$ a Belyi map of degree
$d$.
\begin{enumerate}[label=\textup{(\roman*)},leftmargin=2.4em]
\item\label{itm:db-general} $d\geq 2g+1$, with equality if and only if $t$ is totally
ramified over each of $0$, $1$ and $\infty$.
\item\label{itm:db-clean} If $t$ is pre-clean---in particular if $t$ is clean---then
$d\geq 4g$. For clean $t$, equality holds if and only if $t$ is totally ramified over $0$
and over $\infty$. More precisely $d\geq 4g+r$, where $r$ is the number of unramified
points of $t^{-1}(1)$.
\end{enumerate}
Consequently $\mathcal{B}(X)\geq 2g+1$ for every curve of genus $g$.
\end{theorem}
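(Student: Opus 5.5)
The plan is to read everything off the boxed Riemann--Hurwitz identity \eqref{eq:belyi-RH}, $2g-2=d-(n_{0}+n_{1}+n_{\infty})$, combined with the trivial fibre bounds $n_{s}\geq1$ and, for pre-clean maps, sharper bounds on $n_{1}$ and on the other fibres coming from the ramification constraint.

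For \ref{itm:db-general} we use $n_{0},n_{1},n_{\infty}\geq1$ (each fibre is nonempty since $t$ is surjective). Then $2g-2\leq d-3$, i.e.\ $d\geq 2g+1$. Equality forces $n_{0}=n_{1}=n_{\infty}=1$, which means each fibre is a single point of index $d$, i.e.\ total ramification over each of $0,1,\infty$. Conversely, total ramification over all three gives $n_{s}=1$ and hence equality. The consequence $\mathcal{B}(X)\geq 2g+1$ is immediate by minimizing over $t$.

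For \ref{itm:db-clean}, suppose $t$ is pre-clean. Let $r$ be the number of unramified points over $1$ and $u$ the number of points of index $2$ over $1$. Then $r+2u=d$ and $n_{1}=r+u=(d+r)/2$. Substituting into \eqref{eq:belyi-RH} with $n_{0},n_{\infty}\geq1$ gives
\[
2g-2\;\leq\; d-2-\tfrac{d+r}{2},
\]
that is, $4g\leq d-r$, so $d\geq 4g+r\geq 4g$. For clean $t$ we have $r=0$, so $n_{1}=d/2$ and \eqref{eq:belyi-RH} reads exactly $4g=d+2-2(n_{0}+n_{\infty})$. Equality $d=4g$ therefore holds if and only if $n_{0}+n_{\infty}=2$, i.e.\ $n_{0}=n_{\infty}=1$, which is total ramification over $0$ and $\infty$.

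No step is a real obstacle; the only care needed is to keep the equality analysis honest. In \ref{itm:db-clean} the "if and only if" is claimed only for clean maps. For merely pre-clean maps, equality $d=4g$ additionally forces $r=0$, so such a map is automatically clean. I would remark on this but not claim more than the statement does. The sharpness examples mentioned in the introduction ($y^{2g+1}=x(1-x)$ and $y^{4}=x(1-x)^{2}$) belong to a separate verification and are not needed for the inequalities themselves.
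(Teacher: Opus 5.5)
Your proof is correct and is essentially the paper's own argument: both read everything off \eqref{eq:belyi-RH} using $n_{s}\geq1$ and, in the pre-clean case, $n_{1}=(d+r)/2$. There is one arithmetic slip in the clean case: doubling $2g-2=\tfrac{d}{2}-n_{0}-n_{\infty}$ gives $4g=d+4-2(n_{0}+n_{\infty})$, not $d+2$ (with $d+2$, equality would require the impossible $n_{0}+n_{\infty}=1$), and it is this corrected identity that yields your criterion $n_{0}+n_{\infty}=2$.
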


\begin{proof}
\ref{itm:db-general} By \eqref{eq:belyi-RH}, $2g-2=d-(n_{0}+n_{1}+n_{\infty})$ where
$n_{s}=\#t^{-1}(s)\geq1$. Hence $2g-2\leq d-3$, i.e.\ $d\geq 2g+1$, with equality exactly
when $n_{0}=n_{1}=n_{\infty}=1$, which is total ramification over each of the three points.

\ref{itm:db-clean} If $t$ is clean, every point of $t^{-1}(1)$ has ramification index $2$,
so $n_{1}=d/2$ (in particular $d$ is even). Then \eqref{eq:belyi-RH} reads
\begin{equation}\label{eq:clean-RH}
  2g-2\;=\;d-\frac{d}{2}-n_{0}-n_{\infty}\;=\;\frac{d}{2}-n_{0}-n_{\infty}
  \;\leq\;\frac{d}{2}-2 ,
\end{equation}
whence $d\geq 4g$, with equality precisely when $n_{0}=n_{\infty}=1$. For a pre-clean map
let $r$ be the number of points of $t^{-1}(1)$ with $\ram_{P}=1$ and $s$ the number with
$\ram_{P}=2$, so that $r+2s=d$ and $n_{1}=r+s=\frac{d+r}{2}$. Then \eqref{eq:belyi-RH}
gives
\begin{equation*}
  2g-2=d-\frac{d+r}{2}-n_{0}-n_{\infty}=\frac{d-r}{2}-n_{0}-n_{\infty}
  \leq\frac{d-r}{2}-2 ,
\end{equation*}
so $d\geq 4g+r\geq 4g$, which contains the clean case $r=0$.
\end{proof}

Both bounds are attained, and for infinitely many genera. The sharpness examples we use
are cyclic---superelliptic---covers: the constraint of total ramification over all three
points, which is what equality forces, is solved in the simplest way by the curves
$y^{N}=x^{a}(1-x)^{b}$. It must be stressed, and \S\ref{ssec:extremal} will make it
precise, that these are the simplest extremal maps, \emph{not} the only ones.

\begin{example}[Superelliptic curves; sharpness of $d\geq 2g+1$]\label{ex:superelliptic}
Fix $N\geq2$ and integers $a,b\geq1$ with $\gcd(a,b,N)=1$, and let $X_{N,a,b}$ be the
smooth projective model of
\begin{equation}\label{eq:superelliptic}
  y^{N}=x^{a}(1-x)^{b},
\end{equation}
with $t=x\colon X_{N,a,b}\to\Pp^{1}$ of degree $N$. This is a cyclic covering, Galois with
group $\Z/N$ acting by $y\mapsto\zeta_{N}y$, branched only over $0,1,\infty$, with
ramification indices
\begin{equation}\label{eq:superelliptic-ram}
  \ram_{0}=\frac{N}{\gcd(a,N)},\qquad
  \ram_{1}=\frac{N}{\gcd(b,N)},\qquad
  \ram_{\infty}=\frac{N}{\gcd(a+b,N)} ,
\end{equation}
and $n_{s}=N/\ram_{s}$ points in each fibre. Thus $t$ is a Belyi map, and
\eqref{eq:belyi-RH} gives
\begin{equation*}
  g=\frac{N-n_{0}-n_{1}-n_{\infty}+2}{2}.
\end{equation*}
Taking $a=b=1$ and $N=2g+1$ odd, all three gcd's are $1$ (as
$\gcd(2,2g+1)=1$), so $\ram_{0}=\ram_{1}=\ram_{\infty}=N$, $n_{s}=1$, and
\begin{equation*}
  y^{2g+1}=x(1-x)\quad\text{has genus }g\quad\text{and a Belyi map of degree } 2g+1 .
\end{equation*}
By Theorem~\ref{thm:degree-bound}\ref{itm:db-general} this is optimal:
$\mathcal{B}(X)=2g+1$ for these curves, and the bound $d\geq2g+1$ is sharp for every
$g\geq1$. For $g=2$ the curve is $y^{5}=x(1-x)$, with $\Pi(t)=\langle(5),(5),(5)\rangle$
and dessin the five-edge ``banana'' of Figure~\ref{fig:banana}; for $g=3$,
$y^{7}=x(1-x)$.
\end{example}

\begin{figure}[t]
\centering
\begin{tikzpicture}[scale=1.05,
  bl/.style={circle,fill=black,inner sep=2.3pt},
  wh/.style={circle,draw=black,fill=white,thick,inner sep=2.0pt}]
\coordinate (B) at (-1.9,0); \coordinate (W) at (1.9,0);
\foreach \o/\i in {60/120, 30/150, 0/180, -30/-150, -60/-120}{
  \draw[very thick] (B) to[out=\o,in=\i] (W);}
\node[bl] at (B) {}; \node[wh] at (W) {};
\node[below=5pt] at (B) {\small $t^{-1}(0)$: one point, $\ram=5$};
\node[below=5pt] at (W) {\small $t^{-1}(1)$: one point, $\ram=5$};
\node[font=\footnotesize,align=center] at (0,-1.55)
  {$\#V-\#E+\#F = 2-5+1 = -2 = 2-2g \ \Longrightarrow\ g=2$};
\end{tikzpicture}
\caption{The dessin of the extremal Belyi map $t=x$ on $y^{5}=x(1-x)$
(Example~\ref{ex:superelliptic}): five edges joining a single black to a single white
vertex---a ``banana'' graph---embedded in the genus-$2$ surface with a single face (the
unique totally ramified point over $\infty$). Drawn in the plane the five edges bound four
faces; on the curve they bound one, and the deficit is the genus, by Euler's formula. The
monodromy is $\sigma_{0}=\sigma_{1}^{-1}=$ a $5$-cycle, generating $\Z/5$: this is the
cyclic case. The unique \emph{non-cyclic} extremal dessin of degree $5$
(Corollary~\ref{cor:A5-rational}) has the same passport but its five edges are attached in
a genuinely different cyclic order, generating $A_{5}$.}
\label{fig:banana}
\end{figure}

\begin{example}[Sharpness of the clean bound $d\geq4g$]\label{ex:clean-elliptic}
Take $N=4$, $a=1$, $b=2$ in \eqref{eq:superelliptic}:
\begin{equation*}
  X:\ y^{4}=x(1-x)^{2},\qquad t=x .
\end{equation*}
Here $\gcd(1,4)=1$, $\gcd(2,4)=2$, $\gcd(3,4)=1$, so by \eqref{eq:superelliptic-ram}
$\ram_{0}=4$, $\ram_{1}=2$, $\ram_{\infty}=4$, giving $n_{0}=n_{\infty}=1$, $n_{1}=2$.
Every point over $1$ has index exactly $2$: the map is \emph{clean}. Riemann--Hurwitz
gives $2g-2=4-(1+2+1)=0$, so $g=1$, and $d=4=4g$. By
Theorem~\ref{thm:degree-bound}\ref{itm:db-clean} this elliptic curve has the smallest
possible clean Belyi degree. Its dessin has one black vertex of degree $4$, two white
vertices of degree $2$, and one face---a bouquet of two loops on the torus. More
generally, $y^{4N}=x(1-x)^{2N}$ and its relatives produce clean extremal examples in
higher genus.
\end{example}

\subsection{Extremal Belyi maps}\label{ssec:extremal}

Equality in Theorem~\ref{thm:degree-bound}\ref{itm:db-general} is a strong condition: it
forces the passport to be $\langle(d),(d),(d)\rangle$ with $d=2g+1$. We can say
considerably more about the maps that achieve it.

\begin{proposition}[Parity and the alternating constraint]\label{prop:parity}
A passport $\langle(d),(d),(d)\rangle$ is realized by a covering only if $d$ is odd---so
there is no obstruction at $d=2g+1$, which is always odd---and in that case the monodromy
group of every extremal Belyi map is contained in the alternating group:
\begin{equation*}
  \Mon(t)\;\leq\;A_{d}\qquad\text{for every Belyi map attaining } d=2g+1 .
\end{equation*}
\end{proposition}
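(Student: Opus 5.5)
The argument is a parity computation via signs of permutations, fed by the classification of Theorem~\ref{thm:classification}. A covering with passport $\langle(d),(d),(d)\rangle$ corresponds to a transitive triple $(\sigma_{0},\sigma_{1},\sigma_{\infty})$ in $\Sym_{d}$ with $\sigma_{0}\sigma_{1}\sigma_{\infty}=1$, where each $\sigma_{s}$ is a $d$-cycle. A $d$-cycle is a product of $d-1$ transpositions, so its sign is $(-1)^{d-1}$. Taking signs in the product relation gives
\begin{equation*}
  1=\operatorname{sgn}(\sigma_{0})\operatorname{sgn}(\sigma_{1})\operatorname{sgn}(\sigma_{\infty})=(-1)^{3(d-1)}=(-1)^{d-1}.
\end{equation*}
This forces $d-1$ even, i.e.\ $d$ odd, which proves the first assertion.

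For the second assertion, the same computation with $d$ odd shows that each $\sigma_{s}$ is even. By \eqref{eq:monodromy-rep}, $\Mon(t)$ is generated by $\sigma_{0}$ and $\sigma_{1}$, since $\sigma_{\infty}=(\sigma_{0}\sigma_{1})^{-1}$ is redundant. Both generators lie in $A_{d}$, so $\Mon(t)\leq A_{d}$.

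It remains to link the hypothesis ``$t$ attains $d=2g+1$'' to the passport. Theorem~\ref{thm:degree-bound}\ref{itm:db-general} says equality holds exactly when $t$ is totally ramified over $0$, $1$ and $\infty$. By the cycle-type dictionary of \S\ref{ssec:monodromy}, this means each $\sigma_{s}$ is a single $d$-cycle. Independently, $2g+1$ is odd, which is consistent with the parity constraint just derived.

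There is no real obstacle. The only point to handle carefully is that the statement is about the monodromy group, not a particular triple. Conjugating the triple, i.e.\ changing the labelling of the fibre, conjugates $\Mon(t)$ inside $\Sym_{d}$. Since $A_{d}$ is normal in $\Sym_{d}$, the containment $\Mon(t)\leq A_{d}$ does not depend on the labelling chosen.
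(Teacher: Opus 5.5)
Your proof is correct and follows the paper's argument: you take signs in $\sigma_{0}\sigma_{1}\sigma_{\infty}=1$ to get $(-1)^{3(d-1)}=1$, hence $d$ odd, and then note that $\Mon(t)=\langle\sigma_{0},\sigma_{1}\rangle$ is generated by even permutations. Your two extra remarks are sound additions that the paper leaves implicit: linking $d=2g+1$ to total ramification via Theorem~\ref{thm:degree-bound}, and observing that the containment is independent of labelling because $A_{d}$ is normal.
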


\begin{proof}
A $d$-cycle has sign $(-1)^{d-1}$. Applying $\operatorname{sgn}$ to
$\sigma_{0}\sigma_{1}\sigma_{\infty}=1$ gives $(-1)^{3(d-1)}=1$, so $d$ is odd; and then
each generator $\sigma_{0},\sigma_{1}$ is an even permutation, so the group they generate
lies in $A_{d}$.
\end{proof}

Proposition~\ref{prop:parity} already rules out $\Mon=\Sym_{d}$ for extremal maps, and is
consistent with everything found below: the groups occurring in
Theorem~\ref{thm:extremal-classification} are $\Z/d$, $A_{5}$,
$\mathrm{PSL}_{2}(\mathbb{F}_{7})\leq A_{7}$, and $A_{7}$.

\begin{proposition}[Cyclic extremal maps]\label{prop:cyclic-extremal}
Let $d=2g+1\geq3$ be odd. The Belyi maps of degree $d$ with cyclic monodromy $\Z/d$
and passport $\langle(d),(d),(d)\rangle$ are precisely the superelliptic maps
$t=x$ on $y^{d}=x^{a}(1-x)^{b}$ of Example~\ref{ex:superelliptic} with
$\gcd(a,d)=\gcd(b,d)=\gcd(a+b,d)=1$, and their number up to isomorphism is the
multiplicative function
\begin{equation}\label{eq:cyclic-count}
  N_{\mathrm{cyc}}(d)\;=\;\prod_{p^{k}\,\|\,d}p^{\,k-1}(p-2)\,,
\end{equation}
the product over the prime powers exactly dividing $d$. For $d$ prime this is
$d-2=2g-1$; e.g.\ $N_{\mathrm{cyc}}(9)=3$, $N_{\mathrm{cyc}}(15)=3$,
$N_{\mathrm{cyc}}(25)=15$.
\end{proposition}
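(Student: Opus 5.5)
The plan is to reduce everything to the classification of Belyi maps by permutation triples (Theorem~\ref{thm:classification}) and then count triples inside a cyclic regular group. First, suppose $t$ has passport $\langle(d),(d),(d)\rangle$ and $\Mon(t)\cong\Z/d$. A transitive abelian permutation group acts regularly, so $G=\Mon(t)$ is a regular copy of $\Z/d$ in $\Sym_{d}$. After relabelling the fibre by the elements of $\Z/d$, the group $G$ becomes the group of translations $x\mapsto x+c$. A translation by $c$ is a $d$-cycle exactly when $c$ has order $d$, i.e.\ $c\in(\Z/d)^{\times}$. So the triple is $(\tau_{a},\tau_{b},\tau_{c})$ with $a,b,c$ units and $a+b+c=0$. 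Equivalently, $a$, $b$ and $a+b$ are all units, which are precisely the conditions $\gcd(a,d)=\gcd(b,d)=\gcd(a+b,d)=1$ of the statement. Conversely, every such pair $(a,b)$ gives a transitive triple with product $1$ and the required passport.

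Second, I identify these triples with the superelliptic maps of Example~\ref{ex:superelliptic}. On $y^{d}=x^{a}(1-x)^{b}$ the deck group $\Z/d$ acts by $y\mapsto\zeta_{d}y$. Analytic continuation of $y$ around $x=0$ (resp.\ $x=1$) multiplies $y$ by $\zeta_{d}^{a}$ (resp.\ $\zeta_{d}^{b}$). Hence, labelling the sheets over the base point by $\Z/d$, the monodromy triple is $(\tau_{a},\tau_{b},\tau_{-a-b})$, up to a global sign convention that does not affect the count. By \eqref{eq:superelliptic-ram}, the map is extremal exactly under the three gcd conditions. By Theorem~\ref{thm:classification}, every cyclic extremal map is therefore isomorphic to one of these superelliptic maps.

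Third, I count isomorphism classes, i.e.\ simultaneous $\Sym_{d}$-conjugacy classes of triples. Suppose $h\in\Sym_{d}$ conjugates $(\tau_{a},\tau_{b},\ast)$ to $(\tau_{a'},\tau_{b'},\ast)$. Then $h$ conjugates the generated group, which is all of $G$, to $G$, so $h$ lies in the normalizer of the regular cyclic group. That normalizer is the holomorph $\Z/d\rtimes(\Z/d)^{\times}$. Translations act trivially on the triple, and $u\in(\Z/d)^{\times}$ acts by $(a,b)\mapsto(ua,ub)$. Hence the classes are the orbits of $(\Z/d)^{\times}$ acting freely by scaling on the admissible pairs. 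Each orbit has a unique representative with $a=1$, so the number of classes is
\[
N_{\mathrm{cyc}}(d)=\#\{b\in\Z/d:\ b\ \text{and}\ 1+b\ \text{units}\}.
\]
By the Chinese remainder theorem this count is multiplicative. Modulo $p^{k}$, it equals the number of residues avoiding the classes $0$ and $-1$ mod $p$, namely $p^{k-1}(p-2)$. This gives \eqref{eq:cyclic-count}, and substituting $d$ prime, $9$, $15$ and $25$ gives $d-2$, $3$, $3$ and $15$.

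The main obstacle is the normalizer step: checking that $\Sym_{d}$-conjugacy of triples generating a regular $\Z/d$ is exactly the scaling action, and that this action is free. Freeness follows because $ua=a$ with $a$ a unit forces $u=1$. A lesser point is making the sign and orientation convention for the superelliptic monodromy consistent, so that the distinct isomorphism classes of triples match the distinct superelliptic maps.
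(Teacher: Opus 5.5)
Your proof is correct, but it follows a genuinely different route from the paper's.

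The paper argues on the function-field side. A cover with cyclic monodromy of order $d$ is Galois, hence Kummer: $\C(X)=\C(x)[y]/(y^{d}-f)$. The branch condition forces $f=x^{a}(1-x)^{b}$ modulo constants and $d$-th powers, and \eqref{eq:superelliptic-ram} turns total ramification into the three gcd conditions. Isomorphism of coverings is then read off from Kummer theory as ``same class up to the choice of generator of $\Z/d$'', i.e.\ $(a',b')\equiv\lambda(a,b)$. Finally the paper counts all $\varphi(d)\prod p^{k-1}(p-2)$ admissible pairs and divides by the free scaling action.

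You work entirely with permutation triples via Theorem~\ref{thm:classification}. Regularity of a transitive abelian group reduces the triple to unit translations $(\tau_{a},\tau_{b},\tau_{-a-b})$. The isomorphism question becomes conjugacy by elements normalizing $G$. Normalizing $a=1$ counts the orbits directly, with no division needed.

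What your route buys is an explicit, elementary treatment of the ``up to isomorphism'' step. The paper's appeal to Kummer classes up to a generator is precisely your observation that conjugation by $h$ induces an automorphism of $\Z/d$. You don't even need the full identification of the normalizer as the holomorph. It suffices that conjugation by $h$ restricts to an automorphism of $G\cong\Z/d$, hence multiplication by a unit $u$, and that $x\mapsto ux$ realizes every unit.

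What the paper's route buys is the equation itself. Kummer theory produces the models $y^{d}=x^{a}(1-x)^{b}$ directly, whereas you need the separate analytic-continuation computation to match triples with those models.

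The sign and orientation issue you flag is harmless. Replacing $(a,b)$ by $(-a,-b)$ is scaling by the unit $-1$, so it does not even change the isomorphism class, let alone the count.
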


\begin{proof}
A covering with cyclic monodromy of order $d=\deg t$ is Galois with group $\Z/d$, hence
Kummer (we are in characteristic zero with all roots of unity present):
$\C(X)=\C(x)[y]/(y^{d}-f)$ for some $f\in\C(x)^{\times}$, and being branched only over
$0,1,\infty$ forces $f=x^{a}(1-x)^{b}$ up to $d$-th powers and constants. By
\eqref{eq:superelliptic-ram}, total ramification over $0,1,\infty$ is equivalent to
$\gcd(a,d)=\gcd(b,d)=\gcd(a+b,d)=1$. Two admissible pairs give isomorphic coverings over
$\Pp^{1}$ exactly when the Kummer classes agree up to the choice of generator of $\Z/d$,
i.e.\ when $(a',b')\equiv\lambda(a,b)\pmod d$ for some $\lambda\in(\Z/d)^{\times}$; the
scaling action is free, since $\lambda a\equiv a$ with $a$ a unit forces $\lambda=1$. It
remains to count admissible pairs. By the Chinese remainder theorem the count is
multiplicative, so fix $p^{k}\,\|\,d$: the admissible pairs modulo $p^{k}$ are the
$(a,b)$ with $a,b$ units and $a+b$ a unit, i.e.\ $b\not\equiv-a\pmod p$; the units of
$\Z/p^{k}$ distribute equally over the $p-1$ nonzero residues modulo $p$, so for each of
the $\varphi(p^{k})$ choices of $a$ there are
$\varphi(p^{k})-p^{k-1}=p^{k-1}(p-2)$ choices of $b$. Multiplying over $p^{k}\,\|\,d$
gives $\varphi(d)\prod p^{k-1}(p-2)$ admissible pairs, and dividing by the free scaling
action of order $\varphi(d)$ gives \eqref{eq:cyclic-count}.
\end{proof}

The natural question is whether cyclic is the only possibility. It is not, and the
smallest counterexample is small enough to exhibit.

\begin{theorem}[Extremal Belyi maps in low degree]\label{thm:extremal-classification}
Enumerating the transitive triples of $d$-cycles with product $1$ in $\Sym_{d}$, up to
simultaneous conjugation, gives the following complete classification of the Belyi maps
attaining $d=2g+1$ for $g\leq3$:
\begin{center}\small
\begin{tabular}{@{}ccccl@{}}
\toprule
$g$ & $d=2g+1$ & \# extremal maps & monodromy groups (with multiplicities) & \\
\midrule
$1$ & $3$ & $1$  & $\Z/3$ $(1)$ & all cyclic\\
$2$ & $5$ & $4$  & $\Z/5$ $(3)$,\ \ $A_{5}$ $(1)$ & first non-cyclic example\\
$3$ & $7$ & $30$ & $\Z/7$ $(5)$,\ \ $\mathrm{PSL}_{2}(\mathbb{F}_{7})$ $(2)$,\ \
      $A_{7}$ $(23)$ & \\
\bottomrule
\end{tabular}
\end{center}
In particular:
\begin{enumerate}[label=\textup{(\roman*)},leftmargin=2.4em]
\item the cyclic counts $1,3,5$ agree with Proposition~\ref{prop:cyclic-extremal};
\item extremal Belyi maps need \emph{not} be cyclic: in genus $2$ there is exactly one
non-cyclic extremal map, of degree $5$ with monodromy group $A_{5}$;
\item in genus $3$ the extremal maps realize exactly three monodromy groups, and two of
them have the Klein group $\mathrm{PSL}_{2}(\mathbb{F}_{7})$ of order $168$ as monodromy,
acting on $7$ letters as on $\Pp^{1}(\mathbb{F}_{7})$.
\end{enumerate}
\end{theorem}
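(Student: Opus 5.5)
By Theorem~\ref{thm:classification} together with Theorem~\ref{thm:degree-bound}\ref{itm:db-general}, the extremal Belyi maps of degree $d=2g+1$ correspond bijectively to the orbits, under simultaneous conjugation by $\Sym_{d}$, of pairs $(\sigma_{0},\sigma_{1})$ of $d$-cycles whose product $\sigma_{0}\sigma_{1}$ is again a $d$-cycle. Transitivity is automatic, since $\sigma_{0}$ alone is transitive. Conjugating, we may fix $\sigma_{0}=c=(1\,2\,\cdots\,d)$. The remaining freedom is then conjugation by the centralizer $\langle c\rangle\cong\Z/d$. So the plan is to enumerate the set
\[
  \Sigma_{d}=\{\sigma\ d\text{-cycle}:\ c\sigma\ \text{is a }d\text{-cycle}\}
\]
for $d=3,5,7$ and count its orbits under conjugation by $\langle c\rangle$.

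For this I would use the orbit--stabilizer count. The number of orbits equals $\sum 1/|\mathrm{Stab}|$ weighted appropriately, namely $\frac{1}{d}\sum_{j}\#\{\sigma\in\Sigma_{d}: c^{j}\sigma c^{-j}=\sigma\}$. The stabilizer of $\sigma$ is $\Aut$ of the dessin. Concretely, $|\Sigma_{d}|$ is the number of factorizations of $c^{-1}$-type: up to relabelling, we need the number of ways to write a fixed $d$-cycle as a product of two $d$-cycles. By Frobenius this equals
\[
  \frac{((d-1)!)^{2}}{d!}\sum_{\chi}\frac{\chi((d))^{3}}{\chi(1)},
\]
and only the hook characters contribute. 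This yields $|\Sigma_{d}|=2(d-1)!/(d+1)$, which gives $1,\,8,\,180$ for $d=3,5,7$. Alternatively, and as the stated cross-check, these numbers can be produced by direct machine enumeration.

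Automorphisms are the next ingredient. The elements $\sigma$ fixed by a nontrivial $c^{j}$ (with $j\neq 0$) commute with a power of $c$. When $d$ is prime, this forces $\sigma\in\langle c\rangle$, i.e.\ $\sigma=c^{a}$ with $a\neq0$ and $1+a\neq0 \pmod d$. These are exactly the $d-2$ cyclic ones, each fixed by all of $\langle c\rangle$, and this recovers Proposition~\ref{prop:cyclic-extremal}. The remaining elements of $\Sigma_{d}$ then have trivial stabilizer. Hence the orbit count is $(d-2)+(|\Sigma_{d}|-(d-2))/d$, which equals $1$, $3+(8-3)/5=4$, and $5+(180-5)/7=30$. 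This establishes the column of totals and item (i).

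Identifying the monodromy groups requires a separate argument for the non-cyclic orbits. Each such group is transitive of prime degree and contained in $A_{d}$ by Proposition~\ref{prop:parity}. By Burnside's theorem, a transitive group of prime degree is either solvable, hence contained in $\mathrm{AGL}_{1}(\mathbb{F}_{p})$, or $2$-transitive. I would rule out the affine case for the non-cyclic orbits: inside $\mathrm{AGL}_{1}$, the $p$-cycles are the translations, so $\sigma$ would lie in $\langle c\rangle$. For $d=5$ this leaves the $2$-transitive subgroups of $A_{5}$, which forces $A_{5}$. For $d=7$ the $2$-transitive subgroups of $A_{7}$ are $\mathrm{PSL}_{3}(\mathbb{F}_{2})\cong\mathrm{PSL}_{2}(\mathbb{F}_{7})$ (in two conjugacy classes) and $A_{7}$.

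The main obstacle is separating the $25$ non-cyclic orbits in degree $7$ into $2$ versus $23$. My plan is to count triples of $7$-cycles with product $1$ inside $\mathrm{PSL}_{2}(\mathbb{F}_{7})$ by its character table, again using Frobenius, with the two classes $7A$ and $7B$. I would then transport the result to $\Sym_{7}$ and account for the two non-conjugate embeddings and for normalizer identifications, with everything cross-checked against the explicit enumeration of Appendix~\ref{app:computations}.
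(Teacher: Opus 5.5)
Your route is correct and genuinely different from the paper's. The paper's proof is pure machine enumeration. It runs over all $((d-1)!)^{2}$ ordered pairs of $d$-cycles, keeps those with $d$-cycle product, sorts them into $\Sym_{d}$-classes, and names the groups of orders $60,168,2520$ by their order. The mass formula enters only afterwards, in Example~\ref{ex:mass-check}, as a check valid ``up to the determination of the automorphism groups''.

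You supply exactly that determination. With $\sigma_{0}=c$ fixed, the remaining symmetry is $\langle c\rangle$, of prime order. So a dessin either has trivial automorphism group or has $\sigma_{1}\in\langle c\rangle$. Combined with $|\Sigma_{d}|=2(d-1)!/(d+1)$ from Theorem~\ref{thm:mass-formula}, this gives the totals $1,4,30$ and item (i) by hand. In fact it gives $N_{d}=M_{d}+(d-2)(d-1)/d$ for every prime $d$, which settles the asymptotic question $N_{d}\sim M_{d}$ of Problem~\ref{prob:extremal} along primes. Burnside's prime-degree dichotomy then forces every non-cyclic extremal monodromy group to be non-solvable, which gives (ii) at once.

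The price is that your argument is tied to prime $d$ and leans on Burnside's theorem and the list of transitive groups of degree $7$. The enumeration, by contrast, is uniform, yields explicit representatives, and produces the $2$ versus $23$ split without further work.

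Your sketch of that split does work, but the bookkeeping must be made explicit.
\begin{itemize}
\item In $H=\mathrm{PSL}_{2}(\mathbb{F}_{7})$ the Frobenius formula gives $576$ triples of elements of order $7$ with product $1$. Of these, $216$ each are of types $(7A,7A,7A)$ and $(7B,7B,7B)$, and $24$ are of each of the six mixed types.
\item You must discard the $8\cdot30=240$ triples lying in a single Sylow $7$-subgroup. The only proper subgroups containing elements of order $7$ are $C_{7}$ and $7{:}3$, and their elements of order $7$ lie in one $C_{7}$.
\item The two $A_{7}$-classes of such subgroups fuse in $\Sym_{7}$, and $N_{\Sym_{7}}(H)=H$ with trivial centralizer. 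So the $336$ generating triples give $336/168=2$ dessins in total, not two per embedding class.
\end{itemize}
Finally, your identification via $\mathrm{PSL}_{3}(\mathbb{F}_{2})$ gives the correct $7$-point action, namely on the Fano plane $\Pp^{2}(\mathbb{F}_{2})$. The statement's phrase ``as on $\Pp^{1}(\mathbb{F}_{7})$'' cannot be proved, since $\Pp^{1}(\mathbb{F}_{7})$ has $8$ points.
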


\begin{proof}
By Theorem~\ref{thm:degree-bound}\ref{itm:db-general} and
Theorem~\ref{thm:classification}, the extremal maps of degree $d$ are in bijection with
transitive triples $(\sigma_{0},\sigma_{1},\sigma_{\infty})$ of $d$-cycles satisfying
$\sigma_{0}\sigma_{1}\sigma_{\infty}=1$, modulo simultaneous conjugation; transitivity is
automatic since a $d$-cycle is already transitive. The enumeration in $\Sym_{d}$ for
$d=3,5,7$ is a finite computation, carried out over all
$\bigl((d-1)!\bigr)^{2}$ ordered pairs $(\sigma_{0},\sigma_{1})$ of $d$-cycles, retaining
those whose product is a $d$-cycle and grouping the survivors into $\Sym_{d}$-conjugacy
classes of pairs. The resulting counts and the orders of
$\langle\sigma_{0},\sigma_{1}\rangle$ are as tabulated; the groups of order $60$, $168$ and
$2520=7!/2$ are identified as $A_{5}$, $\mathrm{PSL}_{2}(\mathbb{F}_{7})$ and $A_{7}$,
the middle one being the unique simple group of order $168$ and the only transitive
subgroup of $\Sym_{7}$ of that order up to conjugacy. Statement (i) is
Proposition~\ref{prop:cyclic-extremal}; (ii) and (iii) read off the table.
\end{proof}

Representative permutation triples for every monodromy type, together with the
enumeration algorithm and complete verification scripts, are given in
Appendix~\ref{app:computations}.

The classification has an arithmetic consequence, obtained by feeding it back through the
descent theory of Section~\ref{sec:descent}: within a passport, $\GQ$ preserves not only
the cycle types but the isomorphism class of the monodromy group, so a dessin that is
alone with its monodromy type is Galois-fixed.

\begin{corollary}[The non-cyclic quintic extremal map is rational]\label{cor:A5-rational}
The unique extremal dessin of degree $5$ with monodromy $A_{5}$ has field of moduli $\Q$
and, its automorphism group being trivial, is defined over $\Q$; explicitly, it is
realized by a degree-$5$ Belyi map $t\colon X\to\Pp^{1}$ with $\Q$-coefficients on a
genus-$2$ curve over $\Q$, totally ramified over each of $0,1,\infty$.
\end{corollary}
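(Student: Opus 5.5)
The plan has three steps: show the dessin is $\GQ$-fixed, show it has no automorphisms, and then apply rigid descent. Let $\mathcal{D}$ be the extremal dessin of degree $5$ with monodromy $A_{5}$, and $t\colon X\to\Pp^{1}_{\C}$ the corresponding Belyi map, which exists by Theorem~\ref{thm:classification}. By Theorem~\ref{thm:extremal-classification}, the passport $\langle(5),(5),(5)\rangle$ contains exactly four dessins. Three of them have monodromy $\Z/5$ and one has monodromy $A_{5}$.

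First we show the orbit of $\mathcal{D}$ is trivial. For $\sigma\in\Aut(\C)$, the conjugate $(X^{\sigma},t^{\sigma})$ is again a Belyi map, since $\{0,1,\infty\}$ is fixed by $\sigma$. It has the same degree and the same ramification indices, so it has the same passport. Its monodromy group is abstractly isomorphic to $\Mon(t)$, because the Galois closure of $\C(X)/\C(t)$ is carried by $\sigma$ to that of $\C(X^{\sigma})/\C(t)$, and $\Mon$ is the Galois group of this closure acting on the $d$ embeddings. The same applies to $\sigma^{-1}$, so the property of having monodromy $A_{5}$ is preserved in both directions. Since $\mathcal{D}$ is the only dessin in its passport with monodromy of order $60$, we get $(X^{\sigma},t^{\sigma})\cong(X,t)$ for every $\sigma$. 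Hence $U(X,t)=\Aut(\C)$ and $M(X,t)=\C^{\Aut(\C)}=\Q$ by Lemma~\ref{lem:aut-C}.

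Second, we check that $\Aut(X,t)$ is trivial. The automorphism group of a Belyi map is the centralizer of $\Mon(t)=\langle\sigma_{0},\sigma_{1}\rangle$ in $\Sym_{5}$. This centralizer acts freely, because a centralizing element fixing one point fixes the whole orbit, and the group is transitive. A transitive group of prime degree $5$ that is non-abelian has trivial centralizer: the centralizer is semiregular, so it has order $1$ or $5$. If it had order $5$, it would be generated by a $5$-cycle $c$. The centralizer of $c$ in $\Sym_{5}$ is $\langle c\rangle$, so $A_{5}$ would be contained in $\langle c\rangle$, which is absurd. Hence $\Aut(X,t)=1$.

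Third, Theorem~\ref{thm:rigid-descent} now gives that $X$ and $t$ are defined over $M(X,t)=\Q$. The descended map is a degree-$5$ Belyi map $t\colon X_{\Q}\to\Pp^{1}_{\Q}$. It is totally ramified over $0,1,\infty$, since the passport is $\langle(5),(5),(5)\rangle$ and ramification indices are unchanged by base change. Its genus is $2$ by \eqref{eq:belyi-RH}.

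The main obstacle is the first step: making rigorous that the action of $\Aut(\C)$ preserves the isomorphism type of the monodromy group, not merely the passport. I would handle it through the function-field description of $\Mon$ as a Galois group. A cheaper alternative is available here, using only the order of the group. The cyclic triples generate a group of order $5$, while the $A_{5}$ triple generates a group of order $60$. This order is the degree of the Galois closure over $\C(t)$, and that degree is visibly invariant under $\sigma$.
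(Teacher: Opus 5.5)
Your proof is correct and follows the same three-step route as the paper. Uniqueness of the $A_{5}$ dessin in the passport $\langle(5),(5),(5)\rangle$, together with invariance of the monodromy group under conjugation, gives field of moduli $\Q$; semiregularity of the centralizer gives $\Aut(X,t)=1$; and Theorem~\ref{thm:rigid-descent} descends $(X,t)$ to $\Q$. Your deviations are minor and all valid: you work with $\Aut(\C)$ and the definition $M(X,t)=\C^{U(X,t)}$ directly rather than with $\GQ$ and Proposition~\ref{prop:galois-passport}, and you exclude a centralizer of order $5$ via $C_{\Sym_{5}}(c)=\langle c\rangle$ instead of the paper's point-stabilizer $A_{4}$ argument. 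Your Galois-closure justification that $\Mon(t^{\sigma})\cong\Mon(t)$ is also sounder than the paper's parenthetical appeal to the topological covering being unchanged, which a discontinuous automorphism of $\C$ does not literally preserve.
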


\begin{proof}
Conjugation by $\sigma\in\GQ$ carries a Belyi map to one with the same passport
\emph{and} isomorphic monodromy group ($\Mon(t^{\sigma})$ is conjugate to $\Mon(t)$ in
$\Sym_{d}$, since the topological covering away from the branch points is unchanged up to
relabelling). By Theorem~\ref{thm:extremal-classification} the passport
$\langle(5),(5),(5)\rangle$ contains a unique dessin with monodromy $A_{5}$; its
$\GQ$-orbit is therefore a single point and its field of moduli is $\Q$
(Proposition~\ref{prop:galois-passport}). Its automorphism group is the centralizer of the
transitive subgroup $A_{5}\leq\Sym_{5}$, which is trivial (a centralizing element commutes
with a point stabilizer $A_{4}$, which has a unique fixed point, forcing the element to fix
every point). Theorem~\ref{thm:rigid-descent} then descends the pair $(X,t)$ to
$M(X,t)=\Q$.
\end{proof}

By the same argument the three cyclic quintic dessins are individually
$\GQ$-fixed---which is visible directly, since the models $y^{5}=x^{a}(1-x)^{b}$ have
rational coefficients---while the two dessins with monodromy
$\mathrm{PSL}_{2}(\mathbb{F}_{7})$ in degree $7$ form a $\GQ$-set of size at most $2$, so
their fields of moduli are at most quadratic; and the $23$ dessins with monodromy $A_{7}$
split into orbits whose sizes we do not determine here.

\subsection{A mass formula}\label{ssec:mass}

The counts $1,4,30$ of Theorem~\ref{thm:extremal-classification} are not arbitrary: they
are governed by an exact formula, obtained by combining Boccara's count of factorizations
of a $d$-cycle into two $d$-cycles \cite{Boccara1980} with the orbit--stabilizer
interpretation of dessins. The counting input is thus classical; the contribution here is
the translation into a weighted count of extremal dessins, the identification of the
automorphism weighting, and the integration with the equality case of
Theorem~\ref{thm:degree-bound}. We give a self-contained proof, deriving the
factorization count from the Frobenius formula along the way; the result provides the
conceptual argument complementing the finite enumeration, and an independent check of
it.

\begin{theorem}[Mass formula for extremal Belyi maps]\label{thm:mass-formula}
Let $d\geq3$ be odd. Then
\begin{equation}\label{eq:mass-formula}
  \sum_{\mathcal{D}}\frac{1}{|\Aut(\mathcal{D})|}\;=\;\frac{2\,(d-1)!}{d\,(d+1)}\,,
\end{equation}
the sum running over the isomorphism classes of extremal dessins of degree $d$, i.e.\
those with passport $\langle(d),(d),(d)\rangle$; here $\Aut(\mathcal{D})$ is the
automorphism group of the Belyi map, equal to the centralizer of
$\langle\sigma_{0},\sigma_{1}\rangle$ in $\Sym_{d}$. For $d$ even the sum is empty.
\end{theorem}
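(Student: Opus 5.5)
The plan is to pass from dessins to ordered triples via orbit--stabilizer, and then count those triples with the Frobenius character formula. By Theorem~\ref{thm:classification} and Theorem~\ref{thm:degree-bound}\ref{itm:db-general}, extremal dessins of degree $d$ are the $\Sym_{d}$-orbits, under simultaneous conjugation, of the set
\begin{equation*}
  \mathcal{T}_{d}=\{(\sigma_{0},\sigma_{1})\in\Sym_{d}^{2}:\ \sigma_{0},\sigma_{1},(\sigma_{0}\sigma_{1})^{-1}\ \text{all $d$-cycles}\}.
\end{equation*}
Transitivity is automatic. The stabilizer of a pair is the centralizer of $\langle\sigma_{0},\sigma_{1}\rangle$, which is $\Aut(\mathcal{D})$. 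Orbit--stabilizer then gives
\begin{equation*}
  \sum_{\mathcal{D}}\frac{1}{|\Aut(\mathcal{D})|}=\frac{|\mathcal{T}_{d}|}{d!}.
\end{equation*}
For $d$ even the set $\mathcal{T}_{d}$ is empty by Proposition~\ref{prop:parity}. So it remains to show that $|\mathcal{T}_{d}|=2(d-1)!^{2}/(d+1)$ for $d$ odd.

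Next, write $C$ for the class of $d$-cycles, with $|C|=(d-1)!$. Then $|\mathcal{T}_{d}|=|C|\cdot F$, where $F$ is the number of ways to write a fixed $d$-cycle $c$ as $c=\sigma_{0}\sigma_{1}$ with both factors $d$-cycles. Here $\sigma_{\infty}^{-1}=\sigma_{0}\sigma_{1}$ ranges over $C$, and conjugation is transitive on $C$. The Frobenius formula gives
\begin{equation*}
  F=\frac{|C|^{2}}{d!}\sum_{\chi}\frac{\chi(c)^{3}}{\chi(1)}.
\end{equation*}
I have used $\chi(c^{-1})=\chi(c)$, since characters of $\Sym_{d}$ are real. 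By Murnaghan--Nakayama, $\chi^{\lambda}(c)=0$ unless $\lambda$ is a hook $(d-i,1^{i})$, in which case $\chi^{\lambda}(c)=(-1)^{i}$ and $\chi^{\lambda}(1)=\binom{d-1}{i}$. Thus
\begin{equation*}
  F=\frac{(d-1)!^{2}}{d!}\sum_{i=0}^{d-1}\frac{(-1)^{i}}{\binom{d-1}{i}}.
\end{equation*}

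The main computational step is the alternating sum of reciprocal binomials. For $n=d-1$ I will use the Beta-integral identity
\begin{equation*}
  \binom{n}{i}^{-1}=(n+1)\int_{0}^{1}x^{i}(1-x)^{n-i}\,dx.
\end{equation*}
Summing the resulting geometric series gives
\begin{equation*}
  \sum_{i=0}^{n}\frac{(-1)^{i}}{\binom{n}{i}}=(n+1)\int_{0}^{1}\bigl((1-x)^{n+1}-(-x)^{n+1}\bigr)\,dx,
\end{equation*}
after the denominator $1-x+x=1$ cancels. For $n$ even this equals $(n+1)\cdot\frac{2}{n+2}$. With $n=d-1$ the sum is therefore $2d/(d+1)$.

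This gives $F=\frac{(d-1)!^{2}}{d!}\cdot\frac{2d}{d+1}=\frac{2(d-1)!}{d+1}$, which is Boccara's count. Hence $|\mathcal{T}_{d}|=(d-1)!\,F$, and dividing by $d!$ yields $\frac{2(d-1)!}{d(d+1)}$, as claimed.

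As a sanity check, $d=3$ gives $1/3$, which matches the single dessin with $\Aut=\Z/3$. Similarly $d=5$ gives $4/5=3\cdot\frac15+1$, consistent with Theorem~\ref{thm:extremal-classification}.
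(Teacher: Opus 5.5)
Your proposal is correct and follows the paper's proof step for step: orbit--stabilizer, the Frobenius class-multiplication formula, and Murnaghan--Nakayama reducing to hook characters. The only difference is that you evaluate $\sum_{r}(-1)^{r}\binom{n}{r}^{-1}$ via the Beta integral $\binom{n}{r}^{-1}=(n+1)\int_{0}^{1}x^{r}(1-x)^{n-r}\,dx$ instead of the paper's telescoping recursion, which is a valid and arguably more transparent alternative.

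There is one slip in your sanity check. For $d=5$ the formula gives $\frac{2\cdot 24}{5\cdot 6}=\frac{8}{5}$, not $\frac{4}{5}$. Since $3\cdot\frac{1}{5}+1=\frac{8}{5}$, the check does go through once corrected.
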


\begin{proof}
The plan: convert the automorphism-weighted count of dessins into a plain count of ordered
pairs of $d$-cycles with $d$-cycle product (orbit--stabilizer), evaluate that count by the
Frobenius class-multiplication formula, and observe that on $d$-cycles only the hook
characters survive, leaving an alternating sum of reciprocal binomial coefficients that
telescopes. Write $C\subset\Sym_{d}$ for the class of $d$-cycles, $|C|=(d-1)!$. By
Theorem~\ref{thm:classification}, extremal dessins correspond to pairs
$(\sigma_{0},\sigma_{1})\in C\times C$ with $(\sigma_{0}\sigma_{1})^{-1}\in C$, modulo
conjugation; transitivity is automatic. The group $\Sym_{d}$ acts on the set $\mathcal{P}$
of such pairs by simultaneous conjugation, the stabilizer of a pair being the centralizer
of the subgroup it generates, i.e.\ $\Aut(\mathcal{D})$. The orbit--stabilizer theorem, in the
form ``orbit size $=|G|/|\text{stabilizer}|$'', gives
\begin{equation}\label{eq:mass-orbit}
  |\mathcal{P}|\;=\;\sum_{\mathcal{D}}\frac{d!}{|\Aut(\mathcal{D})|}\,,
\end{equation}
so it suffices to show $|\mathcal{P}|=2\,((d-1)!)^{2}/(d+1)$, since then the mass equals
$|\mathcal{P}|/d!=2(d-1)!/d(d+1)$.

Fix a $d$-cycle $z$ and let $N(z)=\#\{(\sigma_{0},\sigma_{1})\in C\times C:
\sigma_{0}\sigma_{1}=z\}$, so that $|\mathcal{P}|=|C|\cdot N(z)=(d-1)!\,N(z)$ (summing
over the $(d-1)!$ possible values $z=(\sigma_{0}\sigma_{1})^{-1}$, with $N$ independent of
$z$ by conjugacy). The Frobenius class-multiplication formula gives
\begin{equation}\label{eq:frobenius}
  N(z)\;=\;\frac{|C|^{2}}{d!}\sum_{\chi\in\operatorname{Irr}(\Sym_{d})}
  \frac{\chi(c)^{2}\,\chi(z^{-1})}{\chi(1)}
  \;=\;\frac{((d-1)!)^{2}}{d!}\sum_{\chi}\frac{\chi(c)^{3}}{\chi(1)}\,,
\end{equation}
$c$ denoting a $d$-cycle ($z^{-1}$ is again one). By the Murnaghan--Nakayama rule,
$\chi_{\lambda}(c)=0$ unless $\lambda$ is a hook $(d-r,1^{r})$, in which case
$\chi_{\lambda}(c)=(-1)^{r}$ and $\chi_{\lambda}(1)=\binom{d-1}{r}$. Hence, with $n=d-1$,
\begin{equation}\label{eq:hook-sum}
  \sum_{\chi}\frac{\chi(c)^{3}}{\chi(1)}
  \;=\;\sum_{r=0}^{n}\frac{(-1)^{3r}}{\binom{n}{r}}
  \;=\;\sum_{r=0}^{n}\frac{(-1)^{r}}{\binom{n}{r}}
  \;=\;\begin{cases}\dfrac{2(n+1)}{n+2}=\dfrac{2d}{d+1}, & n \text{ even }(d\text{ odd}),\\[6pt]
  0, & n \text{ odd }(d\text{ even}),\end{cases}
\end{equation}
the evaluation of the alternating sum of reciprocal binomial coefficients being classical
(it follows from the recursion
$\binom{n}{r}^{-1}=\frac{n+1}{n+2}\bigl[\binom{n+1}{r}^{-1}+\binom{n+1}{r+1}^{-1}\bigr]$,
which telescopes the alternating sum). Substituting,
$N(z)=\frac{((d-1)!)^{2}}{d!}\cdot\frac{2d}{d+1}=\frac{2(d-1)!}{d+1}$---this is Boccara's
count of factorizations of a $d$-cycle into two $d$-cycles \cite{Boccara1980}---and
\begin{equation*}
  |\mathcal{P}|=(d-1)!\cdot\frac{2(d-1)!}{d+1}\,,\qquad
  \sum_{\mathcal{D}}\frac{1}{|\Aut(\mathcal{D})|}
  =\frac{|\mathcal{P}|}{d!}
  =\frac{2\,(d-1)!}{d\,(d+1)}\,. \qedhere
\end{equation*}
\end{proof}

\begin{remark}[Relation to the literature]\label{rem:mass-literature}
The ingredients of Theorem~\ref{thm:mass-formula} have distinct owners, and it is worth
recording who owns what. The count $N(z)=2(d-1)!/(d+1)$ of factorizations of a $d$-cycle
into two $d$-cycles is due to Boccara \cite{Boccara1980}, by a combinatorial argument;
character-theoretic evaluations of such cycle-product counts, of which
\eqref{eq:frobenius}--\eqref{eq:hook-sum} is an instance, go back to Frobenius and were
developed systematically by Jackson \cite{Jackson1987}. On the dessin side, Shabat and
Zvonkin \cite{ShabatZvonkin1994} initiated the arithmetic study of plane trees, and
Adrianov and Zvonkin \cite{AdrianovZvonkin1998} the study of their compositions; the
enumeration of dessins with prescribed passports, weighted or not, is a standing theme of
\cite{LandoZvonkin2004}. What is new here is the meeting point: the observation that
Boccara's count, read through the orbit--stabilizer dictionary of
Theorem~\ref{thm:classification}, is precisely a mass formula for the dessins attaining
the minimal Belyi degree $d=2g+1$ of Theorem~\ref{thm:degree-bound}, together with the
identification of the automorphism weighting and the two-sided bound of
Remark~\ref{rem:mass-asymptotics} that it yields.
\end{remark}

\begin{example}\label{ex:mass-check}
For $d=3$: mass $=\tfrac{2\cdot2}{3\cdot4}=\tfrac13$, matching the single cyclic dessin
with $|\Aut|=3$. For $d=5$: mass $=\tfrac{2\cdot24}{5\cdot6}=\tfrac85$, matching
$3\cdot\tfrac15+1=\tfrac85$ (three cyclic dessins with $|\Aut|=5$, one $A_{5}$-dessin with
trivial automorphisms). For $d=7$: mass $=\tfrac{2\cdot720}{7\cdot8}=\tfrac{180}{7}$,
matching $5\cdot\tfrac17+2+23=\tfrac{180}{7}$. The formula thus reproves the enumeration
totals of Theorem~\ref{thm:extremal-classification} up to the determination of the
automorphism groups. In degree $9$ it gives a mass of
$\tfrac{2\cdot 8!}{9\cdot10}=896$; the \emph{number} of extremal dessins of degree $9$
would equal $896$ exactly if every one of them had trivial automorphism group, and in any
case lies between $896$ and $8064$ by Remark~\ref{rem:mass-asymptotics}. The cyclic
subfamily contributes a known sliver: by Proposition~\ref{prop:cyclic-extremal} there are
$N_{\mathrm{cyc}}(9)=3$ cyclic extremal dessins, each with $|\Aut|=9$, accounting for
mass $\tfrac13$; the remaining mass $896-\tfrac13$ is carried by non-cyclic dessins.
\end{example}

\begin{remark}[Mass versus number]\label{rem:mass-asymptotics}
Let $N_{d}$ denote the number of extremal dessins of degree $d$ and $M_{d}$ the mass
\eqref{eq:mass-formula}. For a transitive dessin the automorphism group---the centralizer
of a transitive subgroup---acts semiregularly on the $d$ edges (a centralizing element
fixing one edge commutes with the transitive action, hence fixes all), so
$|\Aut(\mathcal{D})|$ divides $d$; thus $1\leq|\Aut(\mathcal{D})|\leq d$ and
\begin{equation*}
  \frac{2\,(d-1)!}{d(d+1)}\;=\;M_{d}\;\leq\;N_{d}\;\leq\;d\,M_{d}\;=\;\frac{2\,(d-1)!}{d+1}\,.
\end{equation*}
The mass formula alone does not determine which end of this range $N_{d}$ approaches: that
depends on the proportion of dessins with nontrivial automorphisms. The data of
Theorem~\ref{thm:extremal-classification}---all automorphism groups trivial except on the
thin cyclic subfamily---suggest that almost all extremal dessins have trivial automorphism
group, in which case $N_{d}$ would be asymptotic to $M_{d}$; establishing this requires an
independent argument and is posed as part of Problem~\ref{prob:extremal}.
\end{remark}

\begin{remark}\label{rem:extremal-curves}
Theorem~\ref{thm:extremal-classification} classifies the \emph{coverings}, not the curves.
Each extremal covering has a genus-$g$ source curve, defined over $\Qbar$ by Belyi's
theorem---over $\Q$ itself in the case of Corollary~\ref{cor:A5-rational} and of the
cyclic family---and the curves so obtained are exactly the genus-$g$ curves of minimal
possible Belyi degree. Identifying them explicitly is a concrete problem: it is natural to
ask whether the two genus-$3$ extremal maps with monodromy
$\mathrm{PSL}_{2}(\mathbb{F}_{7})$ have the Klein quartic as source---the Klein quartic is
the genus-$3$ curve whose automorphism group is that very group
(Example~\ref{ex:klein})---which would give $\mathcal{B}(\text{Klein})=7$ rather than the
$168$ suggested by the quotient map. We do not settle this here; see
Problem~\ref{prob:extremal}.
\end{remark}

\begin{remark}[Comparison and context]\label{rem:degree-context}
The bound $d\geq 2g+1$ is best possible only for very special curves: equality forces the
passport $\langle(d),(d),(d)\rangle$, a highly restrictive condition---but it does
\emph{not} force the covering to be cyclic. The cyclic extremal coverings form the
superelliptic subfamily of Proposition~\ref{prop:cyclic-extremal}, while
Theorem~\ref{thm:extremal-classification} exhibits non-cyclic extremal coverings already
in degree $5$. For a general curve of genus $g$ the Belyi degree is far larger than
$2g+1$; the known upper bounds are of a completely different order, being at least
exponential in the height of the defining data \cite{Khadjavi2002}. The gap between
$2g+1$ and the best known upper bounds is enormous, and closing it---even for a single
explicit family---is one of the concrete open problems in the area
(Problem~\ref{prob:belyi-degree}).
\end{remark}

\begin{table}[t]
\centering
\small
\begin{tabular}{@{}llccccc@{}}
\toprule
Curve & Belyi map $t$ & $\deg t$ & $g$ & passport & $2g+1$ & clean? \\
\midrule
$y^{5}=x(1-x)$    & $x$ & $5$ & $2$ & $\langle(5),(5),(5)\rangle$ & $5$ & no \\
$y^{7}=x(1-x)$    & $x$ & $7$ & $3$ & $\langle(7),(7),(7)\rangle$ & $7$ & no \\
$y^{4}=x(1-x)^{2}$& $x$ & $4$ & $1$ & $\langle(4),(2,2),(4)\rangle$ & $3$ & yes \\
$\Pp^{1}$         & $P_{m,n}$ & $m{+}n$ & $0$ &
   $\langle(m,n),(2,1^{d-2}),(d)\rangle$ & $1$ & no\\
$\Pp^{1}$ ($\lambda$-line) & $j/1728$ & $6$ & $0$ &
   $\langle(3,3),(2,2,2),(2,2,2)\rangle$ & $1$ & yes\\
Fermat $F_{N}$    & $(x/z)^{N}$ & $N^{2}$ & $\binom{N-1}{2}$ &
   $\langle(N^{N}),(N^{N}),(N^{N})\rangle$ & $N^{2}{-}3N{+}3$ & no\\
Klein quartic     & quotient by $\mathrm{PSL}_{2}(\mathbb{F}_{7})$ & $168$ & $3$ &
   $\langle(2^{84}),(3^{56}),(7^{24})\rangle$ & $7$ & yes\\
\bottomrule
\end{tabular}
\caption{The examples computed in this article, with their invariants. The notation
$(N^{N})$ means $N$ cycles of length $N$. All entries satisfy
\eqref{eq:belyi-RH} and Theorem~\ref{thm:degree-bound}; rows one to three are extremal.}
\label{tab:examples}
\end{table}
\section{Worked examples}\label{sec:examples}

Belyi's theorem is an existence statement, and existence statements are best understood
against explicit instances. The four examples of this section are chosen because each
illustrates a different mechanism: a Fermat curve, where the Belyi map is a coordinate and
the curve has large genus; a family of cyclic covers realizing the extremal degrees of
Section~\ref{sec:degree}; the $\lambda\mapsto j$ map, where the Belyi structure is the
classical modular one; and the Klein quartic, where it comes from a large automorphism
group. All ramification data below are computed, not quoted.

\subsection{Fermat curves}\label{ssec:fermat}

\begin{example}[Fermat curves]\label{ex:fermat}
Let $F_{N}\subset\Pp^{2}$ be the Fermat curve $x^{N}+y^{N}=z^{N}$, smooth of genus
$g=\binom{N-1}{2}=\frac{(N-1)(N-2)}{2}$, and set
\begin{equation}\label{eq:fermat-belyi}
  t\colon F_{N}\longrightarrow\Pp^{1},\qquad
  t\bigl([x:y:z]\bigr)=\Bigl(\frac{x}{z}\Bigr)^{\!N} .
\end{equation}
Writing $u=x/z$, $v=y/z$, so that $u^{N}+v^{N}=1$, the map is $t=u^{N}=1-v^{N}$, of degree
$N^{2}$: it is the composition of the degree-$N$ projection $[x:y:z]\mapsto u$ with
$u\mapsto u^{N}$, or equivalently $F_{N}\to\Pp^{1}$ is the fibre product of two cyclic
covers. Its fibres are
\begin{align*}
  t^{-1}(0)&=\{u=0\}=\bigl\{[0:v:1]:v^{N}=1\bigr\}: &&N\ \text{points, each of index }N,\\
  t^{-1}(1)&=\{v=0\}=\bigl\{[u:0:1]:u^{N}=1\bigr\}: &&N\ \text{points, each of index }N,\\
  t^{-1}(\infty)&=\{z=0\}=\bigl\{[u:v:0]:u^{N}+v^{N}=0\bigr\}: &&N\ \text{points, each of index }N,
\end{align*}
so $n_{0}=n_{1}=n_{\infty}=N$ and $t$ is a Belyi map with passport
$\langle (N^{N}),(N^{N}),(N^{N})\rangle$. Riemann--Hurwitz \eqref{eq:belyi-RH} confirms the
genus:
\begin{equation*}
  2g-2=-2N^{2}+3N(N-1)=N^{2}-3N \quad\Longrightarrow\quad g=\frac{N^{2}-3N+2}{2}=\frac{(N-1)(N-2)}{2},
\end{equation*}
matching the degree--genus formula. Thus $\mathcal{B}(F_{N})\leq N^{2}$, while
Theorem~\ref{thm:degree-bound} gives $\mathcal{B}(F_{N})\geq 2g+1=N^{2}-3N+3$: the
coordinate Belyi map is within $3N-3$ of optimal, which for the Fermat curves is a
remarkably tight sandwich. The dessin of \eqref{eq:fermat-belyi} is the ``$N\times N$
grid'' on the genus-$g$ surface, with $N$ black and $N$ white vertices all of degree $N$
and $N$ faces.
\end{example}

\subsection{Cyclic covers and the extremal degrees}\label{ssec:cyclic}

Example~\ref{ex:superelliptic} already exhibited the extremal family. It is worth
recording the general ramification computation once, since it is the standard source of
explicit Belyi maps.

\begin{proposition}\label{prop:cyclic-ram}
Let $N\geq2$ and $a,b\geq1$ with $\gcd(a,b,N)=1$, and let $X_{N,a,b}$ be the smooth model
of $y^{N}=x^{a}(1-x)^{b}$ with $t=x$. Then $t$ is a Galois Belyi map with group $\Z/N$,
ramification indices \eqref{eq:superelliptic-ram} and genus
\begin{equation}\label{eq:cyclic-genus}
  g=1+\frac{1}{2}\Bigl(N-\gcd(a,N)-\gcd(b,N)-\gcd(a+b,N)\Bigr).
\end{equation}
\end{proposition}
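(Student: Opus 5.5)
The plan is to work with the function field $\C(X)=\C(x)[y]/(y^{N}-x^{a}(1-x)^{b})$ and read off everything from Kummer theory and a local valuation computation, finishing with \eqref{eq:belyi-RH}.

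\emph{Irreducibility and the Galois group.} I would first show that $f=x^{a}(1-x)^{b}$ is not a $p$-th power in $\C(x)$ for any prime $p\mid N$. The valuations of $f$ at $x=0$, $x=1$ and $x=\infty$ are $a$, $b$ and $-(a+b)$. Their gcd with $N$ equals $\gcd(a,b,N)=1$, so no such $p$ divides all three. Kummer theory (characteristic zero, with $\zeta_{N}\in\C$) then gives that $y^{N}-f$ is irreducible. Hence $X_{N,a,b}$ is connected, $\deg t=N$, and the extension is Galois with cyclic group generated by $y\mapsto\zeta_{N}y$.

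\emph{Ramification.} Over any point $x_{0}\notin\{0,1,\infty\}$, $f$ is a unit, so $y^{N}=f$ has $N$ distinct local solutions and $t$ is unramified there. Thus $t$ is a Belyi map. Over a point $s\in\{0,1,\infty\}$, let $v=v_{s}(f)\in\{a,b,-(a+b)\}$. In the Galois case all points above $s$ share a ramification index $e_{s}$. I would compute it by the standard local Kummer argument: locally $y^{N}=u\,\pi^{v}$ with $u$ a unit and $\pi$ a uniformizer. Then
\begin{equation*}
  N\,\ord_{P}(y)=e_{s}\,v ,
\end{equation*}
and the local extension $\C((\pi))(\sqrt[N]{\pi^{v}})$ has ramification index $N/\gcd(v,N)$. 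The latter holds because it is generated by $\pi^{1/N'}$ with $N'=N/\gcd(v,N)$, up to an unramified part of degree $\gcd(v,N)$. This gives \eqref{eq:superelliptic-ram}, and hence $n_{s}=\gcd(v,N)$.

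\emph{Genus.} Substituting into \eqref{eq:belyi-RH}, $2g-2=N-\gcd(a,N)-\gcd(b,N)-\gcd(a+b,N)$, which is \eqref{eq:cyclic-genus}.

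The main obstacle is the local computation at the three branch points. This is especially true at $\infty$, where one must replace $x$ by $1/x$ and absorb units. Care is also needed in justifying that a local unit has an $N$-th root in $\C[[\pi]]$, via Hensel or the binomial series. I would handle all three points uniformly by this valuation argument, rather than by explicit normalization.
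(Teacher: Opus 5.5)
Your proposal is correct and follows the paper's proof essentially step for step. Both use Kummer theory for the cyclic Galois structure, then the local splitting of $y^{N}=u\,\pi^{v}$ at each of $0,1,\infty$ into $\gcd(v,N)$ places of index $N/\gcd(v,N)$, then substitution of $n_{s}=\gcd(v,N)$ into \eqref{eq:belyi-RH}; your one addition, deducing irreducibility of $y^{N}-x^{a}(1-x)^{b}$ from $\gcd(a,b,N)=1$ via the valuations $a,b,-(a+b)$, justifies a point the paper only asserts when it calls the extension Kummer of degree $N$.
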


\begin{proof}
The extension $\C(x,y)/\C(x)$ is Kummer of degree $N$, cyclic with $y\mapsto\zeta_{N}y$;
it is unramified away from the zeros and poles of $x^{a}(1-x)^{b}$, i.e.\ away from
$0,1,\infty$. At $x=0$ the valuation of $x^{a}(1-x)^{b}$ is $a$, so the local extension is
$\C((x))\bigl[y\bigr]/(y^{N}-x^{a}\cdot\text{unit})$, which splits into $\gcd(a,N)$ places
each of ramification index $N/\gcd(a,N)$; similarly at $x=1$ with $b$, and at $x=\infty$
where the valuation is $-(a+b)$. Substituting $n_{s}=\gcd(\cdot,N)$ into
\eqref{eq:belyi-RH} gives \eqref{eq:cyclic-genus}.
\end{proof}

\begin{table}[h]
\centering\small
\begin{tabular}{@{}cccccccc@{}}
\toprule
$N$ & $a$ & $b$ & $\ram_{0}$ & $\ram_{1}$ & $\ram_{\infty}$ & $g$ & remark\\
\midrule
$3$&$1$&$1$&$3$&$3$&$3$&$1$& extremal, $d=2g+1$\\
$5$&$1$&$1$&$5$&$5$&$5$&$2$& extremal, $d=2g+1$\\
$7$&$1$&$1$&$7$&$7$&$7$&$3$& extremal, $d=2g+1$\\
$9$&$1$&$1$&$9$&$9$&$9$&$4$& extremal, $d=2g+1$\\
$4$&$1$&$2$&$4$&$2$&$4$&$1$& clean, extremal, $d=4g$\\
$8$&$1$&$2$&$8$&$4$&$8$&$3$& $d=8$, $4g=12$: not clean\\
$6$&$1$&$2$&$6$&$3$&$2$&$1$& $n_{\infty}=3$\\
\bottomrule
\end{tabular}
\caption{Cyclic Belyi covers $y^{N}=x^{a}(1-x)^{b}$ computed from
Proposition~\ref{prop:cyclic-ram}.}
\label{tab:cyclic}
\end{table}

\subsection{The modular example: $\lambda\mapsto j$}\label{ssec:lambda-j}

\begin{example}[The degree-six modular Belyi map]\label{ex:lambda-j}
The $\lambda$-line parametrizes elliptic curves with full level-$2$ structure via the
Legendre family $E_{\lambda}\colon y^{2}=x(x-1)(x-\lambda)$, and the forgetful map to the
$j$-line is
\begin{equation}\label{eq:j-lambda}
  j(\lambda)=256\,\frac{(\lambda^{2}-\lambda+1)^{3}}{\lambda^{2}(\lambda-1)^{2}} .
\end{equation}
Set $t=j/1728$. We claim that
$t\colon\Pp^{1}_{\lambda}\to\Pp^{1}_{t}$ is a \emph{clean} Belyi map of degree $6$, with
passport $\langle(3,3),(2,2,2),(2,2,2)\rangle$.

Indeed, from \eqref{eq:j-lambda},
\begin{equation}\label{eq:t-lambda}
  t=\frac{4\,(\lambda^{2}-\lambda+1)^{3}}{27\,\lambda^{2}(\lambda-1)^{2}},
  \qquad
  t-1=\frac{\bigl((\lambda+1)(2\lambda-1)(\lambda-2)\bigr)^{2}}
            {27\,\lambda^{2}(\lambda-1)^{2}} ,
\end{equation}
the second identity being equivalent to the classical relation
\begin{equation}\label{eq:disc-identity}
  4\,(\lambda^{2}-\lambda+1)^{3}-\bigl((\lambda+1)(2\lambda-1)(\lambda-2)\bigr)^{2}
  \;=\;27\,\lambda^{2}(\lambda-1)^{2},
\end{equation}
which one verifies by expanding both sides. Reading off \eqref{eq:t-lambda}:
\begin{itemize}[leftmargin=1.6em]
\item over $t=0$: the two roots of $\lambda^{2}-\lambda+1$, i.e.\ the primitive sixth
roots of unity, each with multiplicity $3$. So $n_{0}=2$, indices $(3,3)$.
\item over $t=1$: the three roots $\lambda=-1,\ \tfrac12,\ 2$, each with multiplicity
$2$. So $n_{1}=3$, indices $(2,2,2)$: the map is clean.
\item over $t=\infty$: the poles $\lambda=0,1$ of order $2$, and $\lambda=\infty$, where
the numerator has degree $6$ and the denominator degree $4$, so again order $2$. So
$n_{\infty}=3$, indices $(2,2,2)$.
\end{itemize}
The degree is $6$ in each case, and \eqref{eq:belyi-RH} gives
$2g-2=6-(2+3+3)=-2$, i.e.\ $g=0$, as it must. The three special fibres are exactly the
elliptic points of order $3$ ($j=0$), the elliptic points of order $2$ ($j=1728$) and the
cusps ($j=\infty$) of $\mathrm{SL}_{2}(\Z)$, and the six sheets are the six cosets of
$\Gamma(2)$ in $\mathrm{SL}_{2}(\Z)/\{\pm1\}$, permuted by
$\mathrm{SL}_{2}(\Z/2)\cong\Sym_{3}$; the values $\lambda=-1,\tfrac12,2$ and
$\lambda=0,1,\infty$ are the two orbits of size $3$ of this $\Sym_{3}$-action on the
$\lambda$-line, and the primitive sixth roots of unity form the orbit of size $2$.
Since $\Mon(t)$ is transitive of degree $6$ and $t$ is the quotient by
$\Sym_{3}$ composed with nothing else, $t$ is a Galois covering with group $\Sym_{3}$,
$|{\Mon}(t)|=6=\deg t$.
\end{example}

\begin{figure}[t]
\centering
\begin{tikzpicture}[scale=1.15,
  bl/.style={circle,fill=black,inner sep=2.2pt},
  wh/.style={circle,draw=black,fill=white,thick,inner sep=1.9pt},
  ed/.style={thick}]
\coordinate (B1) at (-1.7,0);
\coordinate (B2) at (1.7,0);
\draw[ed] (B1) to[out=65,in=115] (B2);
\draw[ed] (B1) -- (B2);
\draw[ed] (B1) to[out=-65,in=-115] (B2);
\node[wh] at (0,1.02) {};
\node[wh] at (0,0) {};
\node[wh] at (0,-1.02) {};
\node[bl] at (B1) {};
\node[bl] at (B2) {};
\node[left=4pt] at (B1) {\small $\zeta_{6}$};
\node[right=4pt] at (B2) {\small $\overline{\zeta_{6}}$};
\node[above=3pt] at (0,1.02) {\small $-1$};
\node[above=3pt] at (0,0.03) {\small $\tfrac12$};
\node[below=3pt] at (0,-1.02) {\small $2$};
\node[font=\footnotesize] at (0,1.62) {face $\leftrightarrow\lambda=0$};
\node[font=\footnotesize] at (0,0.45) {face $\leftrightarrow\lambda=1$};
\node[font=\footnotesize] at (0,-1.62) {face $\leftrightarrow\lambda=\infty$};
\end{tikzpicture}
\caption{The dessin of the modular Belyi map $t=j/1728$ of Example~\ref{ex:lambda-j},
drawn on the Riemann sphere: the ``theta graph'' with two black vertices of degree $3$
(the primitive sixth roots of unity, lying over $j=0$), three white vertices of degree
$2$ (the points $-1,\tfrac12,2$ over $j=1728$), and three faces (the cusps
$\lambda=0,1,\infty$). Euler: $5-6+3=2$, so $g=0$, in agreement with
\eqref{eq:belyi-RH}.}
\label{fig:theta-dessin}
\end{figure}

\subsection{The Klein quartic}\label{ssec:klein}

\begin{example}[Klein quartic; the Hurwitz bound as a passport]\label{ex:klein}
Let $K\subset\Pp^{2}$ be the Klein quartic $x^{3}y+y^{3}z+z^{3}x=0$, a smooth plane
quartic, hence of genus $g=\binom{4-1}{2}=3$. Its automorphism group is
$G=\mathrm{PSL}_{2}(\mathbb{F}_{7})$ of order $168=84(g-1)$, the maximum allowed by
Hurwitz's theorem. The quotient map
\begin{equation*}
  t\colon K\longrightarrow K/G\cong\Pp^{1}
\end{equation*}
is Galois of degree $168$, and is a Belyi map: by Riemann--Hurwitz for a Galois quotient
with signature $(0;p,q,r)$,
\begin{equation*}
  2g-2=|G|\Bigl(-2+\bigl(1-\tfrac1p\bigr)+\bigl(1-\tfrac1q\bigr)+\bigl(1-\tfrac1r\bigr)\Bigr),
\end{equation*}
and $(p,q,r)=(2,3,7)$ gives
$168\bigl(-2+\tfrac12+\tfrac23+\tfrac67\bigr)=168\cdot\tfrac{1}{42}=4=2\cdot3-2$,
confirming $g=3$. The passport is
$\langle(2^{84}),(3^{56}),(7^{24})\rangle$, with $n_{0}=84$, $n_{1}=56$,
$n_{\infty}=24$; note $\eqref{eq:belyi-RH}$: $168-(84+56+24)=4=2g-2$. Since all indices
over $1$ would have to be $2$ for cleanliness, the map as written is not clean; the
labelling can be permuted so that the index-$2$ points lie over $1$, and then it is.
The triangle group $\Delta(2,3,7)$ is the one of smallest positive area, which is
precisely why $84(g-1)$ is the Hurwitz bound---the Belyi/triangle-group picture makes the
bound a statement about hyperbolic area rather than about curves.
\end{example}
\section{The Galois action on dessins: an explicit orbit over
\texorpdfstring{$\Q(\sqrt[3]{2})$}{Q(cbrt 2)}}
\label{sec:galois-dessins}

Belyi's theorem gives $\GQ$ a set to act on. If $t\colon X\to\Pp^{1}$ is a Belyi map
defined over $\Qbar$ and $\sigma\in\GQ$, then $t^{\sigma}\colon X^{\sigma}\to\Pp^{1}$ is
again a Belyi map, of the same degree and with the same ramification indices---conjugation
cannot change the combinatorics of a fibre. Hence:

\begin{proposition}\label{prop:galois-passport}
$\GQ$ acts on the set of isomorphism classes of dessins, preserving the passport
\eqref{eq:passport}. The stabilizer of a dessin $\mathcal{D}$ is an open subgroup, and its
fixed field is the field of moduli $M(X,t)$; the orbit of $\mathcal{D}$ is finite of size
$[M(X,t):\Q]$.
\end{proposition}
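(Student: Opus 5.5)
We first make the action precise. For $\sigma\in\GQ$, extend it to $\tilde\sigma\in\Aut(\C)$ by Lemma~\ref{lem:aut-C}, and set $\sigma\cdot[(X,t)]=[(X^{\tilde\sigma},t^{\tilde\sigma})]$, using a model of $(X,t)$ over $\Qbar$. This model exists by the easy direction of Theorem~\ref{thm:belyi-intro} (through Theorem~\ref{thm:moduli-covering} and Corollary~\ref{cor:moduli-number-field}).

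\textbf{Well-definedness.} Two extensions of $\sigma$ differ by an element of $\Aut(\C/\Qbar)$. Such an element fixes the $\Qbar$-model, so the isomorphism class does not depend on the extension, nor on the chosen model, since an isomorphism of models over $\Qbar$ conjugates to an isomorphism of the conjugates. Compatibility with composition is immediate, so this is a group action.

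\textbf{Preservation of passport and three-point condition.} Because $\{0,1,\infty\}$ is $\Q$-rational, $t^{\sigma}$ is again branched only over $\{0,1,\infty\}$. Conjugation preserves degree. It also preserves each fibre's multiset of ramification indices, since $\ram_{P^{\sigma}}(t^{\sigma})=\ram_P(t)$: the valuation of $t-s$ at $P$ is transported by $\tilde\sigma$, with $s\in\{0,1\}$ fixed, and likewise for $1/t$ at $\infty$. Hence the passport \eqref{eq:passport} is preserved. Through Proposition~\ref{prop:dessin-dictionary} this is an action on dessins.

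\textbf{Stabilizer, fixed field and orbit size.} By construction, $\sigma$ stabilizes $\mathcal{D}$ exactly when some (equivalently every) extension $\tilde\sigma$ lies in $U(X,t)$. So the stabilizer is the image of $U(X,t)\cap\Aut(\C/\Q)$ in $\GQ$. Since $U(X,t)\supseteq\Aut(\C/\Qbar)$, restriction to $\Qbar$ identifies it with the image of $U(X,t)$.

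We then need to show the stabilizer is open with fixed field $M(X,t)$. Use Step~1 of the proof of Theorem~\ref{thm:rigid-descent}: that step only needs a finite extension $L\supseteq M(X,t)$ over which $(X,t)$ is defined, supplied by Theorem~\ref{thm:moduli-covering}, and not the rigidity hypothesis. It gives $U(X,t)=\Aut(\C/M(X,t))$. By Corollary~\ref{cor:moduli-number-field}, $M(X,t)$ is a number field, so the stabilizer equals $\Gal(\Qbar/M(X,t))$. This is open of index $[M(X,t):\Q]$ and has fixed field $M(X,t)$.

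Orbit–stabilizer then gives orbit size $[\GQ:\Gal(\Qbar/M)]=[M(X,t):\Q]$. Finiteness is also visible independently from Proposition~\ref{prop:finiteness}.

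\textbf{Main obstacle.} The delicate step is the identification $U(X,t)=\Aut(\C/M(X,t))$, which requires the closedness criterion of Lemma~\ref{lem:index}. Once closedness is granted, the rest is Galois correspondence for the profinite group $\GQ$, where closed finite-index subgroups correspond bijectively to number fields.
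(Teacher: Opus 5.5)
Your argument is correct. Its skeleton matches the paper's: the stabilizer is $U(X,t)\cap\GQ$ by the definition of $U(X,t)$, and orbit--stabilizer then gives the size. The key input is different, though.

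The paper takes finiteness of orbits, hence finite index of stabilizers, from the combinatorial Proposition~\ref{prop:finiteness}, and reads off the fixed field from Lemma~\ref{lem:aut-C}. It never argues that the stabilizer is \emph{closed}. Closedness is exactly what the openness claim and the equality $[\GQ:\operatorname{Stab}]=[M(X,t):\Q]$ require. A finite-index subgroup of $\GQ$ with fixed field $M$ has index $[M:\Q]$ times its index in its closure $\Gal(\Qbar/M)$, so finite index alone does not suffice.

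You obtain closedness directly. You observe that Step~1 of the proof of Theorem~\ref{thm:rigid-descent} uses only Theorem~\ref{thm:moduli-covering} and the criterion of Lemma~\ref{lem:index}, not rigidity. Hence $U(X,t)=\Aut(\C/M(X,t))$, and the stabilizer is literally $\Gal(\Qbar/M(X,t))$. Openness, the fixed field and the orbit size then all follow from infinite Galois theory, and Proposition~\ref{prop:finiteness} becomes a consistency check rather than an input.

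You also construct the action explicitly (extension to $\Aut(\C)$, a $\Qbar$-model, independence of choices), which the paper takes for granted. Within the paper's framework some such descent input is unavoidable. Even the quick repair of the paper's route relies on Theorem~\ref{thm:moduli-covering}: that repair notes that a model of $(X,t)$ over a number field $L$ gives $\Gal(\Qbar/L)\subseteq\operatorname{Stab}$, hence openness.
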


\begin{proof}
Invariance of the passport is the remark above. By Proposition~\ref{prop:finiteness} the
set of dessins of given degree is finite, so all orbits are finite and stabilizers have
finite index; the stabilizer is $U(X,t)\cap\GQ$ by
Definition~\ref{def:moduli-field}, whose fixed field is $M(X,t)$ by
Lemma~\ref{lem:aut-C}, and the orbit--stabilizer theorem gives the size.
\end{proof}

\begin{corollary}[Passport bound on the field of moduli]\label{cor:passport-bound}
Let $t$ be a Belyi map with passport $\Pi$ and let $N(\Pi)$ be the number of isomorphism
classes of dessins with passport $\Pi$. Then
\begin{equation*}
  \bigl[M(X,t):\Q\bigr]\;\leq\;N(\Pi).
\end{equation*}
In particular, if the passport contains a single dessin, then $M(X,t)=\Q$; this
uniqueness is analogous to---though not identical with---the rigidity phenomenon of
inverse Galois theory, where the relevant condition is uniqueness of a generating tuple in
prescribed rational conjugacy classes of the monodromy group up to conjugacy
\cite{MalleMatzat1999,Serre1992}.
\end{corollary}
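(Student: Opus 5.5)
The plan is to read the bound off the orbit--stabilizer statement of Proposition~\ref{prop:galois-passport}. That proposition says $\GQ$ acts on isomorphism classes of dessins and preserves the passport. So the $\GQ$-orbit of the dessin $\mathcal{D}(t)$ lies inside the set of dessins with passport $\Pi$, which by definition has $N(\Pi)$ elements. The same proposition gives the size of this orbit as $[M(X,t):\Q]$. Combining the two gives $[M(X,t):\Q]\leq N(\Pi)$ at once.

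For the special case, note that if $N(\Pi)=1$ then the degree $[M(X,t):\Q]$ equals $1$, so $M(X,t)=\Q$. One subtlety needs a sentence. Definition~\ref{def:moduli-field} defines the field of moduli as $\C^{U(X,t)}$ inside $\C$, while Proposition~\ref{prop:galois-passport} works with $\GQ$. Corollary~\ref{cor:moduli-number-field} shows that $M(X,t)$ is a number field. The stabilizer in $\GQ$ is the restriction of $U(X,t)$ to $\Qbar$: this uses the extension property of Lemma~\ref{lem:aut-C}, together with the fact that $(X,t)$ is defined over $\Qbar$, by the easy direction of Theorem~\ref{thm:belyi-intro}. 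So the two descriptions of $M(X,t)$ agree, and the degree in the corollary is the orbit size.

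The closing comparison with inverse Galois rigidity is interpretive, so it needs no proof beyond a remark. Uniqueness of a dessin in its passport means uniqueness of the triple up to conjugation in $\Sym_{d}$. Rigidity in the sense of \cite{MalleMatzat1999} instead concerns generating tuples in prescribed classes of a fixed group $G$, up to $\Aut(G)$ or inner conjugation; the two conditions are related but neither is literally the other. The main obstacle, such as it is, is the identification of the field of moduli with the fixed field of the $\GQ$-stabilizer. I would take that identification from the proof of Proposition~\ref{prop:galois-passport} rather than reprove it.
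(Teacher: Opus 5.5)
Your proposal is correct and is essentially the paper's own argument: by Proposition~\ref{prop:galois-passport} the $\GQ$-orbit of $\mathcal{D}(t)$ has size $[M(X,t):\Q]$ and lies inside the set of $N(\Pi)$ dessins with passport $\Pi$, which gives the bound. Your extra sentence reconciling $M(X,t)=\C^{U(X,t)}$ with the fixed field of the $\GQ$-stabilizer is something the paper absorbs into the proof of that proposition, so it is a slightly more careful gloss rather than a different route.
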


\begin{proof}
By Proposition~\ref{prop:galois-passport} the degree of the field of moduli is the size of
the $\GQ$-orbit, which is contained in the set of dessins with passport $\Pi$.
\end{proof}

Corollary~\ref{cor:passport-bound} explains Corollary~\ref{cor:pmn-rigid} after the fact:
the passport \eqref{eq:pmn-passport} of $P_{m,n}$ has $N(\Pi)=1$, so $P_{m,n}$ must be
definable over $\Q$---as of course it visibly is. The interest of the bound is in the
opposite direction, when $N(\Pi)>1$ and one wants to know how large the field of moduli
can be; the computations below realize the bound exactly, with $N(\Pi)=3$ and a cubic
field, and with $N(\Pi)=2$ and a quadratic field.

The action is faithful, and remains so on plane trees \cite{Schneps1994,LandoZvonkin2004};
computing orbits is therefore a way of touching $\GQ$ with bare hands. Passports are the
first invariant, but they do not separate orbits, and the interesting phenomena begin
where a single passport contains several dessins. We now carry out such a computation in
full, in the smallest degree where the answer is a cubic field.

\subsection{Setting up}\label{ssec:orbit-setup}

By \S\ref{ssec:dessins}, plane trees with $d$ edges correspond to polynomial Belyi maps of
degree $d$, uniquely up to the affine substitutions $z\mapsto\alpha z+\beta$. Fix the
passport
\begin{equation}\label{eq:passport-321}
  \Pi=\bigl\langle(3,2,1),\ (2,2,1,1),\ (6)\bigr\rangle ,
\end{equation}
so $d=6$: three black vertices of degrees $3,2,1$, four white vertices of degrees
$2,2,1,1$, and one face. There are $3+4=7$ vertices and $6$ edges, consistent with a tree,
and \eqref{eq:belyi-RH} gives $2g-2=6-(3+4+1)=-2$, so $g=0$.

Normalize the black vertex of degree $3$ to $z=0$ and the one of degree $2$ to $z=1$; the
remaining black vertex is at an unknown $z=a$. Then
\begin{equation}\label{eq:p-normalized}
  p(z)=c\,z^{3}(z-1)^{2}(z-a),
\end{equation}
and the white vertices are the roots of $p-1$, with multiplicities $2,2,1,1$. Since
$p'$ vanishes to order $\alpha-1$ at a black vertex of degree $\alpha$ and to order
$\beta-1$ at a white vertex of degree $\beta$, we get
\begin{equation}\label{eq:pprime}
  p'(z)=6c\,z^{2}(z-1)\bigl(z^{2}-\tfrac{4+5a}{6}z+\tfrac{a}{2}\bigr)
       =c\,z^{2}(z-1)\bigl(6z^{2}-(4+5a)z+3a\bigr),
\end{equation}
as one checks by differentiating \eqref{eq:p-normalized}. Thus the two white vertices of
degree $2$ are the roots $u,v$ of the quadratic factor:
\begin{equation}\label{eq:uv}
  u+v=\frac{4+5a}{6},\qquad uv=\frac{a}{2},
\end{equation}
and the conditions defining a Belyi map are exactly
\begin{equation}\label{eq:belyi-conditions}
  p(u)=p(v)=1 .
\end{equation}

\begin{lemma}\label{lem:orbit-equations}
Conditions \eqref{eq:belyi-conditions} are equivalent to the system
\begin{align}
  \bigl(3125a^{6}-7500a^{5}+4800a^{4}-112a^{3}-96a^{2}+1536a-1024\bigr)\,c
  &= -93312, \label{eq:E1}\\[2pt]
  a^{4}(a-1)^{3}\,c^{2}&=432 . \label{eq:E2}
\end{align}
Moreover $c$ is determined by $a$ through the linear equation \eqref{eq:E1}; in particular
every solution satisfies $c\in\Q(a)$, so the corresponding Belyi map is defined over
$\Q(a)$.
\end{lemma}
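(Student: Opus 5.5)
The plan is to avoid solving for $u$ and $v$ individually and to work only with symmetric functions of the two roots of the quadratic factor in \eqref{eq:pprime}. The key observation is elementary. For numbers $x,y$,
\begin{equation*}
  x=y=1\iff x+y=2\ \text{and}\ xy=1,
\end{equation*}
since the second pair of conditions says that $x,y$ are the two roots, with multiplicity, of $(X-1)^{2}$. Applied to $x=p(u)$ and $y=p(v)$, this shows that \eqref{eq:belyi-conditions} is equivalent to
\begin{equation*}
  p(u)\,p(v)=1 \quad\text{and}\quad p(u)+p(v)=2 .
\end{equation*}
Both sides of these are symmetric in $u,v$, so by \eqref{eq:uv} they are polynomial expressions in $a$ and $c$. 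This reformulation has a useful side effect: we never need to assume $u\neq v$, and no case distinction arises.

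\emph{The product condition gives \eqref{eq:E2}.} Write
\begin{equation*}
  p(u)p(v)=c^{2}(uv)^{3}\bigl((u-1)(v-1)\bigr)^{2}(u-a)(v-a).
\end{equation*}
Using \eqref{eq:uv} term by term, $uv=a/2$. Next, $(u-1)(v-1)=uv-(u+v)+1=(1-a)/3$. Finally, $(u-a)(v-a)=a^{2}-a(u+v)+uv=(a^{2}-a)/6$. Multiplying these gives
\begin{equation*}
  p(u)p(v)=\frac{c^{2}a^{4}(a-1)^{3}}{8\cdot 9\cdot 6}=\frac{c^{2}a^{4}(a-1)^{3}}{432},
\end{equation*}
so the product condition is exactly \eqref{eq:E2}.

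\emph{The sum condition gives \eqref{eq:E1}.} Since $p$ is $c$ times a monic sextic in $z$ with coefficients in $\Z[a]$, the sum $p(u)+p(v)$ equals $c\,S(a)$ for a polynomial $S$. To compute $S$, I would divide $z^{3}(z-1)^{2}(z-a)$ by $q(z)=6z^{2}-(4+5a)z+3a$, giving remainder $\alpha(a)z+\beta(a)$. Then
\begin{equation*}
  p(u)+p(v)=c\bigl(\alpha(u+v)+2\beta\bigr).
\end{equation*}
Alternatively one can expand via Newton's identities for the power sums $u^{k}+v^{k}$, $k\leq6$. Setting the result equal to $2$ and clearing the denominators, which are powers of $6$, should produce \eqref{eq:E1}; the constant $-93312=-2\cdot 6^{6}$ is consistent with this bookkeeping.

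This polynomial division is the only laborious step and the main place where an error could enter. I would check the outcome by specializing to one or two values of $a$, and by checking that $a=0$ and $a=1$ behave as the degenerate cases. The final assertion is then immediate. Equation \eqref{eq:E1} is linear in $c$, and its coefficient cannot vanish at a solution, since otherwise the left side would be $0\neq-93312$. Hence $c\in\Q(a)$, so $p$ has coefficients in $\Q(a)$ and the Belyi map is defined over $\Q(a)$.
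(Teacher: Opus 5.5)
Your proposal is correct and is essentially the paper's proof: replace $p(u)=p(v)=1$ by the symmetric pair $p(u)+p(v)=2$, $p(u)p(v)=1$, rewrite both via \eqref{eq:uv}, clear denominators, and use linearity of \eqref{eq:E1} in $c$ to get $c\in\Q(a)$. Two points go beyond the paper's bare assertions: you evaluate the product explicitly, and you observe that the coefficient of $c$ cannot vanish at a solution, since \eqref{eq:E1} would then read $0=-93312$. The sum computation you left as ``should produce'' also checks out: $46656\,\bigl(p(u)+p(v)\bigr)/c$ is exactly the negative of the sextic in \eqref{eq:E1}.
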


\begin{proof}
The pair of conditions \eqref{eq:belyi-conditions} is equivalent to
$p(u)+p(v)=2$ and $p(u)p(v)=1$. Both left-hand sides are symmetric polynomials in $u,v$,
hence polynomials in $u+v$ and $uv$; substituting \eqref{eq:uv} and clearing denominators
gives \eqref{eq:E1} and \eqref{eq:E2} respectively. Equation \eqref{eq:E1} is linear in
$c$ with a nonzero coefficient for the solutions in question, whence $c\in\Q(a)$.
\end{proof}

The elimination must be certified: clearing denominators and taking resultants can create
extraneous solutions, and the normalization \eqref{eq:p-normalized} must be shown to
biject solutions with isomorphism classes. The following lemma, whose verifications are
documented line by line in Appendix~\ref{app:computations}, does both.

\begin{lemma}[No extraneous solutions; bijection with dessins]\label{lem:orbit-bijection}
Let $R(a)=-432\,(25a^{2}-32a+16)^{3}(25a^{3}-12a^{2}-24a-16)^{2}$ be the resultant
\eqref{eq:resultant}.
\begin{enumerate}[label=\textup{(\roman*)},leftmargin=2.4em]
\item The coefficient of $c$ in \eqref{eq:E1} is coprime to both irreducible factors of
$R$: it vanishes at no root of either. Consequently every root $a$ of $R$ determines a
unique $c=c(a)\in\Q(a)^{\times}$, and the pair $(a,c(a))$ satisfies both \eqref{eq:E1}
and \eqref{eq:E2}; no solutions are lost or gained in the elimination.
\item Neither factor of $R$ vanishes at $a\in\{0,1\}$, so every solution
$p=c\,z^{3}(z-1)^{2}(z-a)$ is a genuine degree-$6$ polynomial with three distinct black
vertices of degrees $3,2,1$, and its critical values are exactly $\{0,1\}$: it is a Belyi
polynomial.
\item The discriminant $(4+5a)^{2}-72a$ of the quadratic factor of $p'$ in
\eqref{eq:pprime} vanishes identically modulo $25a^{2}-32a+16$ and is coprime to
$25a^{3}-12a^{2}-24a-16$. Hence the roots of the quadratic factor of $R$ give white
degrees $(3,1,1,1)$ and those of the cubic factor give $(2,2,1,1)$: the two passports
separate exactly along the factorization of $R$.
\item The normalization is rigid: an affine map fixing $0$ and $1$ is the identity, and
the black vertices of degrees $3,2,1$ are distinguishable, so distinct solutions
$(a,c)$ give non-isomorphic Belyi polynomials. Together with
Proposition~\ref{prop:dessin-dictionary} and the matching combinatorial counts
($3$ and $2$), the solutions of each factor biject with the isomorphism classes of trees
of the corresponding passport.
\end{enumerate}
\end{lemma}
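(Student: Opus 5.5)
The plan is to verify each of (i)--(iv) by exact polynomial arithmetic over $\Q$, reducing every "vanishes at no root" claim to a resultant or gcd computation.

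Write $A(a)=3125a^{6}-7500a^{5}+4800a^{4}-112a^{3}-96a^{2}+1536a-1024$ for the coefficient of $c$ in \eqref{eq:E1}. Let $F_{2}=25a^{2}-32a+16$ and $F_{3}=25a^{3}-12a^{2}-24a-16$.

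\emph{Part (i).} First confirm that $F_{2}$ and $F_{3}$ are irreducible over $\Q$. For $F_{3}$ the rational root test suffices. For $F_{2}$ the discriminant is $1024-1600<0$.

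Next compute the remainders of $A$ modulo $F_{2}$ and modulo $F_{3}$. Equivalently, compute $\operatorname{Res}(A,F_{2})$ and $\operatorname{Res}(A,F_{3})$ and check that both are nonzero integers. By irreducibility, this shows $A(a)\neq0$ at every root, so $c(a)=-93312/A(a)\in\Q(a)^{\times}$ is well defined and unique.

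Then substitute this $c$ into \eqref{eq:E2}. That reduces it to the polynomial identity
\[
a^{4}(a-1)^{3}\cdot 93312^{2}-432\,A(a)^{2}\equiv 0 \pmod{F_{i}}.
\]
This is exactly what $R$ being the resultant in $c$ of \eqref{eq:E1}--\eqref{eq:E2} predicts. I would check it directly by reducing modulo $F_{2}$ and $F_{3}$.

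Conversely, any solution $(a,c)$ of the system makes $R(a)=0$, because $R$ is the eliminant. So no solutions are lost either.

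\emph{Part (ii).} Evaluate the factors at the endpoints: $F_{2}(0)=16$, $F_{2}(1)=9$, $F_{3}(0)=-16$ and $F_{3}(1)=-27$. All are nonzero, so $a\notin\{0,1\}$ and the three black vertices are distinct.

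The critical points of $p$ are $0$, $1$ and the roots $u,v$ of the quadratic factor in \eqref{eq:pprime}. Their values are $0,0$ and $p(u)=p(v)=1$, the last by Lemma~\ref{lem:orbit-equations}. Hence $p$ is a Belyi polynomial.

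\emph{Part (iii).} Compute the discriminant $\Delta(a)=(4+5a)^{2}-72a=25a^{2}-32a+16$. This is literally $F_{2}$, so it vanishes identically modulo $F_{2}$.

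It is coprime to $F_{3}$ because $F_{2}$ and $F_{3}$ are distinct irreducibles; the nonvanishing of $\operatorname{Res}(F_{2},F_{3})$ gives the same conclusion. There is one more degeneracy to exclude: a root of the quadratic factor landing on $0$ or $1$. Its value at $0$ is $3a\neq0$, and its value at $1$ is $2-2a\neq0$.

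On $F_{2}$ we then get $u=v$, a white vertex of degree $3$, giving white degrees $(3,1,1,1)$. On $F_{3}$ we get two distinct double roots $u\neq v$, giving white degrees $(2,2,1,1)$.

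\emph{Part (iv).} A tree in either passport has black vertices of distinct degrees $3,2,1$. Any isomorphism of Belyi polynomials is an affine map $z\mapsto\alpha z+\beta$, and it must send each black vertex to the one of the same degree. In particular it fixes $0$ and $1$, so it is the identity. Therefore distinct pairs $(a,c)$ give non-isomorphic dessins.

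Conversely, every tree in the passport is realized by some polynomial by \S\ref{ssec:dessins}. After the normalization, that polynomial has the form \eqref{eq:p-normalized}, so every tree arises from some solution. The map from solutions to trees is therefore injective and surjective. The counts match: $\deg F_{3}=3$ and $\deg F_{2}=2$ agree with the combinatorial counts $3$ and $2$.

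\emph{Main obstacle.} The hard step is (i): certifying that $A$ is a unit modulo each factor and that the reduced \eqref{eq:E2} holds. The rest is structural. I would handle (i) with exact division in $\Q[a]/(F_{i})$ and record the resulting nonzero resultants, as Appendix~\ref{app:computations} documents.
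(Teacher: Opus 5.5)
Your proposal is correct and follows the paper's proof essentially step by step: it uses gcd/resultant checks of the $c$-coefficient against $25a^{2}-32a+16$ and $25a^{3}-12a^{2}-24a-16$ for (i), evaluation at $a=0,1$ for (ii), comparison of the discriminant with the factors of $R$ for (iii), and rigidity of the normalization plus realizability of every tree for (iv). Your observation that $(4+5a)^{2}-72a$ is literally $25a^{2}-32a+16$, and your explicit treatment of the ``no solutions lost'' direction and of $u,v\notin\{0,1\}$, sharpen the same argument slightly rather than take a different route.
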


\begin{proof}
(i) Compute $\gcd$ of the degree-$6$ coefficient polynomial of $c$ in \eqref{eq:E1} with
each factor of $R$: both gcds are $1$ over $\Q$ (Appendix~\ref{app:computations}). Since
\eqref{eq:E1} is linear in $c$, each root $a$ of $R$ gives exactly one $c(a)$, nonzero
because the constant term $93312\neq0$; substituting into \eqref{eq:E2} and using that
$R$ is precisely the resultant shows the pair satisfies both equations.
(ii) Evaluate: the cubic takes values $-16$ and $-27$ at $a=0,1$, the quadratic $16$ and
$9$; all nonzero. That the critical values are $\{0,1\}$ is then the defining property of
the system \eqref{eq:belyi-conditions}, verified symbolically at each root.
(iii) Polynomial division: the discriminant reduces to $0$ modulo the quadratic factor and
has gcd $1$ with the cubic factor.
(iv) An affine substitution preserving the normalization must fix the black vertices of
degrees $3$ and $2$, i.e.\ fix $0$ and $1$, hence be the identity; so the assignment
$(a,c)\mapsto p$ is injective on isomorphism classes. Surjectivity onto each passport
holds because every tree of the passport, normalized this way, satisfies
\eqref{eq:E1}--\eqref{eq:E2}, hence appears among the roots of $R$; the counts match
($3+2$ solutions, $3+2$ trees), so the correspondence is a bijection passport by
passport.
\end{proof}

\subsection{The cubic orbit}\label{ssec:cubic-orbit}

\begin{theorem}[{A $\GQ$-orbit of three plane trees over $\Q(\sqrt[3]{2})$}]
\label{thm:cubic-orbit}
Eliminating $c$ from \eqref{eq:E1}--\eqref{eq:E2} gives
\begin{equation}\label{eq:resultant}
  \operatorname{Res}_{c}\bigl(\eqref{eq:E1},\eqref{eq:E2}\bigr)
  \;=\;-432\,\bigl(25a^{2}-32a+16\bigr)^{3}\bigl(25a^{3}-12a^{2}-24a-16\bigr)^{2}.
\end{equation}
The passport \eqref{eq:passport-321} corresponds exactly to the cubic factor: it contains
exactly three plane trees $T_{1},T_{2},T_{3}$, forming a single $\GQ$-orbit, indexed by
the three roots $a_{1},a_{2},a_{3}$ of the irreducible cubic
\begin{equation}\label{eq:cubic-field}
  25a^{3}-12a^{2}-24a-16=0 .
\end{equation}
The field of moduli of the tree attached to $a_{i}$ is the embedded cubic field
$\Q(a_{i})\subset\C$, which is also a field of definition for it. The three fields
$\Q(a_{1}),\Q(a_{2}),\Q(a_{3})$ are distinct subfields of $\C$---the conjugate embeddings
of one abstract \emph{non-Galois} cubic field, abstractly isomorphic to
$\Q(\sqrt[3]{2})$---and they are permuted by $\GQ$ along with the trees:
$M(T^{\sigma})=\sigma\bigl(M(T)\bigr)$, since
$\operatorname{Stab}(T^{\sigma})=\sigma\operatorname{Stab}(T)\sigma^{-1}$. Explicitly,
the real root is
\begin{equation}\label{eq:a-explicit}
  a=\frac{4+18\sqrt[3]{2}+6\sqrt[3]{4}}{25}
  \;\approx\;1.448119408,
\end{equation}
and the corresponding Belyi polynomial is \eqref{eq:p-normalized} with
\begin{equation}\label{eq:c-explicit}
  c=\frac{-78125}{1968\sqrt[3]{2}+681\sqrt[3]{4}-1196}\;\approx\;-33.040186803 .
\end{equation}
The three trees are drawn in Figure~\ref{fig:three-trees}.
\end{theorem}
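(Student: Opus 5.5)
The proof has three parts: an algebraic computation for the elimination, combinatorics to count the trees, and the descent results of Section~\ref{sec:descent} for fields of moduli and definition.

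\emph{Step 1: the resultant and the passport split.} Eliminate $c$ from \eqref{eq:E1}--\eqref{eq:E2}. Since \eqref{eq:E1} is linear in $c$, write $c=-93312/A(a)$, where $A$ is the sextic coefficient. Substituting into \eqref{eq:E2} gives $a^{4}(a-1)^{3}\cdot 93312^{2}=432\,A(a)^{2}$. Up to the constant, this is the resultant \eqref{eq:resultant}, and I would verify the factorization by expansion. Lemma~\ref{lem:orbit-bijection}(iii) then assigns the cubic factor to the passport \eqref{eq:passport-321} and the quadratic factor to $\langle(3,2,1),(3,1,1,1),(6)\rangle$. Parts (i), (ii) and (iv) of that lemma make the roots of the cubic biject with the trees of the passport. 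Independently, I would count plane trees with this passport by hand: the degree-$3$ and degree-$2$ black vertices must be joined through a degree-$2$ white vertex or attached via the leaf structure, and the cyclic orders give exactly three embeddings. This confirms $N(\Pi)=3$.

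\emph{Step 2: irreducibility and the field.} Irreducibility of $25a^{3}-12a^{2}-24a-16$ follows from the rational root test on $25a^{3}-\dots$. Equivalently, substitute $a=(4+6b)/25$, or directly check that \eqref{eq:a-explicit} satisfies the cubic, so $\Q(a)=\Q(\sqrt[3]{2})$ abstractly. To do this, set $\theta=\sqrt[3]{2}$, write $25a-4=18\theta+6\theta^{2}$, and reduce the cube using $\theta^{3}=2$. The discriminant of the cubic is $-108$ times a square, so the splitting field has degree $6$ and the field is non-Galois. Hence the three embedded fields $\Q(a_{i})$ are distinct: one is real and two are complex conjugates. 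The value \eqref{eq:c-explicit} comes from $c=-93312/A(a)$ after rationalizing.

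\emph{Step 3: orbit, field of moduli, field of definition.} For $\sigma\in\GQ$, applying $\sigma$ to the coefficients of $p_{a_i}$ gives $p_{\sigma(a_i)}$, since $c\in\Q(a)$ is a rational function of $a$. So the action on trees is the action on roots of the irreducible cubic, which is transitive, and the three trees form one orbit. By Proposition~\ref{prop:galois-passport}, the stabilizer of $T_i$ is $\{\sigma:\sigma(a_i)=a_i\}$, because the $a_i$ are distinct and the normalization is rigid (Lemma~\ref{lem:orbit-bijection}(iv)). Its fixed field is $\Q(a_i)$, so $M(T_i)=\Q(a_i)$. The explicit model $p_{a_i}\in\Q(a_i)[z]$ shows $\Q(a_i)$ is a field of definition, with no descent needed. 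The identity $\operatorname{Stab}(T^{\sigma})=\sigma\operatorname{Stab}(T)\sigma^{-1}$ is formal and gives $M(T^{\sigma})=\sigma(M(T))$.

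The main obstacle is Step 1. The resultant computation and the certification that no extraneous roots arise are heavy symbolic manipulation, which I would delegate to the documented checks of Lemma~\ref{lem:orbit-bijection}. The hand count of the three trees is the independent cross-check I would rely on.
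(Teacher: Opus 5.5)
Your proposal is correct and follows essentially the paper's proof. The shared steps are: direct elimination of $c$ for the resultant, Lemma~\ref{lem:orbit-bijection} for the passport split and the bijection with roots, the rational root test, a check that $a_{0}$ satisfies the cubic, a non-square discriminant for non-normality (indeed $-5038848=-108\cdot 216^{2}$, as you say), and Proposition~\ref{prop:galois-passport} for the field of moduli.

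There are two differences, both cosmetic.

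First, you identify the stabilizer directly as $\Gal(\Qbar/\Q(a_{i}))$ via $p_{a}^{\sigma}=p_{\sigma(a)}$. The paper instead uses the inclusion $M(T_{i})\subseteq\Q(a_{i})$ together with the orbit-size count.

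Second, you count the trees by hand, whereas the paper enumerates permutation triples. Your sketch of that count is garbled and should be stated precisely: the two degree-$2$ white vertices join the black vertices in a path. Its middle vertex has degree $3$, giving two planar embeddings, or degree $2$, giving one.
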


\begin{proof}
The resultant \eqref{eq:resultant} is a direct computation from \eqref{eq:E1} and
\eqref{eq:E2}, and its factorization over $\Q$ is as displayed. We must match the two
irreducible factors with the two possible passports.

The quadratic factor $25a^{2}-32a+16$ has roots $a=(16\pm12i)/25$. For these, the
discriminant of the quadratic in \eqref{eq:pprime} is
$(4+5a)^{2}-72a$, which at $a=(16\pm12i)/25$ evaluates to
$\bigl(\tfrac{36\pm12i}{5}\bigr)^{2}-\tfrac{72(16\pm12i)}{25}=0$: the roots $u,v$
\emph{coincide}. Then $p-1$ has a triple root at $u=v$, so the white degrees are
$(3,1,1,1)$, not $(2,2,1,1)$: this factor belongs to the neighbouring passport
$\langle(3,2,1),(3,1,1,1),(6)\rangle$ of Theorem~\ref{thm:quadratic-orbit}, not to
\eqref{eq:passport-321}.

The cubic factor therefore carries the passport \eqref{eq:passport-321}. It is
irreducible over $\Q$ (no rational root, by the rational root test), so its three roots
form a single $\GQ$-orbit. One checks that
$a_{0}=(4+18\sqrt[3]{2}+6\sqrt[3]{4})/25$ satisfies the cubic, so
$\Q(a_{0})\subseteq\Q(\sqrt[3]{2})$, with equality by comparing degrees. Solving
\eqref{eq:E1} for $c$ at $a=a_{0}$ gives \eqref{eq:c-explicit}, and a direct evaluation
confirms that the resulting $p$ of \eqref{eq:p-normalized} has critical values exactly
$\{0,1\}$, i.e.\ is a Belyi polynomial; the analogous verification at the other two roots
is part of Lemma~\ref{lem:orbit-bijection}.

It remains to see that the count is right---that the passport contains exactly three
dessins, no more. This is a purely combinatorial check, independent of the algebra: by
Theorem~\ref{thm:classification} the dessins with passport \eqref{eq:passport-321} are the
transitive triples $\sigma_{0}\sigma_{1}\sigma_{\infty}=1$ in $\Sym_{6}$ with cycle types
$(3,2,1)$, $(2,2,1,1)$, $(6)$, taken up to simultaneous conjugation. Enumerating these
gives exactly three classes, represented by
\begin{align*}
  \sigma_{0}&=(1\,3\,6)(2\,4), &\sigma_{1}&=(1\,4)(3\,5),\\
  \sigma_{0}&=(1\,3\,6)(2\,4), &\sigma_{1}&=(2\,3)(4\,5),\\
  \sigma_{0}&=(1\,3\,6)(2\,4), &\sigma_{1}&=(1\,2)(5\,6),
\end{align*}
(labels $1,\dots,6$ for the edges). Three trees, three roots of an irreducible cubic, a
single orbit: the normalization \eqref{eq:p-normalized} sets up a bijection between the
trees of the passport and the roots of the cubic (Lemma~\ref{lem:orbit-bijection} below
makes this precise), and $\GQ$ acts compatibly, permuting the roots transitively---so the
orbit is single, and $\GQ$ moves the tree attached to $a_{i}$ to the one attached to
$\sigma(a_{i})$. Finally, fix $i$ and write $T_{i}$ for the tree of $a_{i}$. By
Lemma~\ref{lem:orbit-equations}, $T_{i}$ is defined over $\Q(a_{i})$, so
$M(T_{i})\subseteq\Q(a_{i})$; by Proposition~\ref{prop:galois-passport},
$[M(T_{i}):\Q]$ equals the orbit size $3=[\Q(a_{i}):\Q]$; hence
$M(T_{i})=\Q(a_{i})$, an embedded cubic subfield of $\C$ which is simultaneously the
field of moduli and a field of definition. Applying $\sigma\in\GQ$ conjugates the
stabilizer, so $M(T_{i}^{\sigma})=\sigma(\Q(a_{i}))=\Q(\sigma(a_{i}))$: the three trees
carry the three conjugate embeddings, one field each, not one common field. The splitting
field of \eqref{eq:cubic-field} has Galois group $\Sym_{3}$ (discriminant $-5038848$, not
a square), so the abstract field is non-Galois; and
$a_{0}=(4+18\sqrt[3]{2}+6\sqrt[3]{4})/25$ shows the real embedding equals
$\Q(\sqrt[3]{2})$.
\end{proof}

\begin{figure}[t]
\centering
\begin{tikzpicture}[scale=0.92,
  bl/.style={circle,fill=black,inner sep=2.2pt},
  wh/.style={circle,draw=black,fill=white,thick,inner sep=1.8pt},
  ed/.style={thick},
  lbl/.style={font=\scriptsize}]
\begin{scope}[xshift=0cm]
  \coordinate (B3) at (0,0);
  \coordinate (W1) at (1.15,0);
  \coordinate (B2) at (2.3,0);
  \coordinate (L1) at (3.45,0);
  \coordinate (W2) at (-0.82,0.82);
  \coordinate (B1) at (-1.64,1.64);
  \coordinate (L2) at (-0.82,-0.82);
  \draw[ed] (B3)--(W1)--(B2)--(L1);
  \draw[ed] (B3)--(W2)--(B1);
  \draw[ed] (B3)--(L2);
  \node[bl] at (B3) {}; \node[bl] at (B2) {}; \node[bl] at (B1) {};
  \node[wh] at (W1) {}; \node[wh] at (W2) {};
  \node[wh] at (L1) {}; \node[wh] at (L2) {};
  \node[lbl,below=3pt] at (B3) {$3$};
  \node[lbl,below=3pt] at (B2) {$2$};
  \node[lbl,above=2pt] at (B1) {$1$};
  \node[lbl] at (1.2,-1.5) {$T_{1}$};
\end{scope}
\begin{scope}[xshift=5.5cm]
  \coordinate (B3) at (0,0);
  \coordinate (Wa) at (1.15,0);
  \coordinate (B2) at (2.3,0);
  \coordinate (Wb) at (3.45,0);
  \coordinate (B1) at (4.6,0);
  \coordinate (L1) at (-0.82,0.82);
  \coordinate (L2) at (-0.82,-0.82);
  \draw[ed] (B3)--(Wa)--(B2)--(Wb)--(B1);
  \draw[ed] (B3)--(L1); \draw[ed] (B3)--(L2);
  \node[bl] at (B3) {}; \node[bl] at (B2) {}; \node[bl] at (B1) {};
  \node[wh] at (Wa) {}; \node[wh] at (Wb) {};
  \node[wh] at (L1) {}; \node[wh] at (L2) {};
  \node[lbl,below=3pt] at (B3) {$3$};
  \node[lbl,below=3pt] at (B2) {$2$};
  \node[lbl,below=3pt] at (B1) {$1$};
  \node[lbl] at (1.9,-1.5) {$T_{2}$};
\end{scope}
\begin{scope}[xshift=11.6cm]
  \coordinate (B3) at (0,0);
  \coordinate (W1) at (1.15,0);
  \coordinate (B2) at (2.3,0);
  \coordinate (L1) at (3.45,0);
  \coordinate (W2) at (-0.82,-0.82);
  \coordinate (B1) at (-1.64,-1.64);
  \coordinate (L2) at (-0.82,0.82);
  \draw[ed] (B3)--(W1)--(B2)--(L1);
  \draw[ed] (B3)--(W2)--(B1);
  \draw[ed] (B3)--(L2);
  \node[bl] at (B3) {}; \node[bl] at (B2) {}; \node[bl] at (B1) {};
  \node[wh] at (W1) {}; \node[wh] at (W2) {};
  \node[wh] at (L1) {}; \node[wh] at (L2) {};
  \node[lbl,below=3pt] at (B3) {$3$};
  \node[lbl,below=3pt] at (B2) {$2$};
  \node[lbl,below=2pt] at (B1) {$1$};
  \node[lbl] at (1.2,-1.5) {$T_{3}$};
\end{scope}
\end{tikzpicture}
\caption{The three plane trees of the passport
$\langle(3,2,1),(2,2,1,1),(6)\rangle$, permuted transitively by $\GQ$; the tree attached
to the root $a_{i}$ of $25a^{3}-12a^{2}-24a-16$ has field of moduli the embedded cubic
field $\Q(a_{i})$, the three of which are the conjugate embeddings of the abstract
non-Galois field $\Q(\sqrt[3]{2})$ (Theorem~\ref{thm:cubic-orbit}). Numbers are
black-vertex degrees. As
abstract graphs $T_{1}$ and $T_{3}$ coincide; they differ as \emph{plane} trees, by the
cyclic order at the degree-$3$ vertex---they are mirror images. $T_{2}$ is a different
abstract tree. No purely graph-theoretic invariant of the underlying abstract trees can
therefore separate this orbit; the planar structure is essential.}
\label{fig:three-trees}
\end{figure}

\subsection{The neighbouring quadratic orbit}\label{ssec:quadratic-orbit}

The discarded factor of \eqref{eq:resultant} is not a degeneration but a second, smaller
orbit.

\begin{theorem}\label{thm:quadratic-orbit}
The passport $\bigl\langle(3,2,1),(3,1,1,1),(6)\bigr\rangle$ contains exactly two plane
trees. They form a single $\GQ$-orbit; the field of moduli of each is the imaginary
quadratic field $\Q(i)$---here a genuinely common embedded field, since $\Q(i)/\Q$ is
Galois and hence stable under conjugation of stabilizers. In the normalization
\eqref{eq:p-normalized} they are given by the two roots
\begin{equation*}
  a=\frac{16\pm 12\,i}{25}
\end{equation*}
of the quadratic factor $25a^{2}-32a+16$ of \eqref{eq:resultant}.
\end{theorem}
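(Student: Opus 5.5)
The plan mirrors the proof of Theorem~\ref{thm:cubic-orbit}, with the roles of the two factors of the resultant \eqref{eq:resultant} exchanged. We again normalize a tree of the passport $\langle(3,2,1),(3,1,1,1),(6)\rangle$ as $p=c\,z^{3}(z-1)^{2}(z-a)$. Now the single white vertex of degree $3$ must be a double root of the quadratic factor $6z^{2}-(4+5a)z+3a$ of $p'$ in \eqref{eq:pprime}. So the Belyi conditions become: the discriminant $(4+5a)^{2}-72a$ vanishes, and $p(u)=1$ at the double root $u=(4+5a)/12$.

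This is the degenerate case $u=v$ of the system \eqref{eq:belyi-conditions}. Hence every solution is a root of $R$ on which the discriminant vanishes. By Lemma~\ref{lem:orbit-bijection}(iii), these are exactly the roots of $25a^{2}-32a+16$, namely $a=(16\pm12i)/25$. Conversely, for each such root we will do the following:
\begin{enumerate}[label=\textup{(\alph*)},leftmargin=2.4em]
\item solve \eqref{eq:E1} for $c(a)\in\Q(a)=\Q(i)$, using Lemma~\ref{lem:orbit-bijection}(i);
\item check that $p$ has three distinct black vertices, using Lemma~\ref{lem:orbit-bijection}(ii);
\item check that its critical values are $\{0,1\}$ with white degrees $(3,1,1,1)$.
\end{enumerate}
This yields exactly two Belyi polynomials, both defined over $\Q(i)$. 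One small point to verify: \eqref{eq:E1}--\eqref{eq:E2} encode $p(u)+p(v)=2$ and $p(u)p(v)=1$, and with $u=v$ these reduce to $p(u)=1$, so nothing is lost.

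Next we match the algebra to the combinatorics. By Lemma~\ref{lem:orbit-bijection}(iv), distinct $(a,c)$ give non-isomorphic trees, and every tree of the passport arises from some root. So the number of trees equals the number of roots, which is $2$. As an independent check, we enumerate transitive triples in $\Sym_{6}$ of cycle types $(3,2,1)$, $(3,1,1,1)$, $(6)$ up to conjugacy, via Theorem~\ref{thm:classification}, and expect exactly two classes. These can also be seen directly as plane trees: the degree-$3$ white vertex is adjacent to all three black vertices, and the two trees differ by the cyclic order at the degree-$3$ black vertex and the degree-$3$ white vertex (mirror images).

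For the Galois statement, $25a^{2}-32a+16$ is irreducible over $\Q$ because its discriminant $32^{2}-1600=-576$ is negative. Hence $\GQ$ swaps its roots by complex conjugation and acts compatibly on the trees. So the orbit is the whole passport, of size $2$. Proposition~\ref{prop:galois-passport} gives $[M(T):\Q]=2$, and since $M(T)\subseteq\Q(a)=\Q(i)$, equality holds. Because $\Q(i)/\Q$ is Galois, $\sigma(M(T))=M(T)$, so both trees have the same field of moduli, in contrast to Theorem~\ref{thm:cubic-orbit}.

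The main obstacle is the certification that the roots of the quadratic factor give genuine Belyi maps with exactly the claimed degrees. This needs $p(u)=1$ at the double root, with no further coincidence such as $u\in\{0,1,a\}$, and the remaining two roots of $p-1$ simple. I would try to settle this by an explicit evaluation at $a=(16+12i)/25$ first, then conclude for the conjugate root by applying complex conjugation.
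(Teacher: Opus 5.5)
Your proposal is correct and follows essentially the paper's route. The quadratic factor of $R$ is the collision locus $u=v$; in fact $(4+5a)^{2}-72a$ equals $25a^{2}-32a+16$ on the nose, and the double root $u=(3\pm i)/5$ avoids $0,1,a$, which settles your ``main obstacle''. Irreducibility then gives a single $\GQ$-orbit with $M(T)=\Q(i)$, and the enumeration in $\Sym_{6}$ confirms the count. Your necessity argument (every tree of the passport forces the discriminant to vanish) usefully makes ``exactly two'' an algebraic consequence rather than resting on the enumeration alone.

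One side remark should be corrected. The two trees differ only in the cyclic order of the three black neighbours around the white vertex of degree $3$. The cyclic order at the black vertex of degree $3$ carries no information, because its two pendant edges are interchangeable.
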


\begin{proof}
As shown in the proof of Theorem~\ref{thm:cubic-orbit}, at these values of $a$ the two
critical points $u,v$ of \eqref{eq:pprime} collide, so $p-1$ acquires a triple root and
the white degrees become $(3,1,1,1)$: the passport is the stated one, with
$3+4=7$ vertices and $6$ edges, again a tree of genus $0$. The quadratic
$25a^{2}-32a+16$ is irreducible over $\Q$ (discriminant $32^{2}-4\cdot25\cdot16=-576<0$),
so its two roots are $\GQ$-conjugate and generate $\Q(i)$, since
$a=\frac{16+12i}{25}$ gives $i=\frac{25a-16}{12}$. The combinatorial count agrees:
enumerating transitive triples of cycle types $(3,2,1)$, $(3,1,1,1)$, $(6)$ in $\Sym_{6}$
up to simultaneous conjugation yields exactly two classes.
\end{proof}

\begin{figure}[t]
\centering
\begin{tikzpicture}[scale=0.95,
  bl/.style={circle,fill=black,inner sep=2.2pt},
  wh/.style={circle,draw=black,fill=white,thick,inner sep=1.8pt},
  ed/.style={thick},
  lbl/.style={font=\scriptsize}]
\begin{scope}[xshift=0cm]
  \coordinate (W3) at (0,0);
  \coordinate (B3) at (-1.2,0);
  \coordinate (B2) at (1.2,0);
  \coordinate (B1) at (0,1.2);
  \coordinate (LA) at (-1.2,0)  ; 
  \draw[ed] (B3)--(W3)--(B2);
  \draw[ed] (W3)--(B1);
  \draw[ed] (B3)--++(150:1.05); \node[wh] at ($(B3)+(150:1.05)$) {};
  \draw[ed] (B3)--++(210:1.05); \node[wh] at ($(B3)+(210:1.05)$) {};
  \draw[ed] (B2)--++(0:1.05);   \node[wh] at ($(B2)+(0:1.05)$) {};
  \node[bl] at (B3) {}; \node[bl] at (B2) {}; \node[bl] at (B1) {};
  \node[wh] at (W3) {};
  \node[lbl,below=3pt] at (B3) {$3$};
  \node[lbl,below=3pt] at (B2) {$2$};
  \node[lbl,above=2pt] at (B1) {$1$};
  \node[lbl,below=3pt] at (W3) {$w$: deg $3$};
  \node[lbl] at (0.2,-1.5) {$T'_{1}$:\ $a=\tfrac{16+12i}{25}$};
\end{scope}
\begin{scope}[xshift=6.8cm]
  \coordinate (W3) at (0,0);
  \coordinate (B3) at (-1.2,0);
  \coordinate (B2) at (1.2,0);
  \coordinate (B1) at (0,-1.2);
  \draw[ed] (B3)--(W3)--(B2);
  \draw[ed] (W3)--(B1);
  \draw[ed] (B3)--++(150:1.05); \node[wh] at ($(B3)+(150:1.05)$) {};
  \draw[ed] (B3)--++(210:1.05); \node[wh] at ($(B3)+(210:1.05)$) {};
  \draw[ed] (B2)--++(0:1.05);   \node[wh] at ($(B2)+(0:1.05)$) {};
  \node[bl] at (B3) {}; \node[bl] at (B2) {}; \node[bl] at (B1) {};
  \node[wh] at (W3) {};
  \node[lbl,below=3pt] at (B3) {$3$};
  \node[lbl,below=3pt] at (B2) {$2$};
  \node[lbl,below=2pt] at (B1) {$1$};
  \node[lbl,above=3pt] at (W3) {$w$: deg $3$};
  \node[lbl] at (0.2,-2.0) {$T'_{2}$:\ $a=\tfrac{16-12i}{25}$};
\end{scope}
\end{tikzpicture}
\caption{The two plane trees of the passport
$\langle(3,2,1),(3,1,1,1),(6)\rangle$ (Theorem~\ref{thm:quadratic-orbit}): a white vertex
of degree $3$ joins all three black vertices, and the remaining three edges are pendants.
The two trees are mirror images---they differ by the cyclic order at the white
vertex---and complex conjugation, which reverses orientation, exchanges them; their
fields of moduli are the two conjugate embeddings $\Q(i)$ and $\Q(-i)$ of the Gaussian
field, which as subfields of $\C$ coincide: here, in contrast with the cubic orbit, the
abstract field is Galois and the embedded field really is common to the orbit.}
\label{fig:two-trees}
\end{figure}

\begin{remark}[What the computation illustrates]\label{rem:orbit-lessons}
Three features deserve emphasis.
\begin{enumerate}[label=\textup{(\roman*)},leftmargin=2.4em]
\item \emph{Fields of moduli of dessins are genuinely arithmetic.} The trees of
Theorem~\ref{thm:cubic-orbit} have fields of moduli the conjugate embeddings of
$\Q(\sqrt[3]{2})$, a non-Galois extension---the very example used in
\S\ref{ssec:fields} to show that the Galois correspondence can fail. A $\GQ$-set of size
$3$ whose point stabilizers are non-normal open subgroups is exactly what a non-Galois
cubic field looks like, and here it is realized by three drawings; the non-normality is
reflected in the fact that the three stabilizers, hence the three embedded fields, are
distinct.
\item \emph{The passport is a coarse invariant, but not a complete one.} Here it happens
to be complete---each of our two passports is a single orbit---but the two orbits sit
inside a single resultant \eqref{eq:resultant}, and are separated only by whether two
critical points collide. Passports that split into several orbits exist and are the
source of the search for finer invariants.
\item \emph{Planarity matters.} $T_{1}$ and $T_{3}$ are isomorphic as abstract trees.
Any invariant computed from the abstract graph---degree sequence, diameter, spectrum---is
blind to the distinction, yet $\GQ$ moves one to the other. Dessins are embedded graphs,
and the embedding is where the arithmetic lives.
\end{enumerate}
\end{remark}
\section{Synthesis and open problems}\label{sec:outlook}

\subsection{What the proof is made of}\label{ssec:synthesis}

It is worth stating plainly which ingredients carry which half of
Theorem~\ref{thm:belyi-intro}, because they are so different in kind.

The implication \emph{three branch points $\Rightarrow$ arithmetic} rests on exactly two
pillars. The first is topological: $\pi_{1}$ of the thrice-punctured sphere is free of
rank $2$, hence finitely generated, hence has finitely many subgroups of each finite index
(Proposition~\ref{prop:finiteness}). The second is Galois-theoretic: a subgroup of
$\Aut(\C)$ of finite index in $\Aut(\C/K)$ has fixed field a finite extension of $K$
(Lemma~\ref{lem:index}). Everything else---the field of moduli, the Riemann--Roch
normalization of Theorem~\ref{thm:moduli-covering}, the semilinear descent of
Lemma~\ref{lem:galois-descent}---is plumbing between the two, needed because the field of
moduli is only \emph{a priori} a field of moduli and not a field of definition.

The converse implication rests on nothing but the chain rule
\eqref{eq:branch-composition} and the arithmetic of the two families of maps: minimal
polynomials in Stage~1, the Belyi polynomials $P_{m,n}$ in Stage~2. It is this asymmetry
that makes the converse the constructive half and the source of every explicit example in
Sections~\ref{sec:belyi-poly}--\ref{sec:galois-dessins}. The two halves also feed back
into each other: the descent theory, applied to the classification of extremal dessins,
produced the rationality statement of Corollary~\ref{cor:A5-rational}, and the character
theory of $\Sym_{d}$, applied to the equality case of an elementary inequality, produced
the mass formula of Theorem~\ref{thm:mass-formula}.

\begin{figure}[t]
\centering
\begin{tikzpicture}[
  box/.style={draw,rounded corners=2pt,align=center,font=\scriptsize,
              inner sep=3.5pt,minimum height=8mm},
  arr/.style={-{Stealth[length=4.5pt]},thick,gray!70!black}]
\node[box,fill=blue!6] (PI) at (0,0)
   {$\pi_{1}(\Pp^{1}\!\smallsetminus\!\{0,1,\infty\})$ free of rank $2$\\ \eqref{eq:pi1}};
\node[box,fill=blue!6] (AUT) at (7.4,0)
   {$\Aut(\C)$: finite index\\ $\Rightarrow$ finite extension (Lem.~\ref{lem:index})};
\node[box] (CLASS) at (0,-1.5)
   {classification by triples\\ (Thm.~\ref{thm:classification})};
\node[box] (FIN) at (3.7,-2.6)
   {finiteness\\ (Prop.~\ref{prop:finiteness})};
\node[box] (DESC) at (7.4,-1.5)
   {semilinear descent\\ (Lem.~\ref{lem:galois-descent}, Thm.~\ref{thm:weil})};
\node[box] (MOD) at (7.4,-3.1)
   {field of moduli\\ (Thms.~\ref{thm:rigid-descent}, \ref{thm:moduli-covering})};
\node[box,fill=red!7] (BELYI) at (3.7,-4.3)
   {\textbf{Belyi's theorem} (Thm.~\ref{thm:belyi-intro})};
\node[box,fill=green!8] (ALGO) at (0,-3.1)
   {Belyi's algorithm\\ $p$, then $P_{m,n}$ (\S\ref{ssec:algorithm})};
\node[box,fill=green!8] (PMN) at (0,-4.6)
   {structure of $P_{m,n}$\\ (Thms.~\ref{thm:pmn-decomp}, \ref{thm:pmn-monodromy})};
\node[box] (DEG) at (7.4,-4.6)
   {degree bounds \& mass formula\\ (Thms.~\ref{thm:degree-bound}, \ref{thm:mass-formula})};
\node[box] (ORB) at (3.7,-5.7)
   {Galois orbits of dessins\\ (Thms.~\ref{thm:cubic-orbit}, \ref{thm:quadratic-orbit})};
\draw[arr] (PI) -- (CLASS);
\draw[arr] (CLASS) -- (FIN);
\draw[arr] (AUT) -- (DESC);
\draw[arr] (DESC) -- (MOD);
\draw[arr] (FIN) -- (BELYI);
\draw[arr] (MOD) -- (BELYI);
\draw[arr] (ALGO) -- (BELYI);
\draw[arr] (ALGO) -- (PMN);
\draw[arr] (BELYI) -- (ORB);
\draw[arr] (CLASS) to[out=-180,in=180,looseness=1.1] (ORB.west);
\draw[arr] (BELYI) -- (DEG);
\end{tikzpicture}
\caption{The logical architecture. The left column is topological and constructive, the
right column arithmetic and descent-theoretic; Belyi's theorem is where they meet, and the
objects studied in Sections~\ref{sec:belyi-poly}--\ref{sec:galois-dessins} are what it
leaves behind.}
\label{fig:architecture}
\end{figure}

\subsection{Further perspectives}\label{ssec:perspectives}

Three directions in which the objects of this article open onto other areas deserve
mention, beyond the open problems below.

\emph{(1) Hurwitz theory and mathematical physics.} The mass formula
\eqref{eq:mass-formula} is, in the language of enumerative geometry, a Hurwitz number: it
counts degree-$d$ coverings of $\Pp^{1}$ with three prescribed total ramification points,
weighted by automorphisms---the genus-$g$, three-branch-point case of the counts that
appear in the Gromov--Witten theory of curves. Dessins themselves are the ribbon graphs
of matrix-model perturbation theory: the generating functions of maps on surfaces are
Gaussian matrix integrals, an identification developed at length in
\cite[Ch.~3]{LandoZvonkin2004}, and through it weighted dessin counts such as
\eqref{eq:mass-formula} sit inside the same circle of ideas as the Harer--Zagier and
topological-recursion formulas. It would be interesting to know whether the extremal
condition---all three partitions maximally degenerate---admits a natural matrix-model or
topological-recursion interpretation, and whether the exact evaluation
$2(d-1)!/d(d+1)$ is the shadow of a polynomiality statement for a family of triple
Hurwitz numbers.

\emph{(2) Computation of Belyi maps.} The certified elimination of
Section~\ref{sec:galois-dessins}---normalize, reduce to a resultant, prove that no
extraneous solutions arise, and match against an independent combinatorial count---is a
template that scales, in principle, to any passport, and it is exactly the kind of
groundwork the computational theory of Belyi maps rests on; see Sijsling--Voight
\cite{SijslingVoight2014} for the state of the art, where Gr\"obner, complex-analytic,
and $p$-adic methods are compared. The certification step (Lemma~\ref{lem:orbit-bijection})
is the part most often left implicit in the literature, and making it routine would turn
tables of Belyi maps into verified data.

\emph{(3) Regular maps and hypermaps.} A Belyi map is Galois precisely when its dessin is
a \emph{regular hypermap}---one whose automorphism group acts transitively on edges---and
the interplay between regularity, fields of moduli, and defining fields is a developed
subject in its own right \cite{JonesWolfart2016}. Corollary~\ref{cor:pmn-galois} locates
the unique regular member of the family $P_{m,n}$; the quotient structure
$P_{k,k}=\pi_{k}\circ P_{1,1}$ of Remark~\ref{rem:pmn-galois-factor} is the simplest
instance of the general machinery of hypermap quotients, and the wreath products of
Theorem~\ref{thm:pmn-monodromy} are exactly the monodromy groups that the covering-space
composition theory of \cite{AdrianovZvonkin1998} predicts should appear.

\subsection{Open problems}\label{ssec:problems}

\begin{problem}[Belyi degree of an explicit family]\label{prob:belyi-degree}
Theorem~\ref{thm:degree-bound} gives $\mathcal{B}(X)\geq2g+1$, attained by
$y^{2g+1}=x(1-x)$. Determine $\mathcal{B}(X)$ exactly for a family where it is
\emph{not} attained---for instance for the Fermat curves, where
Example~\ref{ex:fermat} gives $N^{2}-3N+3\leq\mathcal{B}(F_{N})\leq N^{2}$. Is
$\mathcal{B}(F_{N})=N^{2}$ for $N\geq4$? Even the case $N=4$ (genus $3$, so
$\mathcal{B}\geq7$ against the upper bound $16$) appears to be open.
\end{problem}

\begin{problem}[Extremal curves and orbits]\label{prob:extremal}
Theorem~\ref{thm:extremal-classification} classifies the extremal Belyi
\emph{coverings} for $g\leq3$ and Proposition~\ref{prop:cyclic-extremal} counts the cyclic
ones in prime degree. Two questions remain. First, identify the extremal \emph{curves}:
which genus-$g$ curve carries the non-cyclic degree-$5$ map with monodromy $A_{5}$, and do
the two degree-$7$ maps with monodromy $\mathrm{PSL}_{2}(\mathbb{F}_{7})$ live on the
Klein quartic---which would give $\mathcal{B}(\mathrm{Klein})=7$
(Remark~\ref{rem:extremal-curves})? Second, refine the mass formula: Theorem~\ref{thm:mass-formula} pins the weighted count
$M_{d}$ at $2(d-1)!/d(d+1)$, and Remark~\ref{rem:mass-asymptotics} bounds the number
$N_{d}$ of extremal dessins between $M_{d}$ and $dM_{d}$. Prove that
$N_{d}\sim M_{d}$---equivalently, that asymptotically almost all extremal dessins have
trivial automorphism group (the data for $d\leq7$ suggest this, the exceptions being
exactly the thin cyclic subfamily)---and determine the $\GQ$-orbit structure on the
$A_{d}$ majority, in particular whether orbits of unbounded size occur among extremal
dessins.
\end{problem}

\begin{problem}[Wreath monodromy beyond $P_{m,n}$]\label{prob:wreath}
Theorem~\ref{thm:pmn-monodromy} computes $\Mon(P_{m,n})=\Sym_{d'}\wr\Z/k$. The proof used
only that the dessin is a double star. Determine the monodromy group of the general
``multiple star'' $z^{m_{1}}(z-\alpha_{2})^{m_{2}}\cdots(z-\alpha_{r})^{m_{r}}$ normalized
to be Belyi: is it again the full symmetric group whenever
$\gcd(m_{1},\dots,m_{r})=1$, and a wreath product otherwise?
\end{problem}

\begin{problem}[Separating invariants]\label{prob:invariants}
In Theorem~\ref{thm:cubic-orbit} the passport determined the orbit, and the two trees
$T_{1},T_{3}$ of Figure~\ref{fig:three-trees} are distinguished only by their planar
embedding. Find a computable invariant of \emph{plane} trees, finer than the passport,
which is constant on $\GQ$-orbits and which separates the known examples of passports
containing more than one orbit.
\end{problem}
\appendix
\section{Documentation of the computations}\label{app:computations}

Every finite enumeration and every elimination used in the body of the article is
documented here, in a form that can be re-executed verbatim. All scripts are in Python
using SymPy (version $\geq1.12$); each runs in well under a minute on commodity hardware
except the degree-$7$ enumeration ($\sim$ minutes). Analogous code in GAP or Magma is a
direct transcription.

\subsection{Enumeration of dessins with prescribed passport}
\label{app:enumeration}

The algorithm implements Theorem~\ref{thm:classification} literally.

\begin{center}
\fbox{\parbox{0.92\textwidth}{\footnotesize
\textbf{Algorithm} (dessins with passport
$\langle\tau_{0},\tau_{1},\tau_{\infty}\rangle$ in degree $d$).
\begin{enumerate}[label=\arabic*.,leftmargin=1.8em,itemsep=1pt]
\item List all $\sigma_{0}\in\Sym_{d}$ of cycle type $\tau_{0}$ and all
$\sigma_{1}\in\Sym_{d}$ of type $\tau_{1}$.
\item Retain the pairs with $(\sigma_{0}\sigma_{1})^{-1}$ of type $\tau_{\infty}$ and
$\langle\sigma_{0},\sigma_{1}\rangle$ transitive.
\item Partition the surviving pairs into orbits under simultaneous conjugation by
$\Sym_{d}$; each orbit is one dessin. Record, per orbit, the order of
$\langle\sigma_{0},\sigma_{1}\rangle$ (the monodromy group) and
$d!/|\text{orbit}|$ (the automorphism group order, i.e.\ the centralizer).
\end{enumerate}}}
\end{center}

\begin{verbatim}
from sympy.combinatorics import Permutation, PermutationGroup
from sympy.combinatorics.named_groups import SymmetricGroup
from math import factorial

def ctype(p):
    return tuple(sorted((len(c) for c in p.full_cyclic_form), reverse=True))

def dessins(d, t0, t1, tinf):
    E = list(SymmetricGroup(d).elements)
    pairs = [(s0, s1) for s0 in E if ctype(s0) == t0
                      for s1 in E if ctype(s1) == t1
             if ctype((s0*s1)**-1) == tinf
             and PermutationGroup([s0, s1]).is_transitive()]
    seen, classes = set(), []
    for (s0, s1) in pairs:
        if (s0, s1) in seen: continue
        orbit = {(g*s0*g**-1, g*s1*g**-1) for g in E}
        G = PermutationGroup([s0, s1])
        classes.append((s0, s1, G.order(), factorial(d)//len(orbit)))
        seen |= orbit
    return classes
\end{verbatim}

\subsection{Outputs used in the text}\label{app:outputs}

\emph{Extremal passports} $\langle(d),(d),(d)\rangle$
(Theorem~\ref{thm:extremal-classification}). Running
\texttt{dessins(d,(d,),(d,),(d,))}:

\begin{center}\small
\begin{tabular}{@{}clll@{}}
\toprule
$d$ & representative $(\sigma_{0},\ \sigma_{1})$ & $\Mon$, $|\Mon|$ & $|\Aut|$ \\
\midrule
$3$ & $(1\,2\,3),\ (1\,2\,3)$ & $\Z/3$, $3$ & $3$\\
$5$ & $(1\,3\,2\,4\,5),\ (1\,3\,2\,4\,5)$ & $\Z/5$, $5$ \ [$3$ classes] & $5$\\
$5$ & $(1\,3\,2\,4\,5),\ (1\,2\,4\,3\,5)$ & $A_{5}$, $60$ \ [$1$ class] & $1$\\
$7$ & $(1\,7\,4\,5\,3\,2\,6),\ (1\,7\,4\,5\,3\,2\,6)$ & $\Z/7$, $7$ \ [$5$ classes] & $7$\\
$7$ & $(1\,7\,4\,5\,3\,2\,6),\ (1\,7\,3\,6\,4\,2\,5)$ &
  $\mathrm{PSL}_{2}(\mathbb{F}_{7})$, $168$ \ [$2$ classes] & $1$\\
$7$ & $(1\,7\,4\,5\,3\,2\,6),\ (1\,7\,2\,5\,6\,4\,3)$ & $A_{7}$, $2520$ \ [$23$ classes]
  & $1$\\
\bottomrule
\end{tabular}
\end{center}

Totals: $1,4,30$ classes in degrees $3,5,7$; total ordered pairs $2,192,129600$, matching
$|C|\cdot 2(d-1)!/(d+1)=(d-1)!\cdot2(d-1)!/(d+1)$ as in
Theorem~\ref{thm:mass-formula}; masses
$\sum1/|\Aut|=\tfrac13,\tfrac85,\tfrac{180}{7}$, matching $2(d-1)!/d(d+1)$. The group of
order $168$ is identified as $\mathrm{PSL}_{2}(\mathbb{F}_{7})$ by order and transitivity:
it is the unique conjugacy class of transitive subgroups of $\Sym_{7}$ of order $168$
(e.g.\ by \texttt{PermutationGroup.is\_primitive} plus the classification of transitive
groups of degree $7$, or in GAP via \texttt{TransitiveIdentification}).

\emph{Degree-six passports} (Theorems~\ref{thm:cubic-orbit},
\ref{thm:quadratic-orbit}). Running the algorithm with
$\tau_{\infty}=(6)$: the passport $\langle(3,2,1),(2,2,1,1)\rangle$ yields $3$ classes,
all with a common $\sigma_{0}=(1\,3\,6)(2\,4)$ and
$\sigma_{1}\in\{(1\,4)(3\,5),\,(2\,3)(4\,5),\,(1\,2)(5\,6)\}$; the passport
$\langle(3,2,1),(3,1,1,1)\rangle$ yields $2$ classes.

\subsection{The elimination of Section~\ref{sec:galois-dessins}}
\label{app:elimination}

The system \eqref{eq:E1}--\eqref{eq:E2} and its certification
(Lemma~\ref{lem:orbit-bijection}):

\begin{verbatim}
import sympy as sp
z, a, c = sp.symbols('z a c')
p  = c*z**3*(z-1)**2*(z-a)
u, v = sp.symbols('u v')            # roots of the quadratic factor of p'
s1 = sp.Rational(5,6)*a + sp.Rational(2,3)   # u+v
s2 = a/2                                     # uv
# symmetric reduction of p(u)+p(v)-2 and p(u)p(v)-1:
from sympy.polys.polyfuncs import symmetrize
E1s, r1, _ = symmetrize(sp.expand(p.subs(z,u)+p.subs(z,v)-2), [u,v], formal=True)
E2s, r2, _ = symmetrize(sp.expand(p.subs(z,u)*p.subs(z,v)-1), [u,v], formal=True)
assert r1 == 0 and r2 == 0
S1, S2 = sp.symbols('s1 s2')
E1 = sp.expand(46656*E1s.subs({S1:s1, S2:s2}))   # -> eq. (E1), linear in c
E2 = sp.expand(  432*E2s.subs({S1:s1, S2:s2}))   # -> eq. (E2)
R  = sp.factor(sp.resultant(E1, E2, c))
# R = -432*(25*a**2-32*a+16)**3*(25*a**3-12*a**2-24*a-16)**2
cub, quad = 25*a**3-12*a**2-24*a-16, 25*a**2-32*a+16
lead = sp.Poly(E1, c).all_coeffs()[0]            # coefficient of c
assert sp.gcd(sp.Poly(lead,a), sp.Poly(cub,a)) == 1     # Lemma (i)
assert sp.gcd(sp.Poly(lead,a), sp.Poly(quad,a)) == 1
assert cub.subs(a,0)*cub.subs(a,1)*quad.subs(a,0)*quad.subs(a,1) != 0   # (ii)
disc = sp.expand((4+5*a)**2 - 72*a)
assert sp.rem(disc, quad, a) == 0                        # (iii): collision
assert sp.gcd(sp.Poly(disc,a), sp.Poly(cub,a)) == 1      # (iii): no collision
a0 = (4 + 18*2**sp.Rational(1,3) + 6*2**sp.Rational(2,3))/25
assert sp.simplify(cub.subs(a, a0)) == 0
c0 = sp.radsimp(sp.solve(E1.subs(a, a0), c)[0])
crit = sp.solve(sp.diff(p.subs({a:a0, c:c0}), z), z)
assert {sp.nsimplify(sp.simplify(p.subs({a:a0,c:c0,z:r}))) for r in crit} == {0,1}
\end{verbatim}

The final assertion verifies that the explicit polynomial with
\eqref{eq:a-explicit}--\eqref{eq:c-explicit} has critical values exactly $\{0,1\}$; the
same loop over the complex roots (using \texttt{CRootOf}) verifies the remaining four
solutions.

\subsection{Verification of the mass formula and the character identity}
\label{app:mass}

The identity \eqref{eq:hook-sum} is checked for $n\leq10$ by
\begin{verbatim}
for n in range(2, 11, 2):
    s = sum(sp.Rational((-1)**r, sp.binomial(n, r)) for r in range(n+1))
    assert s == sp.Rational(2*(n+1), n+2)
\end{verbatim}
and the masses of \S\ref{app:outputs} match \eqref{eq:mass-formula} for $d=3,5,7$; for
$d$ even the enumeration returns no classes, matching the vanishing of
\eqref{eq:hook-sum}.

\subsection{Verification of the cyclic count}\label{app:cyclic}

The count \eqref{eq:cyclic-count} of Proposition~\ref{prop:cyclic-extremal} is verified by
brute force for $d\in\{3,5,7,9,15,21,25,27,45\}$:
\begin{verbatim}
from math import gcd
from sympy import factorint, totient
def brute(d):
    pairs = [(a,b) for a in range(1,d) for b in range(1,d)
             if gcd(a,d)==gcd(b,d)==gcd(a+b,d)==1]
    units = [u for u in range(1,d) if gcd(u,d)==1]
    seen, cl = set(), 0
    for p in pairs:
        if p in seen: continue
        cl += 1
        seen |= {((u*p[0])%d,(u*p[1])%d) for u in units}
    return cl
def formula(d):
    r = 1
    for p,k in factorint(d).items(): r *= p**(k-1)*(p-2)
    return r
for d in (3,5,7,9,15,21,25,27,45):
    assert brute(d) == formula(d)
\end{verbatim}
In each case the number of admissible pairs also equals $\varphi(d)$ times the class
count, confirming that the scaling action is free; and for $d=9$ a dessin-level
enumeration of cyclic triples $(\sigma_{0},\sigma_{1})=(c^{a},c^{b})$, $c$ a fixed
$9$-cycle, up to conjugation, independently returns $3$ classes with $|\Aut|=9$.

\end{document}